\documentclass[reqno,11pt]{amsart}
\usepackage{amsfonts, amsmath, amssymb, amsthm, color}
  \usepackage{amsfonts}
\usepackage{amssymb}
\usepackage{amsmath}
\usepackage{amsthm}
\usepackage{latexsym}
\usepackage{graphicx}
\usepackage{layout}
\usepackage{color}
\usepackage{geometry}
\usepackage{amsopn, enumitem}
\usepackage{palatino}

\usepackage{cite}

\allowdisplaybreaks[1]
   \setlength{\textwidth}{150mm}

\newtheorem{thm}{Theorem}[section]
\newtheorem{lemma}[thm]{Lemma}

\newtheorem{prop}[thm]{Proposition}
\newtheorem{rmk}[thm]{Remark}

\theoremstyle{definition}
 \numberwithin{equation}{section}

\newcommand{\rr}{\mathbb{R}}

\newcommand{\de}{\delta}
 \newcommand{\eps}{\varepsilon}
\newcommand{\la}{\lambda}
\newcommand{\D}{\Delta}
\newcommand{\R}{\mathbb{R}}

 \newcommand{\e }{\varepsilon }
   
   \newcommand{\into }{\int_{\Omega}}

\newcommand{\cal}{\mathcal }
   
   \newcommand{\N}{\mathbb{N}}
   \newcommand{\beq}{\begin{equation}}
   \newcommand{\eeq}{\end{equation}}
 
\def\bbm[#1]{\mbox{\boldmath $#1$}}
\newcommand{\bxi}{\boldsymbol\xi}

\DeclareMathOperator{\di}{d}

 \renewcommand{\(}{\left(}
\renewcommand{\)}{\right)}
\renewcommand{\[}{\left[}
\renewcommand{\]}{\right]}
 \oddsidemargin=0.2in

 \evensidemargin=0.2in
\baselineskip=16pt \addtolength{\textwidth}{1cm}

\addtolength{\textheight}{0.8cm}
\addtolength{\topmargin}{0.2cm}
 \begin{document}

\title[Blowing-up solutions for the $SU(3)$ Toda System]{A continuum of solutions for the $SU(3)$ Toda System\\ exhibiting partial blow-up}
\author{Teresa D'Aprile}
\address[Teresa D'Aprile] {Dipartimento di Matematica, Universit\`a di Roma ``Tor
Vergata", via della Ricerca Scientifica 1, 00133 Roma, Italy.}
\email{daprile@mat.uniroma2.it}

\author{Angela Pistoia}
\address[Angela Pistoia] {Dipartimento SBAI, Universit\`{a} di Roma ``La Sapienza", via Antonio Scarpa 16, 00161 Roma, Italy}
\email{pistoia@dmmm.uniroma1.it}

\author{David Ruiz}
\address[David Ruiz]{Departamento de An\'{a}lisis Matem\'{a}tico, Granada, 18071 Spain.}
\email{daruiz@ugr.es}
\thanks{The authors have been supported by the Gruppo Nazionale per l'Analisi Matematica, la Probabilit\`a e le lore application (GNAMPA) of the Istituto Nazionale di Alta Matematica (IndAM).}
\thanks{The first and the second authors have been supported by the Italian PRIN Research Project 2012  \textit{Aspetti variazionali e perturbativi nei problemi differenziali nonlineari}.}
\thanks{The third author has been supported by the Spanish Ministry of
Science and Innovation under Grant MTM2011-26717 and by J.
Andalucia (FQM 116).}

\begin{abstract} In this paper we consider the so-called Toda System in planar domains under Dirichlet boundary condition. We show the existence of continua of solutions for which one component is blowing up at a certain number of points. The proofs use singular perturbation methods.

\bigskip

\noindent {\bf Mathematics Subject Classification 2010:} 35J57, 35J61

\noindent {\bf Keywords:} Toda system, blowing-up solutions,
finite-dimensional reduction

\end{abstract}
\maketitle

\section{Introduction}

In this paper we consider the following version of the $SU(3)$ Toda System on a smooth bounded domain $\Omega \subset \R^2$:

\begin{equation}\label{eq:e-1}
  \left\{
    \begin{array}{lr}
      - \D u_1 = 2 \rho_1 \frac{e^{u_1}}{\int_{\Omega}e^{u_1}} - \rho_2 \frac{e^{u_2}}{\int_{\Omega}e^{u_2}}, & x \in \Omega\\
     - \D u_2 = 2 \rho_2 \frac{e^{u_2}}{\int_{\Omega}e^{u_2}} - \rho_1 \frac{e^{u_1}}{\int_{\Omega}e^{u_1}},  & x \in \Omega \\
      \ u_1(x)=u_2(x)=0, & x \in \partial \Omega.
    \end{array}
    \right. \end{equation}

Here $\rho_1$, $\rho_2$ are positive constants. This problem, and its counterpart posed on compact surfaces or $\R^2$,  has been very much studied in the literature. The Toda system has a
close relationship with geometry, since it can be seen as the
Frenet frame of holomorphic curves in $\mathbb{CP}^N$ (see
\cite{guest}). Moreover, it arises in the study of the non-abelian
Chern-Simons theory in the self-dual case, when a scalar Higgs field
is coupled to a gauge potential, see \cite{dunne, tar, yys}.

It can also be seen as a natural generalization to systems of equations of the classical mean field equation. With respect to the scalar case, the Toda System presents some analogies but also some different aspects, which has attracted the attention of a lot of mathematical research in recent years. Existence for the Toda system has been studied from a variational point of view in \cite{bjmr, cheikh, jw, mruiz}, whereas blowing-up solutions have been considered in (\cite{ao, lyan, lwzao-GAFA, mpwei, osuzuki}), for instance.

The blow up analysis for the solutions to \eqref{eq:e-1} was performed in \cite{jlw}; let us explain it in some detail. Assume that $u_n=(u^1_n, u^2_n)$ is a blowing-up sequence of solutions of \eqref{eq:e-1} with $\rho_n=(\rho_n^1, \rho_n^2)$ bounded. Then, there exists a finite blow-up set $S=\{p_1, \dots p_k\} \subset \Omega$ such that the solutions are bounded away from $S$. Concerning the points $p_i$, let us define the local masses:

$$ \sigma^i = \lim_{r \to 0} \lim_{n \to +\infty} \rho_n^i \frac{\int_{B(p,r)} e^{u_n^i}}{\int_{\Omega} e^{u_n^i}}.$$

Then, the following scenarios are possible:

\begin{enumerate}
\item[a)] Partial blow-up: $(\sigma^1,\ \sigma^2)=(4\pi, 0)$. In such case, only one component is blowing up, and its profile is related to the entire solution of the Liouville problem.

\item[b)] Asymmetric blow-up: $(\sigma^1,\ \sigma^2)=(4\pi, 8\pi)$. In this case, both components blow up but the profile of each one is related to different scalar limit problems.

\item[c)] Full blow-up: $(\sigma^1, \ \sigma^2)=(8\pi, 8\pi)$. In this case, the profile of the sequence is described by an entire solution of the Toda System in $\R^2$, which have been classified in \cite{jw2}.
\end{enumerate}

As a consequence of their study, in \cite{jlw} it is proved that the set of solutions is compact for any ${\rho} \in  (\R^+)^2 \setminus \mathcal{C}$, where

$$ \mathcal{C} = \left (4 \pi \N \times \R^+ \right ) \cup \left (\R^+ \times 4 \pi \N \right ).$$

Here we are denoting $\R^+=[0,+\infty)$. In other words, if blow-up occurs, at least one component $u_n^i$ is quantized, and $\rho_n^i \to 4 k \pi$ for some $k \in \N$. This implies that the Leray-Schauder degree (in a sufficiently large ball) is well-defined and constant for any $\rho$ in a connected component of $\R^2 \setminus \mathcal{C}$. The computation of that degree is by now a widely open problem; as far as we know, the only result on the degree for the Toda System is \cite{mruiz2}. This is in contrast with the scalar mean field equation:

\begin{equation}\label{mf}
  \left\{
    \begin{array}{lr}
      - \D u = 2 \rho \frac{e^{u}}{\int_{\Omega}e^{u}}, &  x \in \Omega\\
           \ u(x)=0, & x \in \partial \Omega.
    \end{array}
    \right. \end{equation}

It is well-known that the set of solutions for \eqref{mf} is compact for $\rho \in \R \setminus 4 \pi \N$ (\cite{breme, lisha}), and an explicit formula for the degree in all compact settings was found in \cite{clin}.

Regarding the existence of blowing-up solutions for the Toda System, we are aware only of the results \cite{ao, mpwei, lyan}, which concern partial blow-up, asymmetric blow-up and full blow-up, respectively. In those papers, $\rho_n$ converges to a single point of $\mathcal{C}$.

The starting point of this paper is the following observation: \emph{in the Toda System one expects the existence of continua of families of blowing-up solutions}. Indeed, if the Leray-Schauder degree of two adjacent squares of $\R^2 \setminus \mathcal{C}$ is different, then there must be blowing-up solutions for all points ${\rho}$ in the common side. To see that, take ${\hat{\rho}}$, ${\tilde{\rho}}$ a point in each square, and $\gamma$ any curve joining them. If the set of solutions for \eqref{eq:e-1} were bounded for any ${\rho} \in \gamma$, then the homotopy invariance of the degree would imply that the degrees of the Toda System for ${\hat{\rho}}$, ${\tilde{\rho}}$ should coincide. Then, there must be blowing-up solutions for ${\rho_n}$ converging to a point of $\gamma \cap \mathcal{C}$ (see Figure 1). And this intersection could be any point of the common side.

\begin{figure}[h]
\centering
\includegraphics[width=0.6\linewidth]{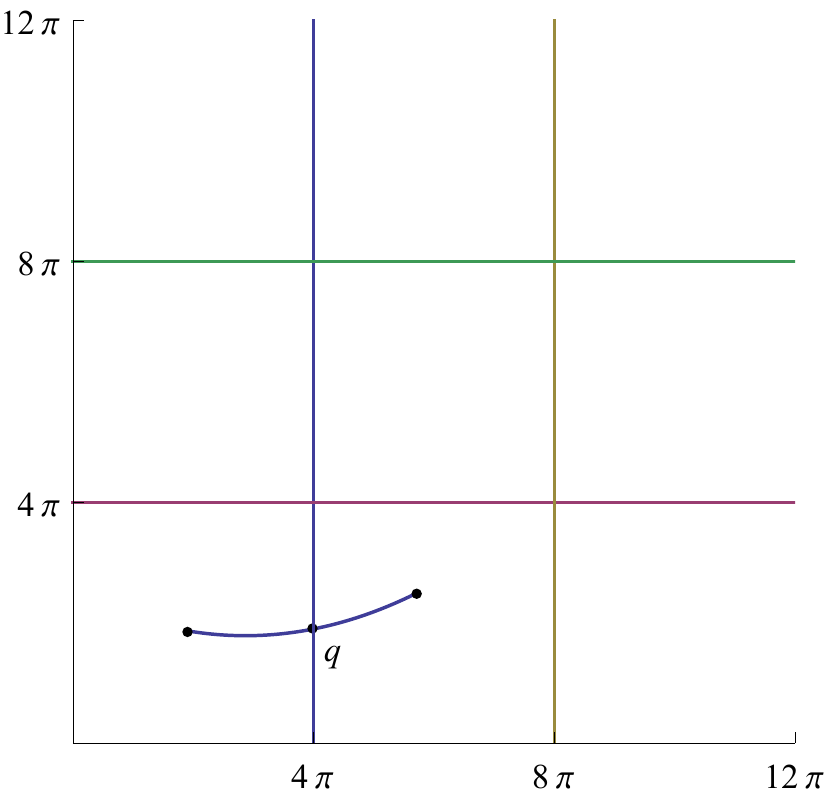}
\caption{There must be blowing-up solutions at the point $q$.}
\end{figure}

Let us show an easy example of such change of degree in adjacent squares. It is easy to show that the Leray-Schauder degree of \eqref{eq:e-1} is equal to $1$ if both $\rho_i \in [0, 4\pi)$ (see \cite{mruiz2}). Take now $\rho_2=0$; then, \eqref{eq:e-1} reduces to the classical mean field equation. If $\rho_1 \in (4 \pi, \ 8 \pi)$ and $\Omega$ is the disk, a classical Pohozaev-type argument shows that there is no solution. Therefore, the Leray-Schauder degree of \eqref{eq:e-1} is $0$ if $\Omega$ is the disk and ${\rho} \in (4\pi, 8 \pi) \times [0, 4\pi)$.

Since the degree does not depend on the metric, this change of degree remains for any simply connected domain $\Omega$. Then, for a simply connected domain \emph{there must be} blowing-up solutions $({u_n}, {\rho_n})$ with ${\rho_n}$ converging to any point of the segment $\{4 \pi \} \times (0,4\pi)$. One of the motivations of this paper is the study of the asymptotic behavior of those solutions.

\bigskip This paper is concerned with continua of solutions with a partial blow-up behavior. The case of asymmetric blow-up will be studied in a forthcoming paper. We do not expect the existence of continua of solutions exhibiting full-blow up.

More precisely, fixed $\rho_2 \in (0, 4\pi)$ and $k\in \N$, we look for blowing-up solutions $({u_n}, {\rho_n})$ with $\rho_n^1 \to 4 k \pi$, $\rho_n^2=\rho_2$.
Define:

$$ \mathcal F_k \Omega := \{ \bxi:=(\xi_1, \ \dots  \xi_k) \in \Omega^k:\ \xi_i \neq \xi_j \mbox{ if } i \neq j\}.$$

In order to describe the profile of our solutions, the following auxiliary problem appears quite naturally:

 \begin{equation}\label{ome}
\left \{ \begin{array}{lr}-\Delta z(x)  = 2 \rho_2 \frac{ h(x, \bxi) e^{z(x)}}{\int_{\Omega} h(y, \bxi) e^{z(y)}\, dy}   & x \in \ \Omega,\\ \ z(x)  =  0 & x \in \partial\Omega. \end{array} \right.
\end{equation}
with $\bxi \in \mathcal F_k \Omega$ and
 \begin{equation}\label{h}
h(x, \bxi)= \Pi_{i=1}^k |x-\xi_i|^2 e^{-4\pi H(x,\xi_i)}=e^{-4\pi \sum_{i=1}^k G(x,\xi_i)}.\end{equation}

Here $G$ is the Green function of the Laplace operator in $\Omega$ with Dirichlet boundary condition, and $H$ is its regular part. Equation \eqref{ome} is a singular mean field problem, where the function $h(\cdot, \bxi)$ vanishes exactly at the points $\xi_i$. This type of equation has been intensively studied in the literature; in this paper we will recall only the aspects which are needed in our analysis.

It is easy to show that equation \eqref{ome} is the Euler-Lagrange equation of the energy functional $I_{\bxi}: H_0^1(\Omega) \to \R$,

\begin{equation} \label{def I} I_{\bxi}(z)= \frac{1}{2} \int_{\Omega} |\nabla z|^2 \, dx - 2 \rho_2 \log \left ( \int_{\Omega} h(x,\bxi) e^{z(x)} \, dx \right).
\end{equation}
It is well-known that \eqref{ome} admits a solution for any $\rho_2 \in (0,4\pi)$. Moreover, if $\Omega$ is simply connected, the solution is unique and nondegenerate, see \cite{blin}. Then, the map $\bxi \mapsto z(\cdot,\bxi)$ is well-defined and smooth. Define:

\begin{equation} \label{Lambda} \Lambda(\bxi):= \frac{1}{2} I_{\bxi}(z(\cdot, \bxi)) -16 \pi^2 \left (\sum_{i=1}^k H(\xi_i,\xi_i) + \sum_{i \neq j} G(\xi_i, \xi_j) \right ). \end{equation}

Following \cite{li}, we shall say that a compact set $\cal{K} \in \mathcal F_k \Omega$ of critical points of $\Lambda$ is $C^1-$stable if, fixed a neighborhood $\cal{U}\supset \cal{K}$, any map $\Phi: \cal{U} \to \R$ sufficiently close to $\Lambda$ in $C^1$- sense has a critical point in $\cal{U}$.

Now we can state the main result of the paper:

\begin{thm} \label{teo} Let $k \in \N$, $\Omega$ a simply connected smooth domain, $\rho_2 \in (0,4\pi)$ and $\cal{K} \subset \mathcal F_k \Omega$ a $C^1$-stable set of critical points of $\Lambda$. Then there exists $\lambda_0>0$ such that, for any $\lambda \in (0,\lambda_0)$ there exist $(u_{\lambda}^1, \ u_{\lambda}^2)$ solutions of \eqref{eq:e-1} for $(\rho_{\lambda}^1, \ \rho_{\lambda}^2) $ satisfying that:

\begin{enumerate}

\item $\rho_{\lambda}^1 \to 4 \pi$ as $\lambda \to 0$, and $\rho_{\lambda}^2=\rho_2$.

\item There exists $\bxi=\bxi(\la) \in \cal{F}_k\Omega$, $\delta_i= \delta_i(\la) >0$ such that $d(\bxi, \cal{K})\to 0$, $\delta_i \to 0$ as $ \lambda \to 0$ and:

\begin{align*} u_{\lambda}^1(x) =- \frac 1 2 z(x,\bxi) + 2 \sum_{i=1}^k  \log \left ( \frac{1}{\delta_i^2 + |x-\xi_i|^2} \right ) + 8 \pi H(x,\xi) + o(1) \mbox{ in } H^1(\Omega) \mbox{ sense},
\\ u_{\lambda}^2(x)= z(x,\bxi) - \sum_{i=1}^k  \log \left ( \frac{1}{\delta_i^2 + |x-\xi_i|^2} \right )   -4 \pi H(x,\xi)  + o(1) \mbox{ in } H^1(\Omega) \mbox{ sense}. \end{align*}

\end{enumerate}

\end{thm}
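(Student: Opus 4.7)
My plan is to carry out a Lyapunov--Schmidt finite-dimensional reduction, the natural tool for this kind of problem. The first step is to make the ansatz of the theorem precise. For a concentration parameter $\delta>0$ and a point $\xi\in\Omega$, let
\begin{equation*}
U_{\delta,\xi}(x):=2\log\frac{1}{\delta^{2}+|x-\xi|^{2}},
\end{equation*}
and let $PU_{\delta,\xi}$ denote its projection onto $H^{1}_{0}(\Omega)$. Consider the approximate solution
\begin{equation*}
W^{1}_{\lambda,\bxi}:=-\tfrac12\,z(\cdot,\bxi)+\sum_{i=1}^{k}PU_{\delta_i,\xi_i},\qquad W^{2}_{\lambda,\bxi}:=z(\cdot,\bxi)-\tfrac12\sum_{i=1}^{k}PU_{\delta_i,\xi_i},
\end{equation*}
where the scales $\delta_i=\delta_i(\lambda,\bxi)>0$ are to be determined. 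Substituting into \eqref{eq:e-1} and matching the $L^{1}$-normalizations against the local mass $4\pi$ at each $\xi_i$, the leading-order balance forces $\log\delta_i$ to be an explicit combination of $\log\lambda$ (with $\lambda$ measuring the deviation of $\rho_\lambda^1$ from its limit value), $H(\xi_i,\xi_i)$, $G(\xi_i,\xi_j)$ for $j\neq i$, and $z(\xi_i,\bxi)$. With this choice, standard pointwise/$L^{q}$ estimates yield an error of size $o(1)$ in an appropriate weighted norm.

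The second step is the invertibility of the linearized Toda operator $L_\lambda$ at $W_\lambda$. Its approximate kernel is spanned, for each $i$, by the projections of $\partial_{\delta_i}U_{\delta_i,\xi_i}$ and $\partial_{\xi_i}U_{\delta_i,\xi_i}$, coupled appropriately between the two components. Combining the classical non-degeneracy of the Liouville bubble in $\R^{2}$ and the non-degeneracy of the singular mean field problem \eqref{ome} on simply connected domains supplied by \cite{blin}, I would prove that $L_\lambda$ is invertible, uniformly for $\bxi$ in a neighborhood of $\mathcal K$, on the $L^{2}$-orthogonal complement of its approximate kernel, with operator norm controlled by $|\log\lambda|$. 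A contraction-mapping argument in this orthogonal complement then produces a small correction $\phi_\lambda\in H^{1}_{0}(\Omega)^{2}$ such that $W_{\lambda,\bxi}+\phi_\lambda$ solves \eqref{eq:e-1} modulo an element of the approximate kernel.

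The third step is the reduction to a finite-dimensional variational problem. Denoting by $J_\lambda$ the functional associated to \eqref{eq:e-1} (the quadratic form in $(u_1,u_2)$ driven by the $SU(3)$ Cartan matrix minus the two logarithmic $\rho$-terms), it suffices to find critical points of the reduced functional $\tilde J_\lambda(\bxi):=J_\lambda(W_{\lambda,\bxi}+\phi_\lambda)$ on $\mathcal F_{k}\Omega$. Using the smallness of $\phi_\lambda$, the explicit form of $W_{\lambda,\bxi}$, and the variational characterization \eqref{def I} of $z(\cdot,\bxi)$, I would derive the $C^{1}$-expansion
\begin{equation*}
\tilde J_\lambda(\bxi)=c_\lambda+\Lambda(\bxi)+o(1)\qquad\text{in }C^{1}_{\mathrm{loc}}(\mathcal F_{k}\Omega),
\end{equation*}
where $c_\lambda$ depends only on $\lambda$. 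The $C^{1}$-stability of $\mathcal K$ then provides, for every sufficiently small $\lambda$, a critical point $\bxi(\lambda)$ of $\tilde J_\lambda$ with $d(\bxi(\lambda),\mathcal K)\to 0$; the corresponding $W_{\lambda,\bxi(\lambda)}+\phi_\lambda$ is the desired solution.

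The \emph{main obstacle} is the precise $C^{1}$-expansion of $\tilde J_\lambda$: unlike in scalar problems, the non-local term $z(\cdot,\bxi)$ enters both components of the ansatz, so its derivatives with respect to $\bxi$ couple back into the expansion at the same order as the Robin-type interactions. It is precisely the cancellation between $\tfrac12 I_{\bxi}(z(\cdot,\bxi))$ and the Green-function terms $H(\xi_i,\xi_i)$, $G(\xi_i,\xi_j)$ coming from the bubble projections that should produce the specific expression \eqref{Lambda} for the reduced functional.
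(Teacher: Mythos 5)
Your proposal is correct and follows essentially the same route as the paper: the same ansatz with $\delta_i$ determined by matching against $H$, $G$ and $z(\xi_i,\bxi)$, invertibility of the linearization with a $|\log\lambda|$ loss, a contraction argument, and a $C^1$-expansion $\tilde J_\lambda(\bxi)=c_\lambda+\Lambda(\bxi)+o(1)$ combined with $C^1$-stability of $\mathcal K$. The only cosmetic difference is in the approximate kernel: the paper imposes orthogonality only to the translation modes $PZ^1_i,PZ^2_i$ in the first component (no dilation mode, no constraint on $\phi_2$, since the $\delta_i$ are slaved to $\bxi$), and disposes of the $Z^0$-direction a posteriori inside the a priori estimate.
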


Quite remarkably, we will also show that:

\begin{equation} \label{nabla} \partial_{\xi_j} \Lambda(\bxi) = 2 \pi \frac{ \partial z}{\partial x}(\xi_j, \bxi) - 32 \pi^2 \left( \frac{\partial H}{\partial x}(\xi_j, \xi_j) + \sum_{i\neq j} \frac{\partial G}{\partial x}(\xi_i, \xi_j)\right).  \end{equation}
Therefore, the point of concentration $\bxi$ makes \eqref{nabla} equal $0$ for any $\xi_j$. Observe that Theorem \ref{teo} is coherent with the blow-up analysis carried out in \cite[Theorem 2]{osuzuki}.

\medskip

In particular, if $k=1$, the map $\Lambda$ takes the form:

$$ \Lambda(\xi)= \frac{1}{2} I_{\xi}(z(\cdot, \xi)) -16 \pi^2  H(\xi,\xi).$$

We point out that $\Lambda$ diverges positively at $\partial \Omega$, so that the set:

$$K= \{ \xi \in  \Omega: \Lambda(\xi) = \inf \Lambda\}$$

is always a $C^1$-stable set of critical points of $\Omega$.

Furthermore, our result can be extended to more general frameworks. The assumptions $\rho_2 \in (0,4\pi)$ and $\Omega$ simply connected are imposed only to assure existence, uniqueness and non-degeneracy of the solutions of $\eqref{ome}$. For instance, if $\rho_2$ is sufficiently small, we can consider general domains. This is important because, for non simply connected domains, the function $\Lambda$ always have a $C^1$-stable set of critical points. All this will be commented in detail in Section 2.

The proof uses singular perturbation methods. The main difficulty here is that we are prescribing $\rho_2$, namely, the global mass of the second component. Therefore, our analysis is local (around the blow-up points for the first component) and also global (for the second component). Indeed, the location of the point depends on $\rho_2$ in a nontrivial manner, via the function $I_{\bxi}(z(\cdot, \bxi))$. When  $\rho_2=0$ one recovers well-known results for the scalar mean field equation (\cite{baraket, dkm, egpistoia}).

The rest of the paper in organized as follows. Section 2 is devoted to some preliminary results, like the study of the auxiliary problem \eqref{ome}. Moreover, a more general version of Theorem \ref{teo} is given. In Section 3 the problem is settled, and the approximate solutions are defined. In Section 4 we are concerned with the finite dimensional reduction; in particular, the error term and the linearized problem are studied. In Section 5 we obtain $C^1$ estimates of the reduced functional.

\medskip

\section{Preliminaries. Statement of the main Theorem}

In this section we first set the notation and basic well-known facts which will be of use in the rest of the paper. Afterwards, we shall study an auxiliary singular problem, whose solution will be used in further sections to build our approximating solution for the Toda system. Finally, we will state our main result, from which Theorem \ref{teo} follows.

In our estimates, we will frequently denote by $C>0$, $c>0$ fixed
constants, that may change from line to line, but are always
independent of the variable under consideration. We also use the
notations $O(1)$, $o(1)$, $O(\lambda)$, $o(\lambda)$ to describe the
asymptotic behaviors of quantities in a standard way.

We agree that  \begin{equation}\label{green}
G(x,y)={1\over 2\pi}\ln{1\over |x-y|}+H(x,y),\quad x,y\in\Omega
\end{equation}
is the Green function of the Laplacian operator in $\Omega$ under homogeneous Dirichlet boundary condition, and $H(x,y)$ is its regular part. 
\\

We shall write  $\|u\|:=\(\int\limits_{\Omega}|\nabla u|^2 dx\)^{1/2}$ to denote the norm in $H^1_0(\Omega)$ and
$\|u\|_p:=\(\int\limits_{\Omega}| u|^p dx\)^{1/p}$ for the usual norm in $L^p(\Omega)$, $1\leq p \leq +\infty$.  If $u=(u_1,u_2)$ is a vector function, we set
$\|u\|=\|u_1\|+\|u_2\| $ and $\|u\|_p=\|u_1\|_p+\|u_2\|_p $. We also denote by $\langle u,v\rangle= \int\limits_{\Omega} \nabla u\nabla v dx$ the usual inner product in $H^1_0(\Omega)$.
\\

Let us define the Hilbert spaces
\begin{equation}\label{ljs}
\mathrm{L} (\rr^2):= \mathrm{L}^2\left (\R^2, \left ({1\over 1+|y|^2} \right)^2 \, dy\right), \end{equation}
 and denote by $\| u\|_L$ its norm. Moreover, we define:
\begin{equation}\label{hjs}\mathrm{H} (\rr^2):=\left\{u\in {\rm W}^{1,2}_{loc}(\rr^2) \cap \mathrm{L}(\R^2) :\ \|\nabla u\|_{\mathrm{L}^2(\rr^2)}<+\infty\right\},\end{equation} with the norm
$$ \|u\|_{\mathrm{H}}:= \(\|\nabla u\|^2_{\mathrm{L}^2(\rr^2)}+ \|u\|^2 _{\mathrm{L}} \)^{1/2}.$$

Observe that those spaces are nothing but $L^2(\mathbb{S}^2)$ and $H^1(\mathbb{S}^2)$ when $u$ is interpreted as a function from the round sphere via the stereographic projection. In particular, there holds

\begin{prop}\label{compact}
The embedding $\mathrm{H} (\rr^2)\hookrightarrow\mathrm{L} (\rr^2)$
is compact.
\end{prop}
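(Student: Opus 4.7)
The statement is already flagged by the paragraph preceding it: the spaces $\mathrm{H}(\mathbb{R}^2)$ and $\mathrm{L}(\mathbb{R}^2)$ are isometric copies of $H^1(\mathbb{S}^2)$ and $L^2(\mathbb{S}^2)$ via the stereographic projection. So my plan is to realize this isometry explicitly and then invoke the classical Rellich--Kondrachov theorem on the compact Riemannian manifold $\mathbb{S}^2$.

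Concretely, let $\pi:\mathbb{R}^2\to\mathbb{S}^2\setminus\{N\}$ be the inverse stereographic projection. A direct computation shows that the pull-back $\pi^*g_{\mathbb{S}^2}$ of the round metric is conformally Euclidean with conformal factor $4/(1+|y|^2)^2$, so the Riemannian area element satisfies $\pi^*\,dA_{\mathbb{S}^2}=\tfrac{4}{(1+|y|^2)^2}\,dy$. For $u\in \mathrm{H}(\mathbb{R}^2)$, define $Tu=u\circ\pi^{-1}$ on $\mathbb{S}^2\setminus\{N\}$. Then
\[
\|u\|_{\mathrm{L}}^{2}=\int_{\mathbb{R}^2}\frac{u^2}{(1+|y|^2)^2}\,dy=\frac{1}{4}\int_{\mathbb{S}^2}(Tu)^{2}\,dA_{\mathbb{S}^2},
\]
so $T$ is an isometry (up to the factor $1/4$) from $\mathrm{L}(\mathbb{R}^2)$ onto $L^{2}(\mathbb{S}^2)$; in particular, the value of $Tu$ at the single point $N$ is irrelevant. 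For the gradient part, I would use the conformal invariance of the Dirichlet integral in dimension $2$: if $g=\varphi\,g_{\mathrm{eucl}}$, then $|\nabla_{g}v|_{g}^{2}\,d\mathrm{vol}_{g}=|\nabla v|^{2}\,dy$, and hence
\[
\int_{\mathbb{R}^2}|\nabla u|^{2}\,dy=\int_{\mathbb{S}^2}|\nabla_{\mathbb{S}^2}(Tu)|^{2}\,dA_{\mathbb{S}^2}.
\]
Together these two identities say that $T$ extends to a Hilbert-space isomorphism $T:\mathrm{H}(\mathbb{R}^2)\to H^{1}(\mathbb{S}^2)$, which moreover intertwines the inclusions into $\mathrm{L}(\mathbb{R}^2)$ and $L^{2}(\mathbb{S}^2)$.

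Given this identification, the conclusion is immediate: Rellich--Kondrachov on the compact manifold $\mathbb{S}^2$ asserts that $H^{1}(\mathbb{S}^2)\hookrightarrow L^{2}(\mathbb{S}^2)$ compactly, and transporting this back by $T^{-1}$ gives the compactness of $\mathrm{H}(\mathbb{R}^2)\hookrightarrow\mathrm{L}(\mathbb{R}^2)$.

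The only mildly subtle point that I would have to check carefully is the surjectivity of $T$, i.e.\ that starting from $v\in H^{1}(\mathbb{S}^2)$ the function $u=v\circ\pi$ really lies in $\mathrm{H}(\mathbb{R}^2)$ (rather than just in some local Sobolev space on $\mathbb{R}^2$). The Euclidean $L^2$ integrability of $\nabla u$ and the weighted $L^2$ integrability of $u$ follow by reversing the two identities above, and the local regularity $u\in W^{1,2}_{\mathrm{loc}}(\mathbb{R}^2)$ follows because $\pi$ is a smooth diffeomorphism onto $\mathbb{S}^2\setminus\{N\}$. As an alternative to the stereographic argument, one could give a direct proof by combining a diagonal Rellich extraction on balls $B_R$ with a tail estimate coming from the decay $(1+|y|^2)^{-2}$; however, controlling the tail uniformly in the sequence is not entirely trivial (the weight does not decay strictly faster than a critical Hardy weight), and the sphere viewpoint side-steps this issue cleanly, which is why I would prefer it.
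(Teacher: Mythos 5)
Your argument is correct and is exactly the route the paper itself takes: the text merely observes that $\mathrm{H}(\rr^2)$ and $\mathrm{L}(\rr^2)$ are $H^1(\mathbb{S}^2)$ and $L^2(\mathbb{S}^2)$ under stereographic projection and then invokes the compact embedding on the sphere, and you have simply written out the conformal identities that justify this. The only detail worth adding is that $Tu$, a priori defined only on $\mathbb{S}^2\setminus\{N\}$, belongs to $H^1$ of the whole sphere because a point has zero $2$-capacity in dimension two, so $W^{1,2}(\mathbb{S}^2\setminus\{N\})=W^{1,2}(\mathbb{S}^2)$.
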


\medskip

We now state, for convenience of the reader, the well-known Moser-Trudinger inequality, see \cite{Moe,Tru}:
 \begin{lemma}\label{tmt} There exists $C>0$ such that for any bounded domain $\Omega$ in $\rr^2$
 $$\int\limits_\Omega e^{4\pi u^2/\|u\|^2}dx\le C |\Omega|,\ \hbox{for any}\ u\in{\rm H}^1_0(\Omega).$$
 In particular,  there exists $C>0$ such that for any $q\geq 1$,
 $$\| e^{u}\|_q\le  C |\Omega| e^{{q\over 16\pi}\|u\|^2},\
 \hbox{for any}\ u\in{\rm H}^1_0(\Omega).$$

The above inequality, for $q=1$, is usually written in the form:

\begin{equation} \label{onofri} \log \int_{\Omega} e^{u(x)}\, dx \leq \frac{1}{16 \pi} \|u\|^2 + \log |\Omega| + C. \end{equation}

 \end{lemma}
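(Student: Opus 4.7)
The plan follows a classical three-step strategy: reduce to radial functions by Schwarz symmetrization, transport the problem to the half-line by a logarithmic substitution, and then invoke a one-dimensional exponential integrability estimate.

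For the first step, given $u \in H_0^1(\Omega)$ with $\|u\| \ne 0$, let $u^{\ast}$ denote the symmetric decreasing rearrangement of $|u|$ on the ball $B_R \subset \R^2$ chosen so that $|B_R|=|\Omega|$. The P\'olya--Szeg\H{o} inequality yields $\|u^{\ast}\| \le \|u\|$, and by equimeasurability together with the monotonicity of $s\mapsto e^{4\pi s^2/\|u\|^2}$ on $[0,+\infty)$,
$$
\int_{\Omega} e^{4\pi u^2/\|u\|^2}\,dx \;=\; \int_{B_R} e^{4\pi (u^{\ast})^2/\|u\|^2}\,dx \;\le\; \int_{B_R} e^{4\pi (u^{\ast})^2/\|u^{\ast}\|^2}\,dx,
$$
so it suffices to prove the inequality for radial, nonnegative, nonincreasing $u\in H_0^1(B_R)$ with $u(R)=0$. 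For such $u$ I set $r = R e^{-t/2}$, $t\in[0,+\infty)$, let $w(t):=u(r(t))$ and normalize $\phi(t):=2\sqrt{\pi}\,w(t)/\|u\|$. A direct calculation using $u'(r) = -2w'(t)/r$ and $r\,dr = (R^2/2)e^{-t}\,dt$ gives
$$
\|u\|^2 = 4\pi\int_0^{\infty} (w'(t))^2\,dt, \qquad \int_{B_R} e^{4\pi u^2/\|u\|^2}\,dx = |\Omega|\int_0^{\infty} e^{\phi(t)^2 - t}\,dt,
$$
so that $\phi(0)=0$, $\phi$ is nondecreasing and $\int_0^{\infty}(\phi')^2\,dt = 1$. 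Thus the main inequality is equivalent to the one-dimensional estimate
$$
(\ast)\qquad \int_0^{\infty} e^{\phi(t)^2 - t}\,dt \;\le\; C
$$
with $C$ an absolute constant, whenever $\phi$ satisfies these three conditions.

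Estimate $(\ast)$ is the classical one-dimensional lemma of Moser, and it is the main obstacle. Note that Cauchy--Schwarz immediately gives the pointwise bound $\phi(t)\le\sqrt{t}$, so $e^{\phi^2-t}\le 1$; however, this is far from integrable on the infinite half-line. The sharp argument exploits the fact that if $\phi^2$ stays close to $t$ on a long initial interval $[0,t_0]$ then the inequality $\phi(t_0)^2 \le t_0\int_0^{t_0}(\phi')^2$ already exhausts essentially all of the unit $L^2$-budget of $\phi'$, so that $\phi$ must lag strictly behind $\sqrt{t}$ for $t > t_0$. Splitting $[0,+\infty)$ according to a threshold $\alpha\in(0,1)$, the region $\{\phi(t)^2 \le \alpha t\}$ contributes at most $\int_0^\infty e^{-(1-\alpha)t}\,dt$ to $(\ast)$, and on the complementary region one controls the superlevel sets $\{\phi^2 - t > -a\}$ via the residual $L^2$-budget and a Cavalieri integration. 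This is exactly what pins down the sharp constant $4\pi$. Once $(\ast)$ is established, the $L^q$-bound follows from Young's inequality
$$
qu \;\le\; \frac{q^2\|u\|^2}{16\pi} + \frac{4\pi u^2}{\|u\|^2},
$$
applied with $a = q\|u\|/2$, $b = u/\|u\|$ and weight $\e = 4\pi$; integrating and using the main inequality gives $\|e^u\|_q \le (C|\Omega|)^{1/q} e^{q\|u\|^2/(16\pi)}$, and specializing $q=1$ and taking logarithms produces \eqref{onofri}.
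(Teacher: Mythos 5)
The paper does not actually prove this lemma: it is quoted as the classical Moser--Trudinger inequality with references to \cite{Moe,Tru}, so any comparison is with Moser's original argument, which is exactly the route you follow. Your reduction steps are correct: the P\'olya--Szeg\H{o}/equimeasurability reduction to radial nonincreasing functions, the substitution $r=Re^{-t/2}$ with $\phi=2\sqrt{\pi}\,w/\|u\|$ giving $\phi(0)=0$, $\int_0^\infty(\phi')^2\,dt\le 1$ and $\int_{B_R}e^{4\pi u^2/\|u\|^2}dx=|\Omega|\int_0^\infty e^{\phi^2-t}\,dt$, and the Young-inequality deduction $qu\le \frac{q^2\|u\|^2}{16\pi}+\frac{4\pi u^2}{\|u\|^2}$ of the $L^q$ bound and of \eqref{onofri} are all fine (your constant is $(C|\Omega|)^{1/q}$, which gives the stated form whenever $C|\Omega|\ge 1$ and is in any case what is used at $q=1$).

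The gap is in the one-dimensional estimate $(\ast)$, which you yourself identify as the main obstacle but then only sketch, and the sketch as written does not close. With a fixed threshold $\alpha\in(0,1)$, let $t_0$ be the first time with $\phi(t_0)^2\ge\alpha t_0$; then indeed $\int_0^{t_0}(\phi')^2\ge\alpha$, but the only pointwise information the residual budget gives for $t>t_0$ is $\phi(t)\le\phi(t_0)+\big((1-\alpha)(t-t_0)\big)^{1/2}$ with $\phi(t_0)\le\sqrt{t_0}$, hence $\phi(t)^2-t\le 2\sqrt{(1-\alpha)t_0(t-t_0)}-\alpha(t-t_0)$, whose supremum in $t$ equals $\frac{1-\alpha}{\alpha}t_0$. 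The resulting bound for the contribution of the region $\{\phi^2>\alpha t\}$ therefore grows like $e^{\frac{1-\alpha}{\alpha}t_0}$ and is not uniform in $\phi$, since $t_0$ can be arbitrarily large. The whole content of Moser's lemma is precisely this uniformity, and it requires the finer argument you only allude to: a bound on the measure of the superlevel sets, say $|\{t:\phi(t)^2-t>-a\}|\le a+c\sqrt{a}+c$ uniformly in $\phi$, followed by the layer-cake integration $\int_0^\infty e^{\phi^2-t}dt\le\int_0^\infty e^{-a}\,d\big|\{\phi^2-t>-a\}\big|$. As it stands you should either carry out that distribution-function estimate in detail or cite Moser's one-dimensional lemma explicitly --- the latter being essentially what the paper does by quoting the full inequality from \cite{Moe,Tru}.
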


Let us now discuss some preliminary facts about the singular mean field problem \eqref{ome}. It is easy to show that equation \eqref{ome} is the Euler-Lagrange equation of the energy functional $I_{\bxi}$ defined in \eqref{def I}.
By \eqref{onofri}, $I_{\bxi}$ is coercive for all $\rho_2 \in (0,4\pi)$, and hence a solution to \eqref{ome} is found as a global minimizer.

In what follows we will assume the following hypothesis on problem \eqref{ome}:

\begin{enumerate}[label=(H), ref=(H)]

\item \label{H} There exists an open set $\cal{D}  \subset \mathcal F_k \Omega $ such that for any $\bxi \in \cal{D}$, problem \eqref{ome} is uniquely solvable and the solution is nondegenerate. In other words, if we denote by $z(x,\bxi)$ the unique solution of \eqref{ome}, then the linear problem
 $$\left \{ \begin{array}{l}-\Delta \psi(x)  = 2\rho_2 \left ( \displaystyle \frac{ h(x, \bxi) e^{z(x,\bxi)}\psi(x)}{\int_{\Omega} h(y, \bxi) e^{z(y,\bxi)}\, dy} - \frac{h(x, \bxi) e^{z(x,\bxi)}\int_{\Omega} h(y, \bxi) e^{z(y,\bxi)}\psi(y)\, dy}{\(\int_{\Omega} h(y, \bxi) e^{z(y,\bxi)}\, dy\)^2} \right )\ \ \hbox{ in}\ \Omega, \\ \ \psi(x)  =0 \ \ \ \hbox{ on}\ \partial\Omega \end{array} \right.$$
 admits only the trivial solution.
\end{enumerate}

Assumption \ref{H} seems very restrictive, but actually it holds for any simply connected domain $\Omega$, $\rho_2 \in (0,4\pi)$ and $\cal{D} = \mathcal F_k \Omega$, see \cite{blin}. Moreover, it holds also for any arbitrary domain $\Omega$ and $\rho_2$ sufficiently small, as shown in the following proposition.

\begin{prop} \label{piccolo}  Given $\cal{D} \subset \overline{D} \subset \mathcal F_k \Omega$ there exists $\delta>0$ such that if $\rho_2 \in (0, \delta)$, condition \ref{H} is satisfied.

\end{prop}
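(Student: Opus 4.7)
The plan is to combine a uniform implicit function theorem at $\rho_2=0$ with an a priori $H^1$-estimate on every solution of \eqref{ome}. First I would consider the $C^1$ map
$$F:\overline{\cal{D}}\times [0,+\infty)\times H^1_0(\Omega)\to H^1_0(\Omega),\qquad F(\bxi,\rho,z)=z-2\rho\,(-\Delta)^{-1}\!\left(\frac{h(\cdot,\bxi)\, e^{z}}{\int_\Omega h(\cdot,\bxi)\, e^{z}\,dy}\right),$$
whose smoothness follows from Lemma \ref{tmt} together with the fact that $h(\cdot,\bxi)$ depends smoothly on $\bxi$ in $L^\infty(\Omega)$, uniformly for $\bxi\in\overline{\cal{D}}$. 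Since $F(\bxi,0,z)\equiv z$, we have $D_zF(\bxi,0,0)=\mathrm{Id}$, and the compactness of $\overline{\cal{D}}$ makes the implicit function theorem applicable uniformly in $\bxi$: it delivers $\delta_1>0$, $r_0>0$ and a smooth family $(\bxi,\rho)\mapsto z(\cdot,\bxi,\rho)\in H^1_0(\Omega)$ with $\|z(\cdot,\bxi,\rho)\|\le r_0$, producing the unique solution of \eqref{ome} inside the ball $B_{r_0}(0)\subset H^1_0(\Omega)$; moreover the invertibility of $D_zF$ at this solution is exactly the nondegeneracy condition required in \ref{H}.

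The remaining task is to show that for $\rho_2$ small \emph{every} solution of \eqref{ome} lies inside $B_{r_0}(0)$, so that local uniqueness becomes global. For this I would apply a direct Brezis--Merle argument: the right-hand side of \eqref{ome} is nonnegative and $z=0$ on $\partial\Omega$, so the maximum principle yields $z\ge 0$, while $\|-\Delta z\|_{L^1(\Omega)}=2\rho_2$. The Brezis--Merle inequality \cite{breme} then gives, whenever $\rho_2<\pi$, an estimate $\int_\Omega e^{2z}\,dx\le C(\Omega)$. Combined with $\sup_{\bxi\in\overline{\cal{D}}}\|h(\cdot,\bxi)\|_\infty<+\infty$ and $\inf_{\bxi\in\overline{\cal{D}}}\int_\Omega h(\cdot,\bxi)\,dx>0$, both consequences of $\overline{\cal{D}}$ being a compact subset of $\mathcal F_k\Omega$, this yields $\|-\Delta z\|_{L^2(\Omega)}\le C\rho_2$; standard $L^2$-elliptic regularity then gives $\|z\|\le C'\rho_2$ uniformly in $\bxi\in\overline{\cal{D}}$.

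Setting $\delta:=\min\{\delta_1,\pi,r_0/C'\}$ concludes the argument: for any $\rho_2\in(0,\delta)$ and any $\bxi\in\cal{D}$ the a priori bound forces every solution of \eqref{ome} into $B_{r_0}(0)$, where it must coincide with the unique nondegenerate solution produced above, so \ref{H} holds. I expect the single delicate point to be this global a priori bound; the uniform lower bound on the denominator $\int_\Omega h(\cdot,\bxi)e^{z}\,dy$ relies crucially on both the sign $z\ge 0$ and the uniformity of $h$ on $\overline{\cal{D}}$, while the rest amounts to a parameter-dependent implicit function theorem.
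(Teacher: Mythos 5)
Your argument is correct, and it reaches the conclusion by a noticeably different route from the paper's. The paper also hinges on an implicit function theorem at $\rho=0$ (set up in the H\"older spaces $C^{2,\alpha}_0$ rather than in $H^1_0(\Omega)$), but the step that forces every solution into the range of local uniqueness is obtained by contradiction: given two solutions with $\rho_n\to 0$, the compactness result of \cite{btar} is invoked to extract a subsequence converging in $C^{2,\alpha}$ to the zero solution, after which the local uniqueness neighborhood captures both. You replace that citation by a direct quantitative a priori bound --- $z\ge 0$ by the maximum principle, $\|\Delta z\|_{L^1}=2\rho_2$, Brezis--Merle \cite{breme} to control $\int_\Omega e^{2z}$ for $\rho_2<\pi$, the uniform positive lower bound on $\int_\Omega h(\cdot,\bxi)\,dx$ over the compact set, and elliptic theory --- yielding $\|z\|\le C'\rho_2$ explicitly and uniformly in $\bxi$. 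This makes the proof self-contained and delivers the uniformity in $\bxi$ for free, where the paper's route is shorter on the page but outsources the compactness. You also treat nondegeneracy differently: for you it is automatic from the invertibility of $D_zF$ along the branch (which, as you note, is precisely the linearized problem in \ref{H}), whereas the paper runs a second contradiction argument, normalizing putative kernel elements by $\|\psi_n\|_\infty=1$ and showing the right-hand side of the linearized equation tends to $0$ uniformly. Both treatments are sound; the only point in your write-up that deserves an explicit line is the uniform applicability of the implicit function theorem over $\overline{\mathcal{D}}$, which follows either from a finite subcover or from observing that $D_zF(\bxi,\rho,z)-\mathrm{Id}$ has operator norm $O(\rho)$ uniformly on bounded sets of $H^1_0(\Omega)$ by Moser--Trudinger, so a contraction argument applies with radii independent of $\bxi$.
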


\begin{proof} We first prove uniqueness. Reasoning by contradiction, assume that there exists $\rho_n \to 0$, $\bxi_n \in D$, and $z^0_n$, $z^1_n$ two solutions for the problem:

 \begin{equation} \label{ome3}
 \begin{array}{l}-\Delta z(x)  = 2 \rho_n \frac{ h(x, \bxi_n) e^z(x)}{\int_{\Omega} h(y, \bxi_n) e^{z(y)}\, dy}   \ \mbox{in}\ \Omega,\\ \ z(x)  =  0\ \mbox{on}\ \partial\Omega, \end{array}
\end{equation}
with $h$ defined as in \eqref{h}.

By compactness of solutions (see \cite{btar}), we can pass to a subsequence so that $\xi_n \to \xi_0 \in \overline{\cal{D}}$, $z_n^i \to z^i$ in $C^{2,\alpha}$ sense. Passing to the limit in \eqref{ome3}, we conclude that $z^i =0$ for $i=0$, $1$.

Let us define:

$$ \Psi: C_0^{2,\alpha}({\Omega}) \times \mathcal F_k \Omega  \times \R \to C^{0,\alpha}({\Omega}),$$
\begin{equation}
 \Psi(z,\bxi,\rho)=-\Delta z(x) - 2 \rho \frac{ h(x, \bxi) e^z}{\int_{\Omega} h(x, \bxi) e^z\, dx},
\end{equation}
where $ C_0^{2,\alpha}({\Omega})$ is the space of H\"older $C^2$ functions that vanish at $\partial \Omega$.

Clearly $z$ is a solution of \eqref{ome} if and only if $\Psi(z, \bxi, \rho)=0$. Clearly, $\Psi(0,\bxi_0, 0)=0$, and moreover

$$D \Psi_{(0,\bxi_0,0)}(\phi, 0, 0)=- \Delta \phi,$$ which is an invertible operator from $ C_0^{2,\alpha}({\Omega})$  to $ C^{0,\alpha}({\Omega})$ . Therefore, we are under the conditions of the Implicit Function Theorem. There exist $r>0$, $\delta>0$, $U$ a neighborhood of $0$ in $C_0^{2,\alpha}({\Omega})$ and a $C^1$ map:
$$z: B(\bxi_0, r) \times (-\delta, \delta) \to C_0^{2,\alpha}({\Omega}),$$
$$ (\boldsymbol{\tilde{\xi}}, \tilde{\rho}) \mapsto z_{\boldsymbol{\tilde{\xi}}, \tilde{\rho}},$$
such that $\Psi( \boldsymbol{\tilde{\xi}}, \tilde{\rho}, z_{\boldsymbol{\tilde{\xi}}, \tilde{\rho}})=0$, and $z_{\boldsymbol{\tilde{\xi}}, \tilde{\rho}}$ is the unique function in $U_\xi$ satisfying that.

Now, if $n$ is large enough, both $z^i_n$ belong to $U$ and $\bxi_n \in B(\xi_0,r)$. Therefore, $z_n^0=z_n^1$.

\medskip

Let us now prove non-degeneracy. Again by contradiction, assume $\rho_n \to 0$, $\bxi_n \in D$, $z_n$ a solution of \eqref{ome3} and $\psi_n$ a nontrivial solution of the problem:

\begin{equation} \label{nd2} \begin{array}{l}-\Delta \psi_n(x)  = f_n(x)\ \ \hbox{ in}\ \Omega, \\ \ \psi(x)  =0 \ \ \ \hbox{ on}\ \partial\Omega \end{array} \end{equation}

with

$$f_n(x)= 2\rho_n \left ( \displaystyle \frac{ h(x, \bxi_n) e^{z_n(x)}\psi_n(x)}{\int_{\Omega} h(y, \bxi_n) e^{z_n(y)}\, dy} - \frac{h(x, \bxi_n) e^{z_n(x)}\int_{\Omega} h(y, \bxi_n) e^{z_n(y)}\psi_n(y)\, dy}{\(\int_{\Omega} h(y, \bxi_n) e^{z_n(y)}\, dy\)^2} \right ).$$

Since the above is a linear problem, we can normalize $\psi_n$ so that $\|\psi_n\|_{\infty}=1$. As above, we have that $z_n \to 0$ in $C^{2,\alpha}$ sense. Then, $\|f_n\|_{\infty} \to 0$, which is a contradiction with \eqref{nd2}.

\end{proof}

The following lemma will be useful in what follows:

\begin{lemma} \label{zeta} Under assumption \ref{H}, the map $\mathcal F_k \Omega \ni \bxi \mapsto z(\cdot,\bxi)$ is a $C^1$ map from $\cal{D}$ to $C^{2,\alpha}({\Omega})$. \end{lemma}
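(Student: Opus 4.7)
The plan is a direct application of the Implicit Function Theorem in H\"older spaces. Define
\begin{equation*}
\Psi: C_0^{2,\alpha}(\Omega)\times \mathcal F_k\Omega \longrightarrow C^{0,\alpha}(\Omega), \qquad
\Psi(z,\bxi)= -\Delta z - 2\rho_2 \frac{h(x,\bxi)e^{z(x)}}{\int_\Omega h(y,\bxi)e^{z(y)}\,dy},
\end{equation*}
where, as before, $C_0^{2,\alpha}(\Omega)$ consists of H\"older $C^2$ functions vanishing on $\partial\Omega$. The first observation is that $\Psi$ is of class $C^1$ jointly in $(z,\bxi)$. Indeed, although $h(x,\bxi)$ is written as a product $\prod |x-\xi_i|^2 e^{-4\pi H(x,\xi_i)}$, each factor $|x-\xi_i|^2$ is smooth in $(x,\xi_i)$, and the regular part $H(x,y)$ of the Green function is smooth away from the diagonal; consequently $h(\cdot,\bxi)\in C^{2,\alpha}(\overline\Omega)$ depends smoothly on $\bxi\in\mathcal F_k\Omega$, and the nonlinear Nemitski-type terms in $\Psi$ inherit the smoothness.

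Next I would compute the partial differential with respect to $z$ at any pair $(z(\cdot,\bxi),\bxi)$ with $\bxi\in\mathcal D$: for $\psi\in C_0^{2,\alpha}(\Omega)$,
\begin{equation*}
D_z\Psi\bigl(z(\cdot,\bxi),\bxi\bigr)[\psi] = -\Delta\psi - 2\rho_2\left(\frac{h(x,\bxi)e^{z(x,\bxi)}\psi(x)}{\int_\Omega h(y,\bxi)e^{z(y,\bxi)}\,dy} - \frac{h(x,\bxi)e^{z(x,\bxi)}\int_\Omega h(y,\bxi)e^{z(y,\bxi)}\psi(y)\,dy}{\bigl(\int_\Omega h(y,\bxi)e^{z(y,\bxi)}\,dy\bigr)^2}\right).
\end{equation*}
This is precisely the linear operator in hypothesis \ref{H}. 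It is a compact perturbation of $-\Delta: C_0^{2,\alpha}(\Omega)\to C^{0,\alpha}(\Omega)$ (which is an isomorphism by classical Schauder theory), hence Fredholm of index zero. By \ref{H} its kernel is trivial, so $D_z\Psi(z(\cdot,\bxi),\bxi)$ is a Banach space isomorphism.

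Applying the Implicit Function Theorem at an arbitrary $\bxi_0\in\mathcal D$, I obtain a neighborhood $V$ of $\bxi_0$ in $\mathcal F_k\Omega$, a neighborhood $U$ of $z(\cdot,\bxi_0)$ in $C_0^{2,\alpha}(\Omega)$, and a $C^1$ map $\bxi\mapsto \tilde z(\cdot,\bxi)$ from $V$ to $U$ which is the unique solution in $U$ of $\Psi(\tilde z,\bxi)=0$. To conclude that this locally defined map coincides with the globally defined $\bxi\mapsto z(\cdot,\bxi)$, I would invoke continuity: by the compactness of solutions to \eqref{ome} (cf. the argument in Proposition \ref{piccolo}), $\bxi\mapsto z(\cdot,\bxi)$ is continuous in $C^{2,\alpha}$, so for $\bxi$ close to $\bxi_0$ the unique solution $z(\cdot,\bxi)$ lies in the neighborhood $U$ where the IFT solution is unique, whence the two coincide and the $C^1$ regularity is inherited. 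The only delicate point is checking the joint smoothness of $h$ and of the resulting nonlinear term, but as noted this is straightforward since both factors of $h$ are $C^\infty$ in $(x,\bxi)$; no genuine analytic obstacle arises.
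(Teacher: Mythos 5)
Your proposal is correct and follows essentially the same route as the paper: the same map $\Psi$, the same Fredholm-plus-trivial-kernel argument via hypothesis \ref{H} to invert $D_z\Psi$, and the Implicit Function Theorem. The only difference is at the end, where you identify the IFT branch with the global map via a continuity/compactness argument; the paper gets this for free from the \emph{global} uniqueness of solutions of \eqref{ome} guaranteed by \ref{H}, so your extra step, while valid, is not needed.
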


\begin{proof} The argument follows the same ideas of the proof of Proposition \ref{piccolo}. Define again the map $\Psi$, but now without dependence on $\rho$, namely:

$$ \Psi: C_0^{2,\alpha}({\Omega}) \times \mathcal F_k \Omega  \to C^{0,\alpha}({\Omega}),$$
\begin{equation} \label{ome2}
 \Psi(z,\xi)=-\Delta z(x) - 2 \rho \frac{ h(x, \bxi) e^z}{\int_{\Omega} h(x, \bxi) e^z\, dx}.
\end{equation}

\medskip Clearly $z$ is a solution of \eqref{ome} if and only if $\Psi(z, \bxi, \rho)=0$.

Assume condition \ref{H}, fix some $\bxi_0 \in \cal{D}$ and take $z$ the unique solution to \eqref{ome}. Clearly, $\Psi(z,\bxi_0, \rho)=0$, and moreover:

$$D \Psi_{(z,\bxi_0)}(\psi, 0)=  -\Delta (\psi) - K(\psi),$$
where the operator $K:C_0^{2,\alpha} \to C^{0,\alpha}$ is defined as:

$$ K(\psi)=2\rho_2 \left (\displaystyle \frac{ h(x, \bxi) e^{z(x,\bxi)}\psi(x)}{\int_{\Omega} h(y, \bxi) e^{z(y,\bxi)}\, dy} - \frac{h(x, \bxi) e^{z(x,\bxi)}\int_{\Omega} h(y, \bxi) e^{z(y,\bxi)}\psi(y)\, dy}{\(\int_{\Omega} h(y, \bxi) e^{z(y,\bxi)}\, dy\)^2} \right ).$$

Observe now that $\Delta:C_0^{2,\alpha} \to C^{0,\alpha} $ is an isomorphism and $K$ is a compact operator. Therefore $- \Delta -K$ is a Fredholm operator with $0$ index. By assumption \ref{H} $D \Psi_{(z,\xi)}(\psi, 0)$ has zero kernel, and hence it is a bijection.

Therefore we are under the conditions of the Implicit Function Theorem, and
there exists $r>0$ and a $C^1$ map:
$$z: B(\bxi_0, r)  \to C_0^{2,\alpha}({\Omega}),$$
$$ \boldsymbol{\tilde{\xi}}\mapsto z_{\boldsymbol{\tilde{\xi}}},$$
such that $z(\bxi_0)=z$ and $\Psi( \boldsymbol{\tilde{\xi}}, z_{\boldsymbol{\tilde{\xi}}})=0$. This concludes the proof, since the solutions of \eqref{ome} are unique by assumption \ref{H}.

\end{proof}

\medskip

By the previous lemma, the map $\Lambda : \cal{D} \to \R$ defined as in \eqref{Lambda} is $C^1$. The main result of this paper is the following:

\begin{thm} \label{main} Assume that $\Omega$, $\rho_2>0$ satisfy assumption \ref{H}. Let $\cal{K} \in \cal{D}$ be a $C^1$-stable set of critical points of $\Lambda$. Then there exists $\lambda_0>0$ such that, for any $\lambda \in (0,\lambda_0)$ there exist $(u_{\lambda}^1, \ u_{\lambda}^2)$ solutions of \eqref{eq:e-1} for $(\rho_{\lambda}^1, \ \rho_{\lambda}^2) $ satisfying that:
\begin{enumerate}

\item $\rho_{\lambda}^1 \to 4 \pi$ as $\lambda \to 0$, and $\rho_{\lambda}^2=\rho_2$.

\item There exists $\bxi=\bxi(\la) \in \cal{F}_k\Omega$, $\delta_i= \delta_i(\la) >0$ such that $d(\bxi, \cal{K}) \to 0$, $\delta_i \to 0$ as $ \lambda \to 0$ and:

\begin{align*} u_{\lambda}^1(x) =- \frac 1 2 z(x,\bxi) + 2 \sum_{i=1}^k  \log \left ( \frac{1}{\delta_i^2 + |x-\xi_i|^2} \right ) + 8 \pi H(x,\xi) + o(1) \mbox{ in } H^1(\Omega) \mbox{ sense},
\\ u_{\lambda}^2(x)= z(x,\bxi) - \sum_{i=1}^k  \log \left ( \frac{1}{\delta_i^2 + |x-\xi_i|^2} \right )   -4 \pi H(x,\xi)  + o(1) \mbox{ in } H^1(\Omega) \mbox{ sense}. \end{align*}

\end{enumerate}

\end{thm}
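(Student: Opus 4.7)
The plan is to carry out a Lyapunov--Schmidt reduction adapted to the coupled system. First I would build a family of approximate solutions $(U_1^\lambda, U_2^\lambda)$ parametrised by $\bxi=(\xi_1,\dots,\xi_k)$ close to $\cal{K}$ and by concentration scales $\delta_i=\delta_i(\lambda,\bxi)>0$ with $\delta_i^2 \asymp \lambda$. Guided by the asymptotic profile in the statement, a natural choice is
$$ U_1^\lambda = \sum_{i=1}^k PW_{\delta_i,\xi_i} - \tfrac12 z(\cdot,\bxi), \qquad U_2^\lambda = z(\cdot,\bxi) - \tfrac12 \sum_{i=1}^k PW_{\delta_i,\xi_i}, $$
where $W_{\delta,\xi}(x)=\log\frac{8\delta^2}{(\delta^2+|x-\xi|^2)^2}$ is the standard Liouville bubble and $PW$ is its $H^1_0(\Omega)$-projection, so that $PW_{\delta,\xi}(x)=W_{\delta,\xi}(x)+8\pi H(x,\xi)-\log(8\delta^2)+O(\delta^2)$. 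The precise relation between $\delta_i$, $\lambda$ and $\bxi$ will be fixed so as to cancel the leading-order constant mismatch in the first equation, which in turn pins down $\rho_\lambda^1$ as a function of $\lambda$.

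Next I would estimate the error $R^\lambda=(S_1(U^\lambda),S_2(U^\lambda))$ in a weighted $L^p$ norm adapted to the bubble scale. The reason for the above ansatz is that, away from the $\xi_i$'s, $e^{U_2^\lambda}$ is comparable to $h(x,\bxi)^{-1/2}e^{z(x,\bxi)}$; combined with the shape of $U_1^\lambda$ this makes the second equation reduce, at leading order, to the singular mean field problem \eqref{ome}, while the first one picks up only the standard Liouville error, modulated by the smooth weight $h(x,\bxi)^{1/2}e^{-z/2}$ in front of each bubble. I would then study the linearised operator $L^\lambda$ at $U^\lambda$. Its approximate kernel is $3k$-dimensional and is generated by the translation and dilation modes $Z_i^0,Z_i^1,Z_i^2$ of each Liouville bubble in the first component; thanks to hypothesis \ref{H}, the second component contributes no additional kernel. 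A blow-up / contradiction argument in the spirit of the mean field literature should then yield uniform invertibility of $L^\lambda$ on the orthogonal complement of this kernel, with at worst a $|\log\lambda|$ loss.

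With the error estimate and the linear theory in hand, a contraction mapping on a small ball of the orthogonal complement produces, for each admissible $\bxi$, a unique small perturbation $\phi^\lambda(\bxi)=(\phi_1^\lambda,\phi_2^\lambda)$ making $U^\lambda+\phi^\lambda$ solve the projected problem; by the implicit function theorem and Lemma \ref{zeta}, the map $\bxi\mapsto\phi^\lambda(\bxi)$ is $C^1$. The original system is then equivalent to annihilating the Lagrange multipliers, which in turn is equivalent to finding a critical point of the reduced energy $\tilde J_\lambda(\bxi):=J_\lambda(U^\lambda+\phi^\lambda)$ associated to \eqref{eq:e-1}. A careful $\lambda$-expansion should yield, uniformly in $C^1(\cal{D})$,
$$ \tilde J_\lambda(\bxi) = a_\lambda + b_\lambda\,\Lambda(\bxi) + o_\lambda(1), $$
with $a_\lambda,b_\lambda$ independent of $\bxi$ and $b_\lambda$ not vanishing. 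The $\frac12 I_\bxi(z(\cdot,\bxi))$ piece of $\Lambda$ arises precisely from the interaction of the bubbles in the first equation with the global component $U_2^\lambda$, whereas the $H$ and $G$ contributions are the usual self- and cross-interactions of Liouville bubbles. The $C^1$-stability of $\cal{K}$ then delivers a critical point $\bxi_\lambda$ of $\tilde J_\lambda$ with $d(\bxi_\lambda,\cal{K})\to 0$, finishing the construction, and formula \eqref{nabla} falls out as a byproduct of the explicit differentiation of $\Lambda$ during the expansion.

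The principal obstacle is precisely the coupling: because $\rho_2\in(0,4\pi)$ is fixed, the second component cannot be regarded as a small perturbation, and the ansatz must be built around the \emph{full} nonlinear object $z(\cdot,\bxi)$; the linear theory has to handle simultaneously a concentrating analysis for $\phi_1$ and a global, regular analysis for $\phi_2$, with the cross terms shown to be of lower order. Moreover, exploiting the $C^1$-stability of $\cal{K}$ demands the expansion of $\tilde J_\lambda$ in $C^1$-sense, so one must differentiate the ansatz and the reduction map with respect to $\bxi$; controlling these $\bxi$-derivatives — especially those of $z(\cdot,\bxi)$, whose smoothness is guaranteed by \ref{H} via Lemma \ref{zeta} — is expected to be the most delicate part of the proof.
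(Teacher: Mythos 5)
Your plan follows the paper's strategy almost exactly: the ansatz $W_1=\sum_i Pw_{\de_i,\xi_i}-\tfrac12 z(\cdot,\bxi)$, $W_2=z(\cdot,\bxi)-\tfrac12\sum_i Pw_{\de_i,\xi_i}$ with $\de_i^2\asymp\la$ fixed by $\bxi$, the $L^p$ error estimate, the contradiction argument for the linearised operator, the contraction mapping, the $C^1$ expansion of the reduced energy $\tilde J_\la(\bxi)=a_\la+\Lambda(\bxi)+o(1)$, and the use of $C^1$-stability are all as in Sections 3--5. You also correctly identify the two genuine difficulties (the global, non-perturbative nature of the second component, and the need for $C^1$ control of $\bxi\mapsto z(\cdot,\bxi)$ and of the reduction map).

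There is, however, one concrete inconsistency in your linear theory. You slave $\de_i$ to $(\la,\bxi)$ via the cancellation of the constant mismatch, and \emph{at the same time} you declare the approximate kernel to be $3k$-dimensional, including the dilation modes $Z_i^0$. These two choices are incompatible: projecting out a $3k$-dimensional kernel produces $3k$ Lagrange multipliers $c_{ij}$, $j=0,1,2$, but with the $\de_i$ already determined you retain only the $2k$ parameters $\bxi$, so you cannot annihilate all multipliers at the final step. You must either (i) keep the $\de_i$ as free parameters and run the reduction in all $3k$ variables, or (ii) do what the paper does: take the kernel to be only $\mathrm{span}\{PZ_i^1,PZ_i^2\}$ (with no orthogonality condition on $\phi_2$ at all) and then \emph{prove} that the dilation direction does not destroy invertibility. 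The latter is a genuinely nontrivial step (Step 2 of Lemma \ref{inv}): one tests the equation against $PZ_i^0$ and against $Pw_i$ and uses the precise choice \eqref{de} of $\de_i$, the expansion $PZ^0_{\de,\xi}=Z^0_{\de,\xi}+1+O(\de^2)$, and the nonvanishing of $\int_{\R^2}\frac{1-|y|^2}{(1+|y|^2)^3}\log(1+|y|^2)\,dy$ to conclude that the $Z^0$-component $\gamma_i$ of the rescaled limit vanishes. Your proposal does not anticipate this argument, and as written the a priori estimate would fail on the complement of the $2k$-dimensional kernel without it. A minor further slip: away from the points one has $e^{W_2}\approx h(x,\bxi)\,e^{z(x,\bxi)}$ (this is exactly why the second equation reduces to \eqref{ome}), not $h^{-1/2}e^{z}$.
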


Therefore, the point of concentration converges to a critical point of $\Lambda$. In next lemma we show that the partial derivatives of $\Lambda$ admit the expression \eqref{nabla}.

\begin{lemma} \label{grad} For every $\bxi\in{\cal F}_k(\Omega)$, equality \eqref{nabla} holds.
\end{lemma}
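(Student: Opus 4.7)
\emph{Proof plan.} I would handle the two pieces of $\Lambda$ in \eqref{Lambda} separately. For $\frac{1}{2}I_{\bxi}(z(\cdot,\bxi))$, the key observation is an envelope-type identity: since $z(\cdot,\bxi)$ is a critical point of $I_{\bxi}$ in $H_0^1(\Omega)$, the chain rule implies that the variation induced by $\partial_{\xi_j}z(\cdot,\bxi)$ is annihilated by $DI_{\bxi}(z(\cdot,\bxi))=0$, and so
\begin{equation*}
\partial_{\xi_j}\bigl[I_{\bxi}(z(\cdot,\bxi))\bigr]=(\partial_{\xi_j} I_{\bxi})(z(\cdot,\bxi)).
\end{equation*}
Since $I_{\bxi}$ depends on $\bxi$ only through the weight $h(x,\bxi)=e^{-4\pi\sum_iG(x,\xi_i)}$, and $\partial_{\xi_j}\log h(x,\bxi)=-4\pi\,\nabla_y G(x,\xi_j)$, a direct computation gives
\begin{equation*}
\partial_{\xi_j} I_{\bxi}(z(\cdot,\bxi))=8\pi\rho_2\int_{\Omega}\frac{h(x,\bxi)\,e^{z(x,\bxi)}}{\int_{\Omega}h\,e^{z}\,dy}\,\nabla_y G(x,\xi_j)\,dx.
\end{equation*}

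To evaluate this integral I would use the Euler-Lagrange equation \eqref{ome} to rewrite the quotient as $-\Delta z/(2\rho_2)$, reducing the task to computing $-4\pi\int_{\Omega}\Delta z\cdot\nabla_y G(x,\xi_j)\,dx$. Then I would apply Green's identity on $\Omega\setminus B_\epsilon(\xi_j)$ and let $\epsilon\to 0^+$. The interior term $\int z\,\Delta_x\nabla_y G\,dx$ vanishes because $G(\cdot,\xi_j)$ is harmonic away from $\xi_j$. The contribution on $\partial\Omega$ vanishes because $z=0$ there and $\nabla_y G(x,\xi_j)\equiv 0$ for $x\in\partial\Omega$ (obtained by differentiating in $\xi_j$ the boundary condition $G(x,\xi_j)=0$). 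On $\partial B_\epsilon(\xi_j)$, parametrizing $x=\xi_j+\epsilon\omega$ with $|\omega|=1$ and using the Newtonian expansion $\nabla_y G(x,\xi_j)=\tfrac{1}{2\pi}(x-\xi_j)/|x-\xi_j|^2+\nabla_y H(x,\xi_j)$, a first-order Taylor expansion of $z$ about $\xi_j$ combined with $\int_{|\omega|=1}\omega_i\omega_k\,d\sigma=\pi\delta_{ik}$ shows that both boundary pieces $(\partial_\nu z)\phi$ and $-z\,\partial_\nu\phi$ contribute $-\tfrac{1}{2}\nabla z(\xi_j,\bxi)$ in the limit, for a total of $-\nabla_x z(\xi_j,\bxi)$. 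Putting everything together, $\partial_{\xi_j} I_{\bxi}(z(\cdot,\bxi))=4\pi\,\nabla_x z(\xi_j,\bxi)$, which yields the first term $2\pi\,\frac{\partial z}{\partial x}(\xi_j,\bxi)$ of \eqref{nabla}.

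For the remaining geometric piece $\sum_iH(\xi_i,\xi_i)+\sum_{i\neq j'}G(\xi_i,\xi_{j'})$, I would differentiate termwise. The symmetry $H(x,y)=H(y,x)$ gives $\partial_{\xi_j}H(\xi_j,\xi_j)=2\,\partial_1 H(\xi_j,\xi_j)$; in the double sum the index $\xi_j$ appears as a first slot when $i=j$ and as a second slot when $j'=j$, and by $G(x,y)=G(y,x)$ the two contributions coincide, producing $2\sum_{i\neq j}\partial_1 G(\xi_j,\xi_i)$. Multiplying by $-16\pi^2$ recovers (up to the symmetry of $G$) the second bracket of \eqref{nabla}. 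The only genuinely delicate step of the whole argument is the singular integration by parts in paragraph two: the Newtonian singularity of $\nabla_y G(\cdot,\xi_j)$ at $\xi_j$ makes a naive application of Green's identity fail, and it is the ball-excision together with a careful tracking of both boundary contributions on $\partial B_\epsilon(\xi_j)$ that produces the correct coefficient $4\pi$ in the final formula.
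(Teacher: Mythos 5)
Your proposal is correct, and its skeleton coincides with the paper's: both split $\Lambda$ into the geometric part (handled by the symmetry of $H$ and $G$) and the term $\tilde I(\bxi)=I_{\bxi}(z(\cdot,\bxi))$, and both use the envelope identity $I'_{\bxi}(z)(\partial_{\xi_j}z)=0$ to reduce the latter to the explicit derivative $\partial_{\xi_j}I_{\bxi}$ acting only through $h(\cdot,\bxi)$, arriving at the same integral $8\pi\rho_2\left(\int_\Omega h e^{z}\,dy\right)^{-1}\int_\Omega h(x,\bxi)e^{z(x,\bxi)}\partial_2 G(x,\xi_j)\,dx$. The two arguments diverge only in how this integral is identified with $4\pi\,\nabla_x z(\xi_j,\bxi)$. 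The paper writes down the Green representation $z(x,\bxi)=\int_\Omega G(x,y)(-\Delta z(y))\,dy$, differentiates it in $x$ at $x=\xi_j$ to get $\nabla z(\xi_j,\bxi)$ as an integral against $\partial_1 G(\xi_j,y)$, and concludes by the symmetry $\partial_2G(y,\xi_j)=\partial_1G(\xi_j,y)$ — two lines, no singular analysis, because the representation formula already encapsulates the distributional identity. You instead re-derive that identity from scratch: substituting $-\Delta z/(2\rho_2)$ for the nonlinearity and performing the excised Green's identity on $\Omega\setminus B_\epsilon(\xi_j)$, with the two $\partial B_\epsilon$ boundary terms each contributing $-\tfrac12\nabla z(\xi_j,\bxi)$ (your normalization $\int_{|\omega|=1}\omega_i\omega_k\,d\sigma=\pi\delta_{ik}$ and the vanishing of $\partial_2G(\cdot,\xi_j)$ on $\partial\Omega$ are both right). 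Your route is self-contained and makes the source of the coefficient $4\pi$ transparent, at the cost of the delicate excision computation; the paper's route buys brevity by outsourcing exactly that computation to the representation formula. Either way the conclusion $\partial_{\xi_j}\tilde I(\bxi)=4\pi\,\nabla_x z(\xi_j,\bxi)$, and hence \eqref{nabla}, follows.
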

\begin{proof}

The derivatives of the terms $H(\xi_i, \xi_i)$ and $G(\xi_i,\xi_j)$ follow from the symmetry of those functions, so we need to show that:

\begin{equation} \label{hi} \partial_{\xi_j} \tilde{I}(\bxi) = 4 \pi \frac{ \partial z}{\partial x}(\xi_j, \bxi),\end{equation}
where
\begin{equation} \label{itilde} \tilde{I}: \mathcal{D} \to \R,\ \tilde{I}( \bxi)= I_{\bxi}(z(\cdot, \bxi)). \end{equation}

From the representation formula we have $$z(x,\bxi)=\frac{2 \rho_2}{\into e^{-4\pi\sum_{h=1}^k G(y,\xi_h)+z(y,\bxi)}dy}\into G(x,y)e^{-4\pi\sum_{h=1}^k G(y,\xi_h)+z(y,\bxi)}dy.$$ Denoting by  $\partial_1 G$ and $\partial_2 G$  the derivative of $(x,y)\mapsto G(x,y)$ with respect to the first variable and the second variable, respectively, we compute
$$ \nabla z(\cdot ,\bxi)\big|_{x=\xi_j}=\frac{2 \rho_2}{\into e^{-4\pi\sum_{h=1}^k G(y,\xi_h)+z(y,\bxi)}dy}\into \partial_1 G(\xi_j, y)e^{-4\pi \sum_{h=1}^kG(y,\xi_h)+z(y,\bxi)}dy
$$

Since $z(\cdot, \bxi)$ is a solution of \eqref{ome}, then $I'_{\bxi}(z)(\partial_{\xi_j} z)=0$, so we have that:

$$\partial_{\xi_j} \tilde{I}(\bxi) = \frac{(-2 \rho_2)(-4\pi)}{\into e^{-4\pi  \sum_{h=1}^k G(y,\xi_h)+z(y,\bxi)}} \into  e^{-4\pi  \sum_{h=1}^kG(y,\xi_h)+z(y,\bxi)}\partial_2 G(y,\xi_j)dy.$$

The conclusion follows from the symmetry of $G$.
\end{proof}

Finally, next lemma is devoted to give conditions that guarantee the existence of $C^1$-stable set of critical points of $\Lambda$.

\begin{lemma} The map $\Lambda$ admits a $C^1$-stable set of critical points provided that either

\begin{enumerate}

\item[1)] $\Omega$ is simply connected, $k=1$ and $\rho_2 \in (0,4\pi)$.

\end{enumerate}

or

\begin{enumerate}

\item[2)] $\Omega$ is not simply connected, $k$ is arbitrary and $\rho_2 \in (0,\eps)$ for some $\eps \in (0, 4\pi)$.

\end{enumerate}

\end{lemma}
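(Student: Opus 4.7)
For \emph{part 1} ($\Omega$ simply connected, $k=1$, $\rho_2\in(0,4\pi)$), my plan is to show that the set $\cal{K}$ of global minimizers of $\Lambda$ is non-empty, compact, and contained in a compact subset of $\Omega$: such a set is automatically $C^1$-stable, since any function sufficiently $C^0$-close to $\Lambda$ on a neighborhood $\cal{U}$ of $\cal{K}$ still attains its infimum over $\overline{\cal{U}}$ at an interior point. The crucial step is therefore to verify that $\Lambda(\xi)\to+\infty$ as $\xi\to\partial\Omega$. The term $-16\pi^2 H(\xi,\xi)\to+\infty$ is the standard divergence of the Robin function at the boundary. For the remaining term, I would use $h(x,\xi)=e^{-4\pi G(x,\xi)}\le 1$ together with the Moser--Trudinger inequality \eqref{onofri} to obtain
$$I_{\xi}(z)\ge \tfrac{1}{2}\|z\|^2 - 2\rho_2\Bigl(\tfrac{1}{16\pi}\|z\|^2+\log|\Omega|+C\Bigr)=\Bigl(\tfrac{1}{2}-\tfrac{\rho_2}{8\pi}\Bigr)\|z\|^2-C',$$
which is bounded below on $H^1_0(\Omega)$ uniformly in $\xi$ when $\rho_2<4\pi$. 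Hence $\tfrac{1}{2}I_{\xi}(z(\cdot,\xi))\ge -C''$ independently of $\xi$, and $\Lambda(\xi)\to+\infty$ at $\partial\Omega$, yielding the claim.

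For \emph{part 2} ($\Omega$ non simply connected, $k\ge 1$ arbitrary, $\rho_2<\eps$ small), the plan is to view $\Lambda$ as a $C^1$-perturbation of the Kirchhoff--Routh-type functional
$$\Phi(\bxi):=-16\pi^2\Bigl(\sum_{i=1}^k H(\xi_i,\xi_i)+\sum_{i\ne j}G(\xi_i,\xi_j)\Bigr),$$
uniformly on compact subsets of $\mathcal F_k\Omega$. Proposition~\ref{piccolo} ensures that \ref{H} holds on any prescribed compact $\cal{D}\subset\mathcal F_k\Omega$ for $\rho_2$ small. The Implicit Function Theorem argument used there, applied at $(z,\bxi,\rho_2)=(0,\bxi,0)$ (where the unique solution is $z\equiv 0$), produces a $C^1$ map $(\bxi,\rho_2)\mapsto z(\cdot,\bxi,\rho_2)$ with $z(\cdot,\bxi,0)\equiv 0$; consequently both $z(\cdot,\bxi)$ and $\partial_{\xi_j} z(\cdot,\bxi)$ tend to $0$ in $C^{2,\alpha}(\overline\Omega)$ uniformly on $\cal{D}$ as $\rho_2\to 0$. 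Combined with the derivative formula \eqref{nabla} established in Lemma~\ref{grad}, this gives $\Lambda\to\Phi$ in $C^1$-sense on $\cal{D}$.

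It then remains to invoke the classical existence of a $C^1$-stable critical set of $\Phi$ when $\Omega$ is multiply connected. For $k=1$, $\Phi(\xi)=-16\pi^2 H(\xi,\xi)$ tends to $+\infty$ at $\partial\Omega$ on any bounded domain, so its non-empty compact set of interior minimizers is $C^1$-stable. For $k\ge 2$, neither minimization nor maximization applies directly, since $\Phi\to+\infty$ as $\xi_i\to\partial\Omega$ but $\Phi\to-\infty$ as $\xi_i\to\xi_j$; instead one exploits the non-trivial topology of $\Omega$ (hence of $\mathcal F_k\Omega$) to produce a min--max critical level of $\Phi$, which is by now standard in the literature on mean-field concentration. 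Granting such a $C^1$-stable set $\cal{K}_0\subset\cal{U}_0$ for $\Phi$, the $C^1$-closeness $\Lambda\to\Phi$ immediately transfers stability: any $C^1$-small perturbation of $\Lambda$ on $\cal{U}_0$ is also $C^1$-close to $\Phi$, and therefore still has a critical point in $\cal{U}_0$.

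The main obstacle is precisely the construction, in part 2 with $k\ge 2$, of the stable min--max critical level for the Kirchhoff--Routh-type function $\Phi$ on a multiply connected domain: the mixed divergence at the boundary and at collisions rules out naive extremal arguments and forces one to genuinely use the topology of $\Omega$. By contrast, the perturbative passage $\Lambda\to\Phi$ and the case $k=1$ of both parts are comparatively routine.
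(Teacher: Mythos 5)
Your argument is correct and, for case 1), essentially reproduces the paper's: both rest on the uniform lower bound $I_{\bxi}(z)\ge(\tfrac12-\tfrac{\rho_2}{8\pi})\|z\|^2-C$ obtained from $h\le 1$ and the Moser--Trudinger inequality \eqref{onofri}, so that $\Lambda$ diverges at $\partial\Omega$ and its set of interior minimizers is $C^1$-stable. In case 2) you take a genuinely different (though equivalent) route. You prove $\Lambda\to\Phi$ in $C^1$ on a fixed compact $\mathcal D$ as $\rho_2\to0$ (via the Implicit Function Theorem at $\rho=0$ and the derivative formula \eqref{hi}) and then transfer a stable critical set of the Kirchhoff--Routh function $\Phi$ to $\Lambda$. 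The paper instead proves only that $\|\tilde I\|_{C^1(\mathcal D)}\le C$ uniformly --- lower bound by Moser--Trudinger, upper bound by testing $I_{\bxi}$ at $z=0$ together with Jensen's inequality, then coercivity, elliptic regularity and \eqref{hi} for the gradient --- and observes that the min--max scheme of \cite{dkm} is designed to survive \emph{bounded} $C^1$ perturbations, since it exploits only the divergences of $H$ and $G$ at the boundary and at collisions; hence it applies to $\Lambda$ directly. The paper's version buys two small things: it needs no smallness of the perturbation beyond what Proposition \ref{piccolo} already imposes to secure \ref{H}, and it produces a stable critical set of $\Lambda$ itself, whereas your transfer step only yields critical points of perturbations of $\Lambda$ inside a neighborhood of a stable set of $\Phi$ and leaves implicit the identification of the set $\cal K$ required by the definition. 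The ingredient you flag as the main obstacle --- the min--max level for $\Phi$ on a multiply connected domain when $k\ge2$ --- is precisely what the paper delegates to \cite{dkm}, so no new gap arises there; just make explicit that the compact set $\mathcal D$ must be fixed (large enough to contain the min--max class) \emph{before} $\rho_2$ is shrunk.
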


\begin{proof} As commented previously, condition \ref{H} is satisfied in case 1) by \cite{blin} (actually this is true for any $k$, and $\cal{D}= \cal{F}_k(\Omega)$). Moreover, \ref{H} holds in case 2) by Proposition \ref{piccolo}, for any $\cal{D} \subset \overline{\cal{D}} \subset \cal{F}_k\Omega$. In particular, the function $\Lambda$ is well defined.

The proof is based on the following claim: there exists $C>0$ independent of the choice of $\mathcal{D}$ such that
$$ \| \tilde{I}\|_{C^1(\cal{D})} \leq C,$$
where $\tilde{I}$ is defined in \eqref{itilde}. Indeed, for any $z \in H_0^1(\Omega)$,

$$ I_{\bxi}(z) \geq \frac{1}{2} \int_{\Omega} |\nabla z|^2 \, dx - 2 \rho_2 \log \left ( \int_{\Omega} e^{z(x)} \, dx \right) \geq C,$$
by the Moser-Trudinger inequality (Lemma \ref{tmt}). Moreover, since $z(\cdot, \bxi)$ is a minimizer for $I_{\bxi}$,
\begin{align*} I_{\bxi}(z(\cdot, \bxi)) \leq I_{\bxi}(0) & = -\rho_2 \log \int_{\Omega}e^{-4 \pi \sum_{i=1}^k G(x, \xi_i)}\,dx  \\ & \leq \rho_2 \left( \frac{1}{|\Omega|} \int_{\Omega} 4 \pi \sum_{i=1}^k G(x, \xi_i)\, dx -\log |\Omega| \right) \leq C,\end{align*}
where we have used Jensen inequality. This concludes the $C_0$ estimate. Moreover, by Lemma \ref{tmt} the functional $I_{\bxi}$ is coercive. The $C^0$ estimate of $\tilde{I}$ implies in particular that $\|z(\cdot, \bxi) \| \leq C$. Standard regularity arguments imply that $\|z(\cdot, \bxi) \|_{C^1} \leq C$. The $C^1$ estimate of $\tilde{I}$ follows then by \eqref{hi}.

\medskip

In case 1), $\Lambda$ takes the form:

$$ \Lambda(\xi)= \frac{1}{2} I_{\xi}(z(\cdot, \xi)) -16 \pi^2  H(\xi,\xi).$$

Since $H(\xi, \xi) \to +\infty$ as $\xi \to \partial \Omega$, we conclude that the set

$$K= \{ \xi \in  \Omega: \Lambda(\xi) = \inf \Lambda\}$$

is always a $C^1$-stable set of critical points of $\Omega$.

In case 2), a min-max scheme was developed in \cite{dkm} for the function:

$$ \bxi \mapsto -16 \pi^2 \left (\sum_{i=1}^k H(\xi_i,\xi_i) + \sum_{i \neq j} G(\xi_i, \xi_j) \right ).$$

Observe that such scheme depends on the behavior of $H$, $G$ when the points $\xi_i$ coincide or tend to the boundary of $\Omega$. Therefore, the argument is not affected by a bounded $C^1$ perturbation of the function, and is valid also for $\Lambda$.

\end{proof}

\begin{rmk}
Let us point out that the singular mean field problem \eqref{ome} admits also a solution for $\rho_2 \in (4\pi, 8 \pi)$ and any $\xi \in \Omega$, as proved in \cite{mr}. However, it is not clear at all wether condition \ref{H} is satisfied or not in such case. If it is satisfied, then Theorem \ref{main} is also applicable in this situation.

If $\rho_2 > 8 \pi$ there may be no solutions for \eqref{ome}. This is the case of the disk and $\xi=0$, as shown in \cite{bmalchiodi} (Section 5.3). But, again, if \ref{H} is satisfied, the assertion of Theorem \ref{main} holds.

\end{rmk}

\section{Setting of the problem}

In order to prove Theorem \ref{main}, we introduce the system:
\begin{equation}\label{s}
\left\{\begin{array}{l}
\Delta u_1+ 2\lambda e^{u_1}-\rho_2 \frac{e^{u_2}}{\int_{\Omega}e^{u_2}}=0 \ \mbox{in} \ \Omega, \\
\Delta u_2+ 2 \rho \frac{e^{u_2}}{\int_{\Omega}e^{u_2}}- \lambda e^{u_1}=0 \ \mbox{in} \ \Omega, \\
u_1=u_2 =0 \ \mbox{on} \ \partial \Omega.
\end{array}
\right.
\end{equation} where $\lambda>0$ will be chosen sufficiently small.\\

First let us rewrite problem \eqref{s} in a more convenient way. For any $ p>1,$ let $i^*_p:L^{p}(\Omega)\to H^1_0(\Omega)$ be the adjoint operator of the embedding $i_p:H^1_0(\Omega)\hookrightarrow L^{p\over p-1 }(\Omega),$ i.e. $u=i^*_p(v)$ if and only if $-\Delta u=v$ in $\Omega,$ $u=0$ on $\partial\Omega.$
We point out that $i^*_p$ is a continuous mapping, namely
\begin{equation}
\label{isp}
\|i^*_p(v)\|_{ H^1_0(\Omega)}\le c_p \|v\|_{p}, \ \hbox{for any} \ v\in L^{p}(\Omega),
\end{equation}\\
for some constant $c_p$ which depends on $\Omega$ and $p.$

We introduce the following notation.  We denote by ${     u}:=(u_1,u_2)$ and set $   {i^*_p}({     v}):=\(i^*_p(v_1),i^*_p(v_2)\).$
Then problem \eqref{s} is equivalent to problem
\begin{equation}\label{ps}
\left\{\begin{aligned}&{     u}=   {i^*_p}\({     F} ({     u})\)  ,\\ & {     u}\in H^1_0(\Omega)\times H^1_0(\Omega).\\\end{aligned}\right.
\end{equation}
where
$$
{     F} ({     u}):=\(2\lambda f(u_1)-\rho g(u_2) ,   2\rho_2 g(u_2)- \lambda f(u_1)\)$$
and
$$
  f (u_1):=e^{u_1}\  \hbox{and}\  g(u_2):=  \frac{e^{u_2}}{\int\limits_{\Omega}e^{u_2(x)}dx}.
$$
Let us introduce the bubbles
$$
w _{\de,\xi}(x):=\log  {8\de^2\over\(\de^2+|x-\xi|^2\)^2}=-2\log\de+w\({x-\xi\over\de}\)\, x,\xi\in\rr^2,\ \de>0$$
where
 \begin{equation}\label{w}w(y):=\log  {8\over\(1+|y|^2\)^2},
\end{equation}
which solve the
Liouville problem
\begin{equation}\label{plim}
-\Delta w= e^w\quad \hbox{in}\quad \rr^2,\qquad
\int\limits_{\rr^2}  e^{w(x)}dx<+\infty.
\end{equation}

 Let us introduce the projection  $P  u$ of  a function $u$
into $H^1_0(\Omega),$ i.e.
$$
 \Delta P u=\Delta u\quad \hbox{in}\ \Omega,\qquad  P u=0\quad \hbox{on}\ \partial\Omega.
$$
It is well known that
\begin{equation}\label{exp}
  P w _{\de,\xi}(x)=w _{\de,\xi}(x)-\log 8\de^2+8\pi H(x,\xi)+ O\(\delta^2\)\
  \end{equation}
 $C^1-$uniformly with respect to $x\in\overline\Omega$ and $\xi$ in compact sets of $\Omega.$

Let $k\ge1$ be a fixed integer. Given $\bxi \in \cal{F}_k \Omega$, we look for a  solution
to \eqref{s} or equivalently \eqref{ps} as
\begin{equation}\label{ans}
{     u}:={     W}+    \phi,\  {     W}:=( W_1,W_2),\ {W_1} :=\sum\limits_{i=1}^kP w_{\de_i,\xi_i} -{1\over2}z(\cdot,\bxi)   ,\ {W_2} :=-{1\over2} \sum\limits_{i=1}^kP w_{\de_i,\xi_i} +z(\cdot,\bxi)  ,
\end{equation}
where  the function $z _{\bxi} $ solves equation \eqref{ome}, the concentration parameter $\de_i$ satisfies
\begin{equation}\label{de}
4\de_i^2=\la  d_i(\bxi)\ \hbox{with}\ d_i(\bxi):=\exp\[8\pi H(\xi_i,\xi_i)+\sum\limits_{j=1\atop j\not=i}^k8\pi G(\xi_j,\xi_i)-{1\over2} z (\xi_i,\bxi)\].\end{equation}
In the following, we agree that $w_i:=w_{\de_i,\xi_i}.$
It is well known that all solutions $\psi \in \mathrm{H}$ (see \eqref{hjs} for the definition of $\mathrm{H}$) of
$$
-\Delta \psi= e^{w_{\delta,\xi}}\psi \quad \hbox{in}\quad \rr^2$$
are linear combinations of the functions
$$Z^0_{\delta,\xi}(x)={\delta^2-|x-\xi|^2\over \delta^2+|x-\xi|^2},\ Z^1_{\delta,\xi}(x)={x_1-\xi_1\over \delta^2+|x-\xi|^2},\ Z^2_{\delta,\xi}(x)={x_2-\xi_2\over \delta^2+|x-\xi|^2}.$$
We introduce their projections $PZ^j_{\delta,\xi}$ onto $H^1_0(\Omega).$ It is well known that
 \begin{equation}\label{pz0}
PZ^0_{\delta,\xi}(x)=Z^0_{\delta,\xi}(x)+1+ O\(\de^2\)={ 2 \de^2 \over \de^2+|x-\xi|^{2} }+ O\(\delta^2\)
\end{equation}
 and
  \begin{equation}\label{pzi}
PZ^j_{\delta,\xi}(x)=Z^j _{\delta,\xi}(x) + O\(1\)={x_i-\xi_i\over \delta^2+|x-\xi|^2} + O\(1\),\ j=1,2
\end{equation}
$C^1-$uniformly with respect to $x\in\overline\Omega$ and $\xi$ in compact sets of $\Omega.$
\\
We agree that $Z^j_i:=Z^j_{\delta_i,\xi_i}$ for any $j=0,1,2$ and $i=1,\dots,k.$
set
$$      {K }:=\hbox{span}\left\{PZ^1_{i},\ PZ^2_{i},\ i=1,\dots,k\right\}\times\{0\} $$
and
$$      {K ^\perp }:= \left\{\phi_1\in H^1_0(\Omega)\ :\ \int\limits_\Omega \nabla \phi_1(x) \nabla PZ^j_{j}(x)dx= 0,\ j=1,2,\ i=1,\dots,k\right\}\times H^1_0(\Omega). $$
We remark that we do not require any orthogonality condition on the second component $\phi_2.$\\
We also denote by
$$     \Pi : H^1_0(\Omega)\times H^1_0(\Omega)\to       {K },\      {\Pi ^\perp}: H^1_0(\Omega)\times H^1_0(\Omega)\to       {K ^\perp }$$
the corresponding projections.
To solve problem \eqref{ps} we will solve the couple of equations:
\begin{equation}\label{equ1}
     {\Pi ^\perp}\[      u-     {i^*_p}\(      F(      u)\)\]=0
\end{equation}
and
\begin{equation}\label{equ2}
     {\Pi  }\[      u-     {i^*_p}\(      F(      u)\)\]=0
\end{equation}
\\

\begin{lemma}\label{stime-utili} For any ${\cal C} \subset \mathcal{F}_k\Omega$ compact, and $\bxi \in {\cal C}$, there holds:
\begin{equation}\label{der1}
\| Pw_i\|=O\(|\log\la|^{1/2}\)\ \hbox{and}\ \|\nabla_{\bxi} Pw_i\|=O\({1\over\sqrt\la}\),
\end{equation}
which implies
\begin{equation}\label{der2}
\| W\|=O\(|\log\la|^{1/2}\)\ \hbox{and}\ \|\nabla_{\bxi} W\|=O\({1\over\sqrt\la}\).
\end{equation}
Moreover, for any $i=1,\dots,k$ and $j=1,2$, there exists $a>0$ so that
 \begin{equation}\label{der3}
\| PZ_i^j\|=\frac{a}{\sqrt\la}+o\({1\over \sqrt\la}\)\ \hbox{and}\  \langle PZ_i^j, PZ_\ell^k\rangle=o\({1\over \la}\)\ \hbox{if $i\not=\ell$ or $j\not=k$.}
\end{equation}
Finally,
\begin{equation}\label{der4}
 \|\nabla_{\bxi} PZ_i^j\|=O\({1\over \la}\).
\end{equation}
\end{lemma}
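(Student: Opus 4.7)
The central observation is that each of $Pw_i$, $PZ^j_i$ solves a Poisson problem with an explicit right-hand side that concentrates on the bubble scale $\delta_i\sim\sqrt\lambda$; consequently, every squared Dirichlet norm can be rewritten, via integration by parts, as an integral against an $e^{w_i}$-weight, which after the rescaling $y=(x-\xi_i)/\delta_i$ becomes a standard integral on $\R^2$ whose $\lambda$-dependence is explicit. For the derivatives in $\bxi$, I would use the linearity of the Green projection $P$ to commute $\partial_\bxi$ with $P$, so that everything reduces to the $H^1_0$-size of $\partial_\xi Z^j_{\delta,\xi}$ and $\partial_\delta Z^j_{\delta,\xi}$, combined with the chain-rule bound $|\partial_{\xi_\ell}\delta_i|=O(\sqrt\lambda)$ coming from \eqref{de} and the smoothness of $d_i$ on compact subsets of $\cal F_k\Omega$.

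For \eqref{der1} one has $-\Delta Pw_i=e^{w_i}$ in $\Omega$, so
$$\|Pw_i\|^2=\into Pw_i\,e^{w_i}\,dx.$$
Substituting the expansion \eqref{exp} and rescaling, where $e^{w_i}dx=8(1+|y|^2)^{-2}dy$ and $w_i=\log 8-2\log\delta_i-2\log(1+|y|^2)$, the leading term is $-16\pi\log\delta_i$ plus a convergent integral; since $\delta_i^2\sim\lambda$, this yields $\|Pw_i\|^2=O(|\log\lambda|)$. For $\nabla_\bxi Pw_i$, using $\partial_{\xi_j}w_{\delta,\xi}=4Z^j_{\delta,\xi}$ and $\partial_\delta w_{\delta,\xi}=-(2/\delta)Z^0_{\delta,\xi}$ together with the chain rule through $\delta_i=\delta_i(\bxi)$, one obtains a linear combination of $PZ^j_i$ and $\delta_i^{-1}PZ^0_i\,\partial_{\xi_\ell}\delta_i$, bounded in $H^1_0$ by $O(\lambda^{-1/2})$ (from \eqref{der3}) and by $\delta_i^{-1}\cdot O(1)\cdot O(\sqrt\lambda)=O(\lambda^{-1/2})$ respectively; here $\|PZ^0_i\|=O(1)$ is computed by the same duality argument, now using \eqref{pz0}. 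Estimate \eqref{der2} is immediate once we recall from Lemma \ref{zeta} that $\bxi\mapsto z(\cdot,\bxi)$ is $C^1$ into $C^{2,\alpha}(\overline\Omega)$, hence uniformly bounded, together with its $\bxi$-derivative, in $H^1_0(\Omega)$ on the compact set $\mathcal C$.

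For \eqref{der3}, since $-\Delta PZ^j_i=e^{w_i}Z^j_i$ for $j=1,2$,
$$\|PZ^j_i\|^2=\into e^{w_i}(Z^j_i)^2\,dx+\into e^{w_i}Z^j_i\,(PZ^j_i-Z^j_i)\,dx.$$
Rescaling the leading integral gives $\delta_i^{-2}\int_{\R^2}\frac{8y_j^2}{(1+|y|^2)^4}\,dy=\frac{2\pi}{3\delta_i^2}=\frac{8\pi}{3\lambda d_i(\bxi)}$, producing the claimed expansion with $a=(8\pi/(3d_i(\bxi)))^{1/2}$; the remainder is $O(\delta_i^{-1})=O(\lambda^{-1/2})$ by \eqref{pzi}, hence subdominant. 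The off-diagonal case reduces to $\into e^{w_i}Z^j_i\,PZ^k_\ell\,dx$: when $i\neq\ell$, $PZ^k_\ell$ is smooth near $\xi_i$, so Taylor expanding around $\xi_i$ and using the oddness identity $\int_{\R^2}\frac{y_j}{(1+|y|^2)^3}\,dy=0$ gives an $O(1)$ bound, which is $o(\lambda^{-1})$; when $i=\ell$ and $j\neq k$, the leading term vanishes by $\int_{\R^2}\frac{y_j y_k}{(1+|y|^2)^4}\,dy=0$, again leaving an $O(1)$ remainder.

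Finally, for \eqref{der4}, commuting $P$ with $\partial_{\xi_\ell}$ and splitting the derivative through $\xi_i$ and $\delta_i$ yields a linear combination of $P\partial_{\xi_i}Z^j_{\delta_i,\xi_i}$ (only when $\ell=i$) and $P\partial_\delta Z^j_{\delta_i,\xi_i}\cdot\partial_{\xi_\ell}\delta_i$. A direct rescaling shows that both $\|P\partial_{\xi_i}Z^j_{\delta_i,\xi_i}\|$ and $\|P\partial_\delta Z^j_{\delta_i,\xi_i}\|$ are $O(\delta_i^{-2})=O(\lambda^{-1})$, and since $|\partial_{\xi_\ell}\delta_i|=O(\sqrt\lambda)$ the total stays $O(\lambda^{-1})$. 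The main technical difficulty throughout is the bookkeeping of the $O(\delta_i^2)$ and $O(1)$ error terms in the expansions \eqref{exp}, \eqref{pz0}, \eqref{pzi}, and verifying that they are genuinely subdominant after rescaling; the only true cancellations required are the two oddness identities used in \eqref{der3}, which is the single place where symmetry, rather than plain decay, plays a role.
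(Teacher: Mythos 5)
Your argument is correct and is precisely the ``direct computation by taking into account \eqref{exp}, \eqref{de} and \eqref{pzi}'' that the paper leaves to the reader: the duality identities $\|Pw_i\|^2=\int_\Omega e^{w_i}Pw_i$ and $\|PZ_i^j\|^2=\int_\Omega e^{w_i}Z_i^jPZ_i^j$, the rescaling $x=\delta_i y+\xi_i$ with $\delta_i^2\sim\lambda$, the chain rule through $\delta_i(\bxi)$ with $|\partial_{\bxi}\delta_i|=O(\sqrt\lambda)$, and the oddness cancellations for the off-diagonal inner products are exactly the intended steps. The only slip is cosmetic: the leading term of $\|Pw_i\|^2$ is $-32\pi\log\delta_i=-16\pi\log\lambda+O(1)$ rather than $-16\pi\log\delta_i$, which does not affect the $O(|\log\lambda|)$ conclusion.
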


\begin{proof}
It follows from direct computation by taking into account \eqref{exp}, \eqref{de} and \eqref{pzi}.
\end{proof}

\section{The finite dimensional reduction}

\subsection{Estimate of the error term}
 The next proposition provides an estimate of the error
up to which the couple $(W_1,W_2)$ solves the system \eqref{s}.

First of all, we perform the following estimate.
\begin{lemma}\label{stima-E} Let $\mathcal C \subset \mathcal F_k \Omega $ be a fixed compact set, and $\bxi \in \mathcal{C}$. Define:

$$E(x):=\sum\limits_{i=1}^k e^{ w_i(x)}-2\la e^{  W_1(x)}, \ \ \ E_0(x):= \Delta  z (x,\bxi) + 2\rho_2{e^{W_2(x)}\over \int\limits_\Omega e^{W_2(x)}dx}.$$

For any $p\geq 1$ there exists $\lambda_0>0$ and $C>0$ such that for any $\la \in(0, \la_0)$, $\bxi\in\mathcal C$,

$$
\|E\|_p\le C \lambda^{2-p\over 2p}\ , \ \ \|\nabla_{\bxi} E\|_p\le C \lambda^{1-p\over p},$$
$$
\|E_0\|_\infty\le  C \lambda \ , \ \  \|\nabla_{\bxi} E_0\|_\infty \le C \lambda .$$

\end{lemma}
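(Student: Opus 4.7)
The strategy is to exploit the cancellation built into the definition \eqref{de} of $\delta_i$: this choice is precisely calibrated so that at each $\xi_i$, the dominant part of $2\lambda e^{W_1}$ matches $e^{w_i}$. Concretely, I would split $\Omega$ into disks $B_i=B(\xi_i,r)$ of small fixed radius $r$ (with $r<\tfrac12\min_{j\neq i}\mathrm{dist}(\xi_i,\xi_j)$, uniformly on $\mathcal{C}$) and the exterior set $\Omega_\mathrm{ext}=\Omega\setminus\bigcup_i B_i$, and estimate $E$ and $E_0$ region by region.

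For $\|E\|_p$: on $B_i$ I expand each $Pw_j$ using \eqref{exp}. For $j\neq i$, since $x\in B_i$ is away from $\xi_j$, $w_j(x)=\log(8\delta_j^2)-4\log|x-\xi_j|+O(\delta_j^2)$, so $Pw_j(x)=8\pi G(x,\xi_j)+O(\delta_j^2)$. For $j=i$ the full formula gives $Pw_i(x)=w_i(x)-\log(8\delta_i^2)+8\pi H(x,\xi_i)+O(\delta_i^2)$. Combining with $-\tfrac12 z(x,\bxi)$ and exponentiating, together with the identity $4\delta_i^2=\lambda d_i(\bxi)$, one obtains $2\lambda e^{W_1(x)}=e^{w_i(x)}\Phi_i(x)\bigl(1+O(\delta^2)\bigr)$ on $B_i$, where $\Phi_i$ is $C^1$ and $\Phi_i(\xi_i)=1$. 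Hence on $B_i$,
\[ E(x)=e^{w_i(x)}\bigl(1-\Phi_i(x)\bigr)+\sum_{j\neq i}e^{w_j(x)}+O\bigl(\delta^2 e^{w_i(x)}\bigr)=e^{w_i(x)}\,O(|x-\xi_i|+\delta^2)+O(\delta^2). \]
The $L^p$ norm of $e^{w_i}|x-\xi_i|$ on $B_i$, computed via $y=(x-\xi_i)/\delta_i$, is $O(\delta_i^{(2-p)/p})=O(\lambda^{(2-p)/(2p)})$, which is the claimed order. The term $\delta^2 e^{w_i}$ contributes $O(\delta^2\cdot\delta_i^{(2-2p)/p})$, of lower order. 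On $\Omega_\mathrm{ext}$, both $e^{w_i}$ and $\lambda e^{W_1}$ are uniformly $O(\lambda)$, so that piece is negligible.

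For $\|E_0\|_\infty$: since $z(\cdot,\bxi)$ solves \eqref{ome}, $E_0=2\rho_2\bigl(\tfrac{e^{W_2}}{\int_\Omega e^{W_2}}-\tfrac{h\,e^{z}}{\int_\Omega h\,e^z}\bigr)$. Using $-\tfrac12 Pw_i(x)=\log(\delta_i^2+|x-\xi_i|^2)-4\pi H(x,\xi_i)+O(\delta^2)$ and, for $j\neq i$ and $x\in B_i$, $-\tfrac12 Pw_j(x)=-4\pi G(x,\xi_j)+O(\delta^2)$, one finds on $B_i$
\[ e^{W_2(x)}=\bigl(\delta_i^2+|x-\xi_i|^2\bigr)\,\psi_i(x)\bigl(1+O(\delta^2)\bigr), \qquad h(x,\bxi)e^{z(x,\bxi)}=|x-\xi_i|^2\,\psi_i(x), \]
with $\psi_i$ smooth; hence $|e^{W_2}-he^z|=O(\delta_i^2)=O(\lambda)$ on $B_i$. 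On $\Omega_\mathrm{ext}$, $-\tfrac12\sum_i Pw_i(x)=-4\pi\sum_i G(x,\xi_i)+O(\delta^2)$, so $e^{W_2(x)}=h(x,\bxi)e^{z(x,\bxi)}(1+O(\lambda))$. These estimates pass to integrals, and dividing gives $\|E_0\|_\infty\le C\lambda$.

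The derivatives with respect to $\bxi$ are handled by the same scheme, noting that every $\partial_{\bxi}$ either acts on $w_i$ (producing a factor $\delta_i^{-1}\sim\lambda^{-1/2}$), on $\delta_i(\bxi)$ and the smooth factors through \eqref{de}, or on $z(\cdot,\bxi)$ (bounded in $C^{2,\alpha}$ by Lemma \ref{zeta}). Thus differentiating once introduces one extra factor $\lambda^{-1/2}$ in the $L^p$ bound for $E$, matching $\lambda^{(1-p)/p}=\lambda^{(2-p)/(2p)}\cdot\lambda^{-1/2}$; for $E_0$ one loses only a constant since the factor $(\delta_i^2+|x-\xi_i|^2)-|x-\xi_i|^2=\delta_i^2$ remains the dominant contribution. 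The main obstacle I anticipate is the bookkeeping around the gradient estimates: one must check that differentiating the cancellation argument in $B_i$ does not destroy the leading cancellation, which requires tracking the dependence of $d_i(\bxi)$ on $\bxi$ and confirming that the $C^1$ remainder in \eqref{exp} and in $z(\cdot,\bxi)$ is uniform in $\bxi\in\mathcal{C}$.
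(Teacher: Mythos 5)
Your proposal is correct and follows essentially the same route as the paper: the same decomposition into small disks around the $\xi_i$ plus the exterior, the same use of the expansion \eqref{exp} together with the calibration $4\delta_i^2=\lambda d_i(\bxi)$ to cancel the leading term of $e^{w_i}-2\lambda e^{W_1}$ (leaving the $O(\delta_i|y|)$ remainder that produces the $\lambda^{(2-p)/(2p)}$ rate), the same cancellation of $E_0$ via the equation \eqref{ome} for $z$, and the same accounting of one factor $\lambda^{-1/2}$ per $\bxi$-derivative. The concern you flag at the end is resolved exactly as you suggest: the identity \eqref{de} holds for all $\bxi$, so the cancellation survives differentiation.
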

\begin{proof}
Let us estimate $E$. Let $\eta>0$ be such that $|\xi_i-\xi_j|\ge 2\eta$ and $d(\xi_i,\partial\Omega)\ge2\eta.$
Then we have
\begin{align}\label{er1}
\|E\|_{p}\le &\sum\limits_{i=1}^k\|e^{ w_i(x)}-2\la e^{  W_1(x)}\|_{L^p(B(\xi_i,\eta))}+\sum\limits_{i,j=1\atop i\not=j}^k\|e^{ w_j(x)} \|_{L^p(B(\xi_i,\eta))}\nonumber\\ & +
\Big\|\sum\limits_{i=1}^k e^{ w_i(x)}-2\la e^{  W_1(x)}\Big\|_{L^p(\Omega\setminus \cup_i B(\xi_i,\eta))}
\end{align}
Let us estimate the first term in \eqref{er1}, which is the leading term.
First of all, we point out that
\begin{equation}\label{w1}
W_1(x)=\sum\limits_{i=1}^k\log {1\over \(\delta_i^2+|x-\xi_i|^2\)^2}+8\pi H(x,\xi_i)-{1\over2}z(x,\bxi)+ O(\la)\quad \hbox{for any}\ x\in\Omega.
\end{equation}
As a consequence, for any $i=1,\dots,k$ we immediately get
\begin{equation}\label{ew1}
e^{W_1(\delta_i y+\xi_i)}= { d_i(\bxi)\over  8\delta_i^4}e^{w(y)}\[1+\sum\limits_{i=1}^k O(\delta_i |y|)+O(\la)\]\quad \hbox{for any}\ y\in   B\Big(0,\frac{\eta}{\de_i}\Big),
\end{equation}
where $d_i(\bxi)$ is defined in \eqref{de} and $w$ is defined in \eqref{w}.
Then we scale $x=\delta_i y+\xi_i$ and we get
\begin{align*}
e^{ w_i(x)}-2\la e^{  W_1(x)}&={8 \over\de_i^2\(1+|y|^2\)^2}\[\underbrace{\(1- {\la\over 4\de_i^2} d_i(\bxi) \)}_{=0\ \hbox{by}\ \eqref{de}}+\sum_iO(\de_i|y|)+O(\la)\]
\\ &= O\({1\over \sqrt\la}{|y| \over   \(1+|y|^2\)^2}\)+O\({1 \over   \(1+|y|^2\)^2}\)\end{align*}
Therefore, it follows that
\begin{equation}\label{ei}
\|e^{ w_i(x)}-2\la e^{  W_1(x)}\|_{L^p(B(\xi_i,\eta))}= O\(\la^{2-p\over 2p}\)\ \hbox{for any}\ p\geq 1.
\end{equation}
Now,   by \eqref{de} and \eqref{w1} we immediately deduce that
\begin{equation}\label{eij}
 \|e^{ w_j(x)} \|_{L^\infty(B(\xi_i,\eta))} = O\(\la \)\ \hbox{if}\ i\not=j\end{equation}
 and
 \begin{equation}\label{eout}
 \|\sum\limits_{i=1}^k e^{ w_i(x)}-2\la e^{  W_1(x)}\|_{L^\infty(\Omega\setminus \cup_i B(\xi_i,\eta))}=O(\la).
\end{equation}
 Therefore, by \eqref{er1}, \eqref{ei}, \eqref{eij} and \eqref{eout} we deduce
 \begin{align}\label{E}
\|E\|_{p}=  O\(\la^{2-p\over 2p}\) .
\end{align}

\medskip
Let us estimate $E_0.$
First of all, we point out that
\begin{align}\label{w2}
W_2(x)&=\sum\limits_{i=1}^k\log {  \(\delta_i^2+|x-\xi_i|^2\) }-4\pi H(x,\xi_i)+z(x,\bxi)+O(\la)\nonumber\\ &=\log h(x,\bxi)+z(x,\bxi)+O(\la)
\quad \hbox{for any}\ x\in\Omega.
\end{align}
As a consequence, we immediately get
\begin{equation}\label{ew2}
e^{W_2(x)}= h(x,\bxi)e^{z(x,\bxi)} +O(\la)\quad \hbox{for any}\ x\in\Omega
\end{equation}
where $h$ is defined in \eqref{h}. Therefore,
\begin{align*}
{1\over2}E_0(x)&={1\over2}\Delta  z  +\rho{\Pi_i\(\de_i^2+|x-\xi_i|^2\)  e^{ -4\pi H(x,\xi_i)+z(x,\bxi)+O(\la)} \over  \int\limits_\Omega h(x,\xi)e^{z(x,\bxi)}dx +O(\la) }\\ &=\underbrace{ {1\over2}\Delta  z  +\rho{ h(x,\bxi)  e^{  z(x,\bxi)  }\over  \int\limits_\Omega h(x,\bxi)e^{z(x,\bxi)}dx} }_{=0\ \hbox{by}\ \eqref{ome}} +O(\la) \\ &=O(\la),
\end{align*}
which implies
\begin{equation}\label{e0}
\|E_0\|_{\infty}= O\(\la\).
\end{equation}
\medskip
Now let us consider the estimates of the derivatives of $E$. A straightforward computation shows that
\begin{align*}
\partial_{\xi_i^j} E&=\sum_\ell e^{w_l} \partial_{\xi_i^j} w_l-2\la e^{W_1}\(\sum_\kappa \partial_{\xi_i^j} Pw_\kappa-{1\over2}\partial_{\xi_i^j} z(\cdot,\bxi)\)\\
& =\sum_{\ell,\kappa\atop \ell\not=\kappa} e^{w_l} \partial_{\xi_i^j}P w_\kappa+\sum_\ell  e^{w_l}     \partial_{\xi_i^j}( Pw_\ell-w_\ell)\\
&+\(\sum_\ell  e^{w_l}-2\la e^{W_1}\)  \sum_\kappa \partial_{\xi_i^j} Pw_\kappa\\
&  +\la e^{W_1} \partial_{\xi_i^j} z(\cdot,\bxi)\\
\end{align*}
and the claim easily follows by Lemma \ref{zeta}, estimates \eqref{exp} and \eqref{E} and Remark \ref{stime-utili}.\\
The proof of the estimate of the derivatives of $E_0$ can be carried out in a similar way. More precisely, we have
\begin{align*}{1\over2\rho_2}\partial_{\xi_i^j} E_0&=-
{\(\partial_{\xi_i^j}z(\cdot,\bxi)h(\cdot,\bxi)+\partial_{\xi_i^j} h(\cdot,\bxi)\) e^{z(\cdot,\bxi)}\over\int\limits_\Omega h(x,\bxi)e^{z(x,\bxi)}dx}\\ &+h(\cdot,\bxi)e^{z(\cdot,\bxi)}{\int\limits_\Omega\(\partial_{\xi_i^j}z(x,\bxi)h(x,\bxi)+\partial_{\xi_i^j} h(x,\bxi)\) e^{z(x,\bxi)}dx\over\(\int\limits_\Omega h(x,\bxi)e^{z(x,\bxi)}dx\)^2}
\\ &+{e^{W_2}\partial_{\xi_i^j} W_2\over\int\limits_\Omega e^{W_2(x)}dx}-e^{w_2}{\int\limits_\Omega e^{W_2(x)}\partial_{\xi_i^j} W_2(x)dx\over\(\int\limits_\Omega e^{W_2(x)}dx\)^2}
\end{align*}
 and the claim follows by Lemma \ref{zeta}, estimate \eqref{w2} just taking into account that estimate \eqref{w2} holds also for the derivatives, namely
\begin{equation}\label{w2-der}
\partial_{\xi_i^j}W_2(x)={\partial_{\xi_i^j} h(x,\bxi)\over  h(x,\bxi)} +\partial_{\xi_i^j}z(x,\bxi)+O(\la)
\quad \hbox{for any}\ x\in\Omega.
\end{equation}
\end{proof}

We are now in position to estimate the error term, namely,
\begin{equation}\label{erre}
      R:=     {\Pi^\perp}\(     {i^*_p}\(      F (      W ) \)-      W\).
\end{equation}

Observe that by \eqref{isp} we get
$$\| {     R}\|\le C \| \tilde{R}\|_{p},$$
with
\begin{equation} \label{Rtilde} \tilde{R}= -\Delta W + F(W). \end{equation}

Next lemma is devoted to the estimate of the above term.

\begin{lemma}\label{error}
  Let $\mathcal C \subset \mathcal F_k \Omega $ be a fixed compact set. For any $p\in(1,2)$ there exists $\lambda_0>0$ and $C>0$ such that for any $\la \in(0, \la_0)$, for any $\bxi\in\mathcal C$
it holds
$$
\| {     \tilde{R}}\|\le C \lambda^{2-p\over 2p}, \ \ \ \| \nabla_{\bxi}  \tilde{R} \| \le C \lambda^{1-p\over p}.
$$
As a consequence, for any fixed $\e>0$,

$$\|R\|\leq C\la^{\frac12-\e}$$
\end{lemma}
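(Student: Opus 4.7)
The plan is to observe that the residual $\tilde R$ splits as an explicit linear combination of the two quantities $E$ and $E_0$ already estimated in Lemma~\ref{stima-E}, which reduces everything to bookkeeping plus the continuity estimate \eqref{isp}. Writing $W_1 = \sum_i Pw_i - \tfrac12 z(\cdot,\bxi)$ and $W_2 = -\tfrac12 \sum_i Pw_i + z(\cdot,\bxi)$, I would use $-\Delta Pw_i = e^{w_i}$ in $\Omega$ to compute
\begin{equation*}
-\Delta W_1 = \sum_{i=1}^k e^{w_i} + \tfrac12 \Delta z(\cdot,\bxi), \qquad -\Delta W_2 = -\tfrac12 \sum_{i=1}^k e^{w_i} - \Delta z(\cdot,\bxi).
\end{equation*}
Substituting these into the definition of $F(W)$ and regrouping the resulting expression yields the clean identities
\begin{equation*}
\tilde R_1 = E + \tfrac12 E_0, \qquad \tilde R_2 = -\tfrac12 E - E_0,
\end{equation*}
with $E = \sum_i e^{w_i} - 2\lambda e^{W_1}$ and $E_0 = \Delta z(\cdot,\bxi) + 2\rho_2 e^{W_2}/\int_\Omega e^{W_2}$ as in Lemma~\ref{stima-E}. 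Indeed, the first component is $(\sum_i e^{w_i} - 2\lambda e^{W_1}) + \tfrac12(\Delta z + 2\rho_2 e^{W_2}/\int e^{W_2}) = E + \tfrac12 E_0$, and the second splits analogously with opposite weights.

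From these identities the $L^p$ bounds on $\tilde R$ and $\nabla_\bxi \tilde R$ follow at once from Lemma~\ref{stima-E}: for $p \in (1,2)$,
\begin{equation*}
\|\tilde R\|_p \le \tfrac32 \|E\|_p + \tfrac32 \|E_0\|_p \le C\bigl(\lambda^{(2-p)/(2p)} + \lambda \bigr) \le C\lambda^{(2-p)/(2p)},
\end{equation*}
since $(2-p)/(2p) < 1$ on that range. Differentiating the identity in $\bxi$ and invoking the derivative estimates of Lemma~\ref{stima-E} gives $\|\nabla_\bxi \tilde R\|_p \le C\lambda^{(1-p)/p}$, noting that $\lambda^{(1-p)/p} \ge 1 \ge \lambda$ for small $\lambda$. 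The bound for $R$ itself then follows from $R = \Pi^\perp(i_p^*(F(W)) - W) = \Pi^\perp\, i_p^*(\pm\tilde R)$ together with \eqref{isp} and the contractivity of the orthogonal projection $\Pi^\perp$:
\begin{equation*}
\|R\| \le c_p \|\tilde R\|_p \le C\lambda^{1/p - 1/2}.
\end{equation*}
Given $\varepsilon>0$, picking $p = 1+\delta$ with $\delta>0$ sufficiently small makes $1/p - 1/2 > 1/2 - \varepsilon$, which yields the final claim.

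The only step in this lemma requiring any thought is the algebraic identification $\tilde R = (E + \tfrac12 E_0,\, -\tfrac12 E - E_0)$, which in turn is forced by two design choices made in the ansatz \eqref{ans}: the coefficients $\pm\tfrac12$ in $W_1,W_2$, and the use of the \emph{exact} solution $z(\cdot,\bxi)$ of \eqref{ome} rather than an approximation. Indeed, if $z$ were replaced by an approximate solution, $E_0$ would acquire a non-negligible leading term that would spoil the clean split. The real analytic work is concentrated in Lemma~\ref{stima-E}—in particular the scaling estimate for $E$ around each bubble, which depends crucially on the calibration \eqref{de} of the parameters $\delta_i$—so once that lemma is in hand, the present statement is essentially mechanical.
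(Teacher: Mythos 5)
Your proof is correct and follows essentially the same route as the paper's: the identical algebraic split $\tilde R_1=E+\tfrac12 E_0$, $\tilde R_2=-\tfrac12 E-E_0$, followed by Lemma \ref{stima-E}, the bound \eqref{isp}, and the choice of $p$ close to $1$. (Minor remark: your identities are consistent with the convention $\tilde R=-\Delta W-F(W)$, which is what the paper actually uses in its proof despite the sign in \eqref{Rtilde}; this has no effect on the norm estimates.)
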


\begin{proof}
Observe that $\tilde{R}=(\tilde{R}_1, \ \tilde{R}_2)$, where

\begin{align*} \tilde{R}_1(x):=-\Delta W_1(x)-2\la e^{ W_1(x)} +\rho\frac{e^{ W_2(x)}}{\int\limits_\Omega   e^{W_2(x)}dx}, \\
\tilde{R}_2(x):=-\Delta W_2(x)-2\rho_2\frac{e^{ W_2(x)}}{\int\limits_\Omega  e^{W_2(x)}dx} +\la  e^{ W_1(x)} .\end{align*}
We remark that
\begin{align*}
\tilde{R}_1(x)&=-\Delta \(\sum_iP w_i(x) -{1\over2}z(x,\bxi)\)-2\la e^{ W_1(x)} +\rho{e^{W_2(x)}\over \int\limits_\Omega e^{W_2(x)}dx}\\
&=\underbrace{\sum_ie^{ w_i(x)}-2\la e^{  W_1(x)}}_{E (x)} +\underbrace{{1\over2}\Delta  z (x,\bxi) + \rho{e^{W_2(x)}\over \int\limits_\Omega e^{W_2(x)}dx}}_{\frac12 E_0(x)} .\\
\end{align*}
and
\begin{align*}
\tilde{R}_2(x)&=-\Delta \(-{1\over2}\sum_iP w_i(x) +z(x,\xi) \) -2\rho_2{e^{W_2(x)}\over \int\limits_\Omega e^{W_2(x)}dx})+\la e^{ W_1(x)}\\
&=-\underbrace{\sum_i\frac12 e^{ w_i(x)}-\la e^{  W_1(x)}}_{\frac12 E (x)} -\underbrace{\Delta  z (x,\bxi) +2 \rho{e^{W_2(x)}\over \int\limits_\Omega e^{W_2(x)}dx}}_{E_0(x)} .\\
\end{align*}
We now conclude by Lemma \ref{stima-E}.

\end{proof}

\subsection{Analysis of the linearized operator}

Let us consider the following linear problem:  given $\xi\in\Omega$ and  $ h_1,h_2\in H^1_0(\Omega)$, find a function
$\phi$ and constants $c_1,\, c_2$ satisfying
\begin{equation}\label{lla0}\left\{\begin{aligned}&
-\Delta \phi_1  - \sum\limits_{i=1}^k e^{w_i}\phi_1   +\rho\[{e^{W_2}\phi_2\over \int\limits_\Omega e^{W_2}dx}-
{e^{W_2} \int\limits_\Omega e^{W_2} \phi_2dx\over\( \int\limits_\Omega e^{W_2}dx \)^2}  \]=-\Delta h_1
{+\sum\limits_{j=1,2\atop i=1,\dots,k} c_{ij}Z^j_{i} e^{w_{i}}},\\
 &-\Delta \phi_2  - 2\rho_2\[{e^{W_2}\phi_2\over \int\limits_\Omega e^{W_2}dx}-
{e^{W_2} \int\limits_\Omega e^{W_2} \phi_2dx\over\( \int\limits_\Omega e^{W_2}dx \)^2}  \]   +{1\over2}\sum\limits_{i=1}^k e^{w_i}\phi_1=-\Delta h_2,\\ &
\phi_1,\phi_2\in H^1_0(\Omega),\qquad
\int\limits_\Omega \nabla \phi_1 \nabla PZ^j_{j}dx= 0,\ j=1,2,\ i=1,\dots,k.
\end{aligned}\right.
\end{equation}
In order to solve problem \eqref{lla0}, we need  to establish an a priori estimate. We  first consider an intermediate problem.
\begin{lemma}\label{inv}Let $\mathcal C \subset \mathcal F_k \Omega $ be a fixed compact set. For any $p>1$ there exists $\lambda_0>0$ and $c>0$ such that for any $\la \in(0, \la_0)$, for any $\bxi\in\mathcal C$
and
for any solution
$(\phi_1,\phi_2) $ of
\begin{equation}\label{lla}\left\{\begin{aligned}&
-\Delta \phi_1  - \sum\limits_{h=1}^k e^{w_h}\phi_1   +\rho\[{e^{W_2}\phi_2\over \int\limits_\Omega e^{W_2}dx}-
{e^{W_2} \int\limits_\Omega e^{W_2} \phi_2dx\over\( \int\limits_\Omega e^{W_2}dx \)^2}  \]=-\Delta h_1
,\\
 &-\Delta \phi_2  - 2\rho\[{e^{W_2}\phi_2\over \int\limits_\Omega e^{W_2}dx}-
{e^{W_2} \int\limits_\Omega e^{W_2} \phi_2dx\over\( \int\limits_\Omega e^{W_2}dx \)^2}  \]   +\frac12\sum\limits_{h=1}^k e^{w_h}\phi_1=-\Delta h_2,\\ &
\phi_1,\phi_2\in H^1_0(\Omega),\qquad
\int\limits_\Omega \nabla \phi_1 \nabla PZ^j_{i}dx= 0,\ j=1,2,\ i=1,\dots,k,
\end{aligned}\right.
\end{equation}
 the following holds  $$\|\phi_1\|+\|\phi_2\| \leq c\Big( |\log\la|  \|h_1\| +  \|h_2\| \Big).$$

\end{lemma}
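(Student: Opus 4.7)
The plan is to argue by contradiction. Suppose there exist sequences $\lambda_n \to 0$, $\bxi_n \in \mathcal{C}$ and solutions $(\phi_1^n,\phi_2^n)$ of \eqref{lla} with data $(h_1^n,h_2^n)$ normalized by $\|\phi_1^n\|+\|\phi_2^n\|=1$ but $|\log\lambda_n|\,\|h_1^n\|+\|h_2^n\|\to 0$. Up to a subsequence, $\bxi_n\to\bxi^\infty\in\overline{\mathcal C}$.

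The first step controls $\phi_2^n$ via the second equation. The left-hand side operator acting on $\phi_2^n$ converges to the linearization of \eqref{ome} at $z(\cdot,\bxi^\infty)$, which is invertible by assumption \ref{H}. Hence, for large $n$, this operator is uniformly invertible on $H^1_0(\Omega)$, so that $\|\phi_2^n\|$ is bounded by $\|h_2^n\|$ plus the $H^{-1}$-norm of $\tfrac{1}{2}\sum_i e^{w_i^n}\phi_1^n$. To see that this source is small I would blow up $\phi_1^n$ around each $\xi_i^n$: setting $\tilde\phi_i^n(y)=\phi_1^n(\delta_i^n y+\xi_i^n)$ and using the boundedness of $\|\phi_1^n\|$ together with the local form of the first equation, the sequence $\{\tilde\phi_i^n\}$ is bounded in $\mathrm{H}(\R^2)$, and by Proposition \ref{compact} converges in $\mathrm{L}(\R^2)$ to a solution of $-\Delta\tilde\phi = e^w\tilde\phi$ on $\R^2$. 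Therefore $\tilde\phi_i^\infty=c_i^0 Z^0+c_i^1 Z^1+c_i^2 Z^2$ for suitable constants; the orthogonality conditions $\langle\phi_1^n,PZ^j_i\rangle=0$ with $j=1,2$ transfer to the limit and force $c_i^1=c_i^2=0$. A direct computation gives $\int_{\R^2}e^w Z^0\,dy=0$, so the concentrating source $\sum_i e^{w_i^n}\phi_1^n$ tends to zero in $H^{-1}$; combined with $\|h_2^n\|\to 0$ this yields $\|\phi_2^n\|\to 0$.

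Next I would show $\|\phi_1^n\|\to 0$, which will contradict the normalization. With the coupling to $\phi_2^n$ now negligible, $\phi_1^n$ essentially solves the linearized scalar mean-field problem around the $k$ bubbles. I would decompose
\begin{equation*}
\phi_1^n=\sum_{i=1}^k a_i^n PZ^0_i+\phi_{1,\perp}^n,
\end{equation*}
where $\phi_{1,\perp}^n$ is $H^1_0$-orthogonal to all $PZ^j_i$ for $j=0,1,2$. The standard machinery (see \cite{baraket, egpistoia}) gives $\|\phi_{1,\perp}^n\|\leq C(\|h_1^n\|+o(1))$. For the scalar coefficients $a_i^n$, I would test the first equation against $PZ^0_i$: the key identity $\int_{\R^2}e^w Z^0=0$ makes the diagonal entry of the associated quadratic form of order $1/|\log\lambda_n|$, while $|\langle h_1^n,PZ^0_i\rangle|=O(\|h_1^n\|)$; this yields $a_i^n=O(|\log\lambda_n|\,\|h_1^n\|)+o(1)$. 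Combining these bounds with $\|\phi_1^n\|=1-o(1)$ produces the desired contradiction.

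The main obstacle is the estimate of the coefficients $a_i^n$. Since orthogonality to $PZ^0_i$ is not imposed in the statement, these coefficients are not killed by the projection scheme; rather, $PZ^0_i$ acts as an \emph{approximate} kernel element of the linearized operator whose associated ``eigenvalue'' is of order $1/|\log\lambda|$, and this is precisely the origin of the logarithmic loss on $\|h_1\|$. Tracking it requires sharp expansions of $\|PZ^0_i\|^2$, of the cross terms $\langle PZ^0_i,PZ^0_\ell\rangle$ for $i\neq\ell$, and of $\int_\Omega e^{w_i^n}(PZ^0_i)^2$, all of which are standard but technical.
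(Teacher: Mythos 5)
Your overall strategy --- contradiction with the normalization $\|\phi_1\|+\|\phi_2\|=1$, blow-up of $\phi_1$ around each $\xi_i$, classification of the limit in $\mathrm{H}(\R^2)$ with the orthogonality conditions killing the $Z^1,Z^2$ directions, and the nondegeneracy assumption \ref{H} to handle $\phi_2$ --- is the same as the paper's. But the two steps you yourself identify as the crux are precisely where the argument as written does not close.

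First, the assertion that $\sum_i e^{w_i}\phi_1\to 0$ in $H^{-1}$ ``because $\int_{\R^2}e^wZ^0\,dy=0$'' is false before you know that $c_i^0=0$. Test against $PZ^0_i$, whose $H^1_0$-norm is bounded by \eqref{pz0}: then $\into e^{w_i}\phi_1\,PZ^0_i\,dx\to c_i^0\int_{\R^2}e^w Z^0(Z^0+1)\,dy=c_i^0\int_{\R^2}e^w(Z^0)^2\,dy$, which is nonzero unless $c_i^0=0$ --- and proving $c_i^0=0$ is the hard part of the lemma, not a consequence of this step. What is actually available here is convergence of the source tested against fixed smooth functions, which identifies the weak limit of $\phi_2$ as a solution of the nondegenerate homogeneous problem; this gives $\phi_2\rightharpoonup 0$ weakly in $H^1_0$ and strongly in $L^q$, which is all the paper claims at this stage and all that is needed afterwards. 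The strong $H^1_0$ smallness of $\phi_2$ you invoke is only recovered at the very end.

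Second, the coefficient estimate cannot be obtained by testing against $PZ^0_i$ alone, and the heuristic behind it is off: a direct computation gives $\|\nabla PZ^0_i\|_2^2-\into e^{w_i}(PZ^0_i)^2\,dx=-\int_{\R^2}e^w\,dy+o(1)=-8\pi+o(1)$, an $O(1)$ quantity, not $O(1/|\log\la|)$; moreover the off-diagonal pairing $\into e^{w_i}\phi_{1,\perp}\,PZ^0_i\,dx$ equals $\int e^w\tilde\phi_{1,\perp}(Z^0+1)\,dy$, and orthogonality kills only the $Z^0$-part, leaving a term that is merely $o(1)$ rather than $O(\|h_1\|)$. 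The paper's Step 2 needs \emph{two} test functions: pairing with $PZ^0_i$ yields the refined rate \eqref{g0}, namely $(\log\la)\int e^w\tilde\phi_1^i\,dy\to 0$ (this is exactly where the $|\log\la|$ weight on $\|h_1\|$ in the hypothesis enters), and pairing with $Pw_i$ then produces the large term $-4\log\de_i\int e^w\tilde\phi_1^i\,dy$, controlled precisely by \eqref{g0}, plus $\gamma_i\int_{\R^2} e^w Z^0\bigl(-2\log(1+|y|^2)\bigr)dy$, whose nonvanishing forces $\gamma_i=0$. Finally, the contradiction is closed not by estimating $\|\phi_1\|$ and $\|\phi_2\|$ separately but by an energy identity exploiting the skew structure of the system (multiply the first equation by $\phi_2$ and the second by $\phi_1$ and subtract, then sum the diagonal pairings to get $1=o(1)$). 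Without the $Pw_i$ test and the rate \eqref{g0}, your estimate of $a_i^n$ does not follow.
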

\begin{proof}
We argue by contradiction. Assume that there exist $p>1,$ sequences $\la:=\la_n\to0,$ $ \bxi:= \bxi_n\to\bxi^*:=(\xi_1^*,\dots,\xi_k^*) \in\mathcal F_k \Omega ,$ $h_\ell:={h_\ell}_n\in H^1_0(\Omega)$ and $\phi_\ell:={\phi_\ell}_n\in \mathrm{W}^{2, 2}(\Omega)$ for $\ell=1,2$,
which solve \eqref{lla}
 and
\begin{equation}\label{inv2}
\|\phi_1\|+\|\phi_2\| =1 \end{equation}
and
\begin{equation}\label{inv3}
   |\log\la |  \|h_1\| +\|h_2\| \to0.\end{equation}
  For any $i=1,\dots,k,$
we define the functions
 $  \tilde \phi_1^i(y):=\phi_1 \(\de _iy+\xi_i\) $ if $y\in \tilde \Omega_i :={\Omega-\xi_i\over \de_i } $ and  $  \tilde \phi_1^i(y):=0$ if $y\in \rr^2\setminus\tilde \Omega_i . $
\\
{\em Step 1: we will show that for any $i=1,\dots,k$
\begin{equation}\label{step1.1}  \tilde \phi_1^i (y)\to \gamma_i  {1-|y|^{2}\over1+|y|^{2}}\ \hbox{  weakly in $\mathrm{H} (\rr^2)$ and strongly in $\mathrm{L} (\rr^2) $,\  for some $ \gamma_i \in\rr $} \end{equation}
where  $\mathrm{H} (\rr^2)$ and $\mathrm{L} (\rr^2) $ are defined in \eqref{ljs}, \eqref{hjs}, and
\begin{equation}\label{step1.2}  \phi_2(x)\to 0 \ \hbox{weakly in $H^1_0(\Omega)$ and strongly in $L^q(\Omega)$ for any $q\ge2.$}\end{equation}
  }

Let us prove \eqref{step1.2}.
Let $\psi_2\in C^\infty_c(\Omega\setminus\{\xi_1^*,\dots,\xi_k^*\}).$ We multiply the second equation in \eqref{lla} by $\psi_2,$   we integrate over $\Omega$ and we get
\begin{align}\label{1.1.1}&\int\limits_\Omega \nabla\phi_2\nabla\psi_2dx  - 2\rho_2\[{\int\limits_\Omega e^{W_2}\phi_2\psi_2dx \over \int\limits_\Omega e^{W_2}dx}-
{\int\limits_\Omega e^{W_2}\psi_2dx \int\limits_\Omega e^{W_2} \phi_2dx\over\( \int\limits_\Omega e^{W_2}dx \)^2}  \]\nonumber \\ &   +\frac12 \sum\limits_h \int\limits_\Omega e^{w_h}\phi_1\psi_2dx=\int\limits_\Omega \nabla h_2\nabla \psi_2dx\end{align}
By \eqref{inv2} we deduce that $\phi_1\to\phi_1^*$ and $\phi_2\to\phi_2^*$  weakly in $H^1_0(\Omega)$ and strongly in $L^q(\Omega)$ for any $q\ge2.$
Then, we use \eqref{inv3}, \eqref{w1} and \eqref{ew2} and passing to the limit in \eqref{1.1.1} we immediately get
$$\int\limits_\Omega \nabla\phi^*_2\nabla\psi_2dx  - 2\rho_2\[{\int\limits_\Omega h(x,\bxi)e^{z(x,\bxi)}\phi^*_2\psi_2dx \over \int\limits_\Omega h(x,\bxi)e^{z(x,\bxi)}dx}-
{\int\limits_\Omega h(x,\bxi)e^{z(x,\bxi)}\psi_2dx \int\limits_\Omega e^{W_2} \phi^*_2dx\over\( \int\limits_\Omega e^{W_2}dx \)^2}  \]=0.$$
Since $\|\phi_2^*\|_{H^1_0(\Omega)}\le 1$ we can conclude that $\phi^*_2\in H^1_0(\Omega)$ solves the problem
$$ -\Delta\phi^*_2  - 2\rho_2\[ { h(x,\xi)e^{z(x,\bxi)}\phi^*_2  \over \int\limits_\Omega h(x,\bxi)e^{z(x,\bxi)}dx}-
{  h(x,\bxi)e^{z(x,\bxi)}  \int\limits_\Omega e^{W_2} \phi^*_2dx\over\( \int\limits_\Omega e^{W_2}dx \)^2}  \]=0\ \hbox{in}\ \Omega$$
and by \eqref{H} we get $\phi^*_2=0$. That proves our claim\\

Let us prove \eqref{step1.1}.
First of all we claim that  each $\tilde \phi_1^i$ is bounded in the space $\mathrm{H} (\rr^2)$. It is immediate to check that
$$\int\limits_{\rr^2}|\nabla \tilde\phi_1^i|^2dy=\int\limits_{\Omega}|\nabla  \phi_1 |^2dx\le1.$$
Next, we multiply  the first equation in  \eqref{lla} by $\phi_1 ,$   we integrate over $\Omega$ and we get
\begin{align*}&\int\limits_\Omega |\nabla\phi_1 |^2dx -\sum\limits_h \int\limits_\Omega e^{w_h(x)} \phi_1  ^2dx  +\rho\[{\int\limits_\Omega e^{W_2}\phi_1\phi_2dx \over \int\limits_\Omega e^{W_2}dx}-
{\int\limits_\Omega e^{W_2}\phi_1dx \int\limits_\Omega e^{W_2} \phi_2dx\over\( \int\limits_\Omega e^{W_2}dx \)^2}  \]\nonumber \\ &   =\int\limits_\Omega  \nabla h_1 \nabla \phi_1dx\end{align*}
which implies for any $i$
\begin{align*}&\int\limits_{\tilde\Omega_i} e^{w (y)} \(\tilde\phi_1^i(y)\)  ^2dy=\int\limits_\Omega e^{w_i(x)} \phi_1  ^2dx\\ &\le
\int\limits_\Omega |\nabla\phi_1 |^2dx   +\rho\[{\int\limits_\Omega e^{W_2}\phi_1\phi_2dx \over \int\limits_\Omega e^{W_2}dx}-
{\int\limits_\Omega e^{W_2}\phi_1dx \int\limits_\Omega e^{W_2} \phi_2dx\over\( \int\limits_\Omega e^{W_2}dx \)^2}  \]-\int\limits_\Omega \nabla h_1\nabla \phi_1dx.\end{align*}

Then, by \eqref{step1.2}, \eqref{inv2}, \eqref{inv3},    \eqref{ew2} and \eqref{ei},    we immediately get
 $$\int\limits_{\rr^2} {1\over 1+|y|^2} (\tilde\phi_1^i)^2(y)  dy \le c$$
 for some positive constant $c.$
 \\ Therefore, by Proposition \ref{compact} we deduce that
 $\tilde \phi_1 ^i \to \tilde\phi_0^i$  weakly in $\mathrm{H} (\rr^2)$ and strongly in $\mathrm{L} (\rr^2) $.
 Now, let $\tilde \psi_i\in C^\infty_c(\rr^2 ).$ Set $\psi_i(x):=\tilde\psi_i\({x-\xi_i\over\de_i}\),$ $x\in\Omega.$ We multiply  the first  equation in \eqref{lla} by $\psi_i,$   we integrate over $\Omega$ and we get
\begin{align}\label{1.1.2}&\int\limits_\Omega \nabla\phi_1 \nabla\psi_idx -\ \int\limits_\Omega e^{w_i}\phi_1 \psi_idx-\sum_{h\not=i } \int\limits_\Omega e^{w_h}\phi_1 \psi_idx\nonumber \\ & +\rho\[{\int\limits_\Omega e^{W_2} \phi_2\psi_idx \over \int\limits_\Omega e^{W_2}dx}-
{\int\limits_\Omega e^{W_2} \psi_idx \int\limits_\Omega e^{W_2} \phi_2dx\over\( \int\limits_\Omega e^{W_2}dx \)^2}  \]     =\int\limits_\Omega  \nabla h_1 \nabla \psi_idx.\end{align}
Now we have that
$$\int\limits_\Omega e^{w_h}\phi_1 \psi_idx=O(\de_h^2),$$
because $\psi_i(x)=0$ if $|x-\xi_i|\ge R\delta_i$ for some $R$ and so $|x-\xi_h|\ge |\xi_h-\xi_i|-|x-\xi_i|\ge r$ for some $r.$
Therefore, by \eqref{step1.2}, \eqref{ew2}, by \eqref{inv3},
 we pass to the limit in \eqref{1.1.2} and we get
$$\int\limits_{\rr^2} \nabla\tilde \phi_0^i\nabla\tilde\psi_idy -  \int\limits_{\rr^2}  e^{w(y)}\tilde\phi_0^i\tilde\psi_idy=0,$$
namely $\tilde \phi_0^i\in H(\rr^2)$ solves the linear problem \eqref{plim}.
On the other hand, by the last equations in \eqref{lla} we deduce
$$
\int\limits_{\rr^2}  \tilde\phi_0^i(y) e^{w(y)} {y_j\over 1+|y|^2}dy=0,\ j=1,2. $$
and so the claim follows.\\

{\em Step 2: we will show that   $\gamma_i=0$
for any $i=1,\dots,k.$  }\\

We multiply  the first equation in \eqref{lla} by $   PZ^0 _i$  (see \eqref{pz0}), we integrate over $\Omega$ and we get
\begin{align}\label{1.2}& \int\limits_\Omega \nabla\phi_1\nabla PZ^0_idx -\sum_h\int\limits_\Omega e^{w_h}\phi_1  PZ^0_idx +\rho\[{\int\limits_\Omega e^{W_2} \phi_2PZ^0_idx \over \int\limits_\Omega e^{W_2}dx}-
{\int\limits_\Omega e^{W_2} PZ^0dx \int\limits_\Omega e^{W_2} \phi_2dx\over\( \int\limits_\Omega e^{W_2}dx \)^2}  \]\nonumber \\ &    =\int\limits_\Omega \nabla  h_1 \nabla PZ^0_idx.\end{align}

Now, by \eqref{step1.2}   we get
$$\int\limits_\Omega \nabla\phi_1 \nabla PZ^0_idx =
\int\limits_{\tilde \Omega_i}   e^{w(y)} \tilde\phi_1^i(y) Z^0 (y)dy  $$
where $Z^0(y):={1-|y|^2\over 1+|y|^2}$
and using also \eqref{pz0}   we get
\begin{align*}
\sum_h\int\limits_\Omega e^{w_h}\phi_1 PZ^0_idx &=\int\limits_\Omega e^{w_i}\phi_1  PZ^0_idx+\sum_{h\not=i}\int\limits_\Omega e^{w_h}\phi_1  PZ^0_idx\nonumber\\ &=\int\limits_{\tilde\Omega_i}  e^{w(y)}\tilde\phi_1^i(y)  \[Z^0(y)+ 1+O(\de^2)\]dy\nonumber\\ &+\sum_{h\not=i}O\(\| e^{w_h}\|_{p} \|\phi_1 \|_{H^1_0(\Omega)}
\|PZ^0_i\|_{\infty}\)\\
&=\int\limits_{\tilde\Omega_i}  e^{w(y)}\tilde\phi_1^i(y)  \[Z^0(y)+ 1 \]dy +O\(\la^{1\over p}\).\end{align*}
Moreover, by \eqref{w2}, \eqref{ew2},  \eqref{step1.2} and \eqref{pz0}, we deduce
$$\rho\[{\int\limits_\Omega e^{W_2} \phi_2PZ^0_idx \over \int\limits_\Omega e^{W_2}dx}-
{\int\limits_\Omega e^{W_2} PZ^0_idx \int\limits_\Omega e^{W_2} \phi_2dx\over\( \int\limits_\Omega e^{W_2}dx \)^2}  \]=O\(\la\)$$
and
$$\int\limits_\Omega  \nabla h_1 \nabla PZ^0_idx=O\(\|h_1\|\|PZ^0_i\|\) =O\(\|h_1\| \) .$$
Therefore, by \eqref{1.2} we immediately deduce
\begin{equation}\label{g0}
\lim\limits_{\la\to0}(\log\la)\int\limits_{\tilde\Omega_i} e^{w(y)} \tilde\phi_1^i(y) dy=0.
\end{equation}
\\
Next, we   multiply   the first equation in  \eqref{lla} by $   Pw_{i} $  (see \eqref{exp}), we integrate over $\Omega$ and we get
\begin{align}\label{1.3}& \int\limits_\Omega \nabla\phi_1\nabla Pw_{i}dx -\sum_h \int\limits_\Omega e^{w_h}\phi_1 Pw_{i}dx +\rho\[{\int\limits_\Omega e^{W_2} \phi_2Pw_{i}dx \over \int\limits_\Omega e^{W_2}dx}-
{\int\limits_\Omega e^{W_2} Pw_{i}dx \int\limits_\Omega e^{W_2} \phi_2dx\over\( \int\limits_\Omega e^{W_2}dx \)^2}  \]\nonumber \\ &    =\int\limits_\Omega \nabla  h_1 \nabla Pw_{i}dx.\end{align}
We have
$$\int\limits_\Omega \nabla\phi_1 \nabla Pw_{i}dx=\int\limits_\Omega   e^{w_{i}(x)}\phi_1 (x)dx=\int\limits_{\tilde\Omega_i} e^{w(y)} \tilde \phi_1^i(y)dy=o(1)$$
because of \eqref{step1.1} and the fact that
\begin{equation}\label{zw}
\int\limits_{\rr^2}e^{w(y)}{1-|y|^2\over 1+|y|^2}dy=0.
\end{equation}
Moreover, we have
\begin{align*}
 &\sum_h \int\limits_\Omega e^{w_h}\phi_1  Pw_idx  = \int\limits_\Omega e^{w_i}\phi_1 (x) Pw_idx+\sum_{h\not=i} \int\limits_\Omega e^{w_h}\phi_1  Pw_idx \ \hbox{(we use \eqref{exp} and \eqref{ei})}
 \\ &=\int\limits_{\tilde\Omega_i} e^{w (y)}\tilde\phi_1^i(y) \(-4\log\de_i-2\log(1+|y|^2)+8\pi H(\xi_i,\xi_i)+O(\de_i|y|)+O(\de_i^2)\)dy\\ & \;\;\;\;+\sum_h\int\limits_{\tilde\Omega_h} e^{w (y)}\tilde\phi_1^h(y) \( 8\pi G(\xi_i,\xi_h)+O(\de_h|y|)+O(\de_h^2)\)dy\\ &\ \;\;\;\;\hbox{(we use \eqref{de}, \eqref{g0}, \eqref{step1.2},   \eqref{zw} and \eqref{step1.1})}\\ &=  \gamma_i\int\limits_{\rr^2} e^{w (y)}{1-|y|^2\over 1+|y|^2} \(-2\log(1+|y|^2)\)dy+o(1)
 \end{align*}
Finally, by \eqref{w2}, \eqref{ew2},  \eqref{step1.2} and \eqref{exp}, we get
$$\rho\[{\int\limits_\Omega e^{W_2} \phi_2Pw_{\delta,\xi}dx \over \int\limits_\Omega e^{W_2}dx}-
{\int\limits_\Omega e^{W_2} Pw_{\delta,\xi}dx \int\limits_\Omega e^{W_2} \phi_2dx\over\( \int\limits_\Omega e^{W_2}dx \)^2}  \]=o(1) $$
and by \eqref{inv3} and Lemma \ref{stime-utili}
we get
$$\int\limits_\Omega \nabla  h_1 \nabla Pw_idx=O\(\|h_1\| \|Pw_{\delta,\xi}\| \)=O\(|\log\la|^{1/2}\|h_1\|  \)=o(1).$$
 Therefore, putting all the previous estimates into \eqref{1.3} we get
 $$\gamma_i\int\limits_{\rr^2} e^{w (y)}{1-|y|^2\over 1+|y|^2} \(-2\log(1+|y|^2)\)dy=0,$$
 which immediately gives $\gamma_i=0$ since a straightforward computation shows that
 $$\int\limits_{\rr^2}  {1-|y|^2\over \(1+|y|^2\)^3}  \log(1+|y|^2) dy\not=0.$$
 That concludes the proof of the second step.\\

 {\em Step 3: we will show that a contradiction arises!}\\

We    multiply the first equation in \eqref{lla} by $   \phi_2 $, the second equation in \eqref{lla} by $   \phi_1 $, we subtract the two equations, we integrate over $\Omega$ and we get
\begin{align}\label{1.4}  &
 - \sum_h \int\limits_\Omega e^{w_h}\phi_1 \phi_2dx  + \rho\[{\int\limits_\Omega e^{W_2} \phi_2^2 dx \over \int\limits_\Omega e^{W_2}dx}-
  {\(\int\limits_\Omega e^{W_2} \phi_2 dx\)^2\over\( \int\limits_\Omega e^{W_2}dx \)^2}  \]\nonumber \\ &    +2\rho_2\[{\int\limits_\Omega e^{W_2}\phi_1\phi_2 dx \over \int\limits_\Omega e^{W_2}dx}-
  {\int\limits_\Omega e^{W_2} \phi_1 dx\int\limits_\Omega e^{W_2} \phi_2 dx\over\( \int\limits_\Omega e^{W_2}dx \)^2}  \]  -\frac12\sum_h\int\limits_\Omega e^{w_h}\phi_1^2  dx \nonumber \\ &    =\int\limits_\Omega  \nabla h_1 \nabla \phi_2dx -\int\limits_\Omega \nabla  h_2 \nabla \phi_1dx.\end{align}

 By  \eqref{step1.2}, \eqref{ew2} and \eqref{inv2}, we deduce
\begin{equation}\label{1.5}\rho\[{\int\limits_\Omega e^{W_2} \phi_2^2 dx \over \int\limits_\Omega e^{W_2}dx}-
  {\(\int\limits_\Omega e^{W_2} \phi_2 dx\)^2\over\( \int\limits_\Omega e^{W_2}dx \)^2}  \]=o(1)\end{equation}
  and
\begin{equation}\label{1.6}2\rho_2\[{\int\limits_\Omega e^{W_2}\phi_1\phi_2 dx \over \int\limits_\Omega e^{W_2}dx}-
  {\int\limits_\Omega e^{W_2} \phi_1 dx\int\limits_\Omega e^{W_2} \phi_2 dx\over\( \int\limits_\Omega e^{W_2}dx \)^2}  \]=o(1).\end{equation}
By \eqref{step1.1}, together with \eqref{ei} and \eqref{inv2}, we deduce
\begin{equation}\label{1.7}2 \sum_h \int\limits_\Omega e^{w_h}\phi_1 ^2dx= \sum \int\limits_{\tilde\Omega_h} e^{w(y)}\(\tilde\phi_1^h(y)\) ^2 dy =o(1).\end{equation}
 By \eqref{inv2} and \eqref{inv3} we deduce
\begin{equation}\label{1.8}\int\limits_\Omega  \nabla h_1 \nabla \phi_1dx=o(1)\ \hbox{and}\ \int\limits_\Omega \nabla  h_2 \nabla \phi_2dx=o(1).\end{equation}
Therefore, by \eqref{1.4} we deduce
\begin{equation}\label{1.9}2 \sum_h \int\limits_\Omega e^{w_h}\phi_1 \phi_2dx =o(1).\end{equation}
Finally, we    multiply the first equation in \eqref{lla} by $   \phi_1 $, the second equation in \eqref{lla} by $   \phi_2 $, we sum the two equations, we integrate over $\Omega$ and we get (taking into account \eqref{inv2})
\begin{align*} 1&=\int\limits_\Omega |\nabla\phi_1|^2dx+\int\limits_\Omega |\nabla\phi_2|^2dx \nonumber\\ &
= \sum_h \int\limits_\Omega e^{w_h}\phi_1 ^2dx  +2\rho_2\[{\int\limits_\Omega e^{W_2}\phi_2^2dx \over \int\limits_\Omega e^{W_2}dx}-
  {\(\int\limits_\Omega e^{W_2} \phi_2 dx\)^2\over\( \int\limits_\Omega e^{W_2}dx \)^2}  \]\nonumber \\ & \;\;\;\;   -\frac12\sum_h\int\limits_\Omega e^{w_h}\phi_1\phi_2dx-\rho\[{\int\limits_\Omega e^{W_2}\phi_1\phi_2 dx \over \int\limits_\Omega e^{W_2}dx}-
{\int\limits_\Omega e^{W_2}\phi_1dx \int\limits_\Omega e^{W_2} \phi_2dx\over\( \int\limits_\Omega e^{W_2}dx \)^2}  \]\nonumber \\ &  \;\;\;\;  +\int\limits_\Omega \nabla  h_1 \nabla \phi_1dx +\int\limits_\Omega  \nabla h_2 \nabla \phi_2dx\\ &=o(1),\end{align*}
because of \eqref{1.5}--\eqref{1.9}, and a contradiction arises!

 \end{proof}
Now we are ready to derive an a priori estimate for problem \eqref{lla0}.

\begin{prop}\label{inv1}
Let $\mathcal C \subset \mathcal F_k \Omega $ be a fixed compact set. For any $p>1$ there exists $\lambda_0>0$ and $c>0$ such that for any $\la \in(0, \la_0)$, for any $\bxi\in\mathcal C$
and for any solution
$(\phi_1,\phi_2 ) $ and $c_{ij},$ $j=1,2$, $i=1,\dots,k$ of \eqref{lla0}, the following holds
 $$\|\phi_1\|+\|\phi_2\| \le c  |\log\la|  \(\|h_1\|  +  \|h_2\| \) .$$
\end{prop}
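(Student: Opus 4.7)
The plan is to reduce Proposition \ref{inv1} to Lemma \ref{inv} by absorbing the Lagrange multiplier terms into the right-hand side, after obtaining suitable bounds on the coefficients $c_{ij}$.

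First, I would observe that since $-\Delta Z^j_i = e^{w_i} Z^j_i$ in $\mathbb{R}^2$ and $PZ^j_i$ is the projection of $Z^j_i$ onto $H^1_0(\Omega)$, we have $-\Delta PZ^j_i = e^{w_i} Z^j_i$ in $\Omega$. Hence the forcing term on the first equation of \eqref{lla0} can be rewritten as
\[
-\Delta h_1 + \sum_{i,j} c_{ij} Z^j_i e^{w_i} = -\Delta \hat h_1, \qquad \hat h_1 := h_1 + \sum_{i,j} c_{ij} PZ^j_i.
\]
The system \eqref{lla0} then becomes exactly \eqref{lla} with $h_1$ replaced by $\hat h_1$. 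Lemma \ref{inv} yields
\[
\|\phi_1\| + \|\phi_2\| \le C\Big(|\log\lambda|\,\|\hat h_1\| + \|h_2\|\Big)
\le C|\log\lambda|\Big(\|h_1\| + \|h_2\| + \sum_{i,j}|c_{ij}|\,\|PZ^j_i\|\Big),
\]
so the estimate $\|PZ^j_i\| = a/\sqrt\lambda + o(1/\sqrt\lambda)$ from Lemma \ref{stime-utili} reduces the problem to showing that $|c_{ij}|$ is at most of order $\sqrt\lambda/|\log\lambda|$ times $(\|h_1\|+\|h_2\|+\|\phi_1\|+\|\phi_2\|)$, with the $\phi$-dependent part being absorbable on the left.

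Second, to estimate $c_{ij}$, I would test the first equation of \eqref{lla0} against $PZ^m_l$ for each pair $(l,m)$. The orthogonality condition on $\phi_1$ kills the term $\int \nabla \phi_1 \nabla PZ^m_l$, so one obtains the linear system
\[
\sum_{i,j} c_{ij}\, A_{(i,j),(l,m)} = B_{(l,m)}, \qquad A_{(i,j),(l,m)}:=\int_\Omega Z^j_i\, e^{w_i}\, PZ^m_l\,dx,
\]
where $B_{(l,m)}$ collects the remaining terms (those from $\sum_h e^{w_h}\phi_1$, the $\rho$-terms involving $\phi_2$, and $\int \nabla h_1 \nabla PZ^m_l$). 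A direct scaling computation (setting $x=\xi_i+\delta_i y$, using $e^{w_i}Z^j_i \sim 8y_j/(\delta_i^3(1+|y|^2)^3)$ and $PZ^j_i = Z^j_i + O(1)$) shows that the diagonal entries satisfy $A_{(i,j),(i,j)} = c_0/\lambda + o(1/\lambda)$ with $c_0>0$, while off-diagonal entries are $o(1/\lambda)$. Hence $A$ is invertible with $\|A^{-1}\| = O(\lambda)$.

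Third, I would bound $B_{(l,m)}$. The dominant $h$-term is $\int e^{w_l}\phi_1 PZ^m_l$, which by scaling equals $\delta_l^{-1}\int_{\tilde\Omega_l} e^w \tilde\phi_1^l \,(y_m/(1+|y|^2)) dy + O(\|\phi_1\|)$, hence of order $\|\phi_1\|/\sqrt\lambda$; the terms with $h\ne l$ are $O(\|\phi_1\|)$ using $\|e^{w_h}\|_\infty = O(\lambda)$ away from $\xi_h$. The $\phi_2$-terms are $O(\|\phi_2\|)$ using \eqref{ew2} and boundedness of $\int h(\cdot,\bxi)e^{z(\cdot,\bxi)}$, while $\int \nabla h_1 \nabla PZ^m_l = O(\|h_1\|/\sqrt\lambda)$. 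Combining,
\[
|B_{(l,m)}| \le C\lambda^{-1/2}\Big(\|\phi_1\|+\|\phi_2\|+\|h_1\|\Big),
\]
whence $|c_{ij}| \le C\sqrt\lambda\,(\|\phi_1\|+\|\phi_2\|+\|h_1\|)$. Inserting this back gives
\[
\|\phi_1\|+\|\phi_2\| \le C|\log\lambda|(\|h_1\|+\|h_2\|) + C|\log\lambda|\sqrt\lambda\,(\|\phi_1\|+\|\phi_2\|),
\]
and for $\lambda$ small the last term is absorbed on the left, concluding the proof.

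I expect the main obstacle to be the sharp estimate of $B_{(l,m)}$ and of the off-diagonal entries of $A$: one must carefully separate the concentration region $B(\xi_i,\eta)$ from the rest of $\Omega$, exploit the scaling $x=\xi_i+\delta_i y$, and control the cross-products using \eqref{ei}, \eqref{pzi}, and the limit behavior of $\tilde\phi_1^i$ established in the proof of Lemma \ref{inv}. The factor $\sqrt\lambda$ appearing in the bound on $|c_{ij}|$ is delicate: a naive bound $|c_{ij}|=O(1)$ would give a contribution $|\log\lambda|/\sqrt\lambda$ to the right-hand side, which would be fatal.
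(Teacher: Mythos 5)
Your overall architecture is the same as the paper's: absorb the multiplier terms into $h_1$ via $-\Delta PZ^j_i=e^{w_i}Z^j_i$ and Lemma \ref{inv}, then estimate the $c_{ij}$ by testing the first equation against $PZ^m_l$ and inverting an almost-diagonal matrix whose diagonal entries are of order $1/\la$. Steps 1 and 2 are correct and match the paper's \eqref{combi1} and \eqref{multi}.

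There is, however, a genuine gap in Step 3, and it is exactly at the point you flag as delicate. You bound the diagonal contribution $\int_\Omega e^{w_l}\phi_1\,PZ^m_l\,dx$ by its naive size $\|\phi_1\|/\sqrt\la$ (via the scaling $\de_l^{-1}\int e^{w}\tilde\phi_1^l\,y_m/(1+|y|^2)\,dy$), obtaining $|B_{(l,m)}|\le C\la^{-1/2}(\|\phi_1\|+\|\phi_2\|+\|h_1\|)$ and hence $|c_{ij}|\le C\sqrt\la\,(\|\phi_1\|+\|\phi_2\|+\|h_1\|)$. Feeding this back into Step 1 produces $C|\log\la|\,\la^{-1/2}\cdot\sqrt\la\,\|\phi\|=C|\log\la|\,\|\phi\|$ on the right-hand side, \emph{not} $C|\log\la|\sqrt\la\,\|\phi\|$ as written in your final display (the factors $\la^{-1/2}$ and $\la^{1/2}$ cancel); such a term cannot be absorbed, so the argument does not close. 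Indeed, your own target from Step 1 was $|c_{ij}|=o(\sqrt\la/|\log\la|)\,\|\phi\|$, and what Step 3 delivers misses it by a factor $|\log\la|$ even before the arithmetic slip.

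The missing ingredient is a cancellation coming from the orthogonality constraint, which you use only to kill $\int\nabla\phi_1\nabla PZ^m_l$ but not in the potential term. Since $-\Delta PZ^m_l=e^{w_l}Z^m_l$, one has
\begin{equation*}
\int_\Omega e^{w_l}\phi_1\,Z^m_l\,dx=\int_\Omega\nabla\phi_1\cdot\nabla PZ^m_l\,dx=0,
\end{equation*}
so the scaled integral you call dominant vanishes identically, and
\begin{equation*}
\int_\Omega e^{w_l}\phi_1\,PZ^m_l\,dx=\int_\Omega e^{w_l}\phi_1\,\bigl(PZ^m_l-Z^m_l\bigr)\,dx
=O\bigl(\|e^{w_l}\|_{q}\,\|\phi_1\|\bigr)=O\bigl(\la^{\frac{1-q}{q}}\|\phi_1\|\bigr),
\end{equation*}
using $PZ^m_l-Z^m_l=O(1)$ from \eqref{pzi}. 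This is the estimate the paper exploits in \eqref{multi}--\eqref{combi}: it yields $|c_{ij}|=O\bigl(\la^{1/q}\|\phi_1\|+\la^{1/2}\|h_1\|+\la\|\phi_2\|\bigr)$, so that the feedback term is $|\log\la|\,\la^{1/q-1/2}\|\phi_1\|=o(\|\phi_1\|)$ for $q$ close to $1$ and the absorption goes through. With this correction (and the corresponding repair of the final display), your proof coincides with the paper's.
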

\begin{proof}
For any $q\ge1$ we have
\beq\label{cii}\|PZ_i^je^{w_i}\|_{q}=
O\(\la^{\frac{2-3q}{2q}}\)\ \hbox{and} \ \|PZ_h^je^{w_i}\|_{q}
=O\(\la^{\frac{1- q}{q}}\)\hbox{ for }h\neq i
\eeq
and
$$ \|PZ_i^j\|_{2}=O(1).$$
Lemma \ref{inv}  combined with Lemma \ref{stime-utili} yields
 \begin{equation}\label{combi1}\|\phi_1\| + \|\phi_2\|  =O\( |\log\la| \( \|h_1\| + \|h_2\| +\la^{-\frac12}\sum\limits_{j=1,2\atop i=1,\dots,k} |c_{ij}| \)\).\end{equation}
Hence it suffices to estimate the values of the constants $c_{ij}$.
 We multiply the first equation of \eqref{lla0} by $PZ_i^j$ and, using again Lemma \ref{stime-utili}, we find
\begin{equation}\label{multi}\begin{aligned}& \int_\Omega \phi_1e^{w_i} Z_{i}^jdx-\sum_h\int_\Omega e^{w_h}\phi_1PZ_{i}^j+O\bigg(\int_{\Omega}|\phi_2||PZ_i^j|dx +\int_{\Omega}|\phi_2|\int_\Omega|PZ_i^j|dx \bigg) \\ &=\int_\Omega   \nabla h_1 \nabla  PZ_i^jdx +c_{ij}\int_\Omega  Z_i^jP Z_i^je^{w_i}dx+ \sum\limits_{\ell\not=j\atop \kappa\not= i} o\Big(\frac{|c_{\ell\kappa}|}{\la}\Big)\end{aligned}\end{equation}
Let us fix $q\in (1,+\infty)$ sufficiently close to 1. By using   \eqref{pzi}, the first part in \eqref{multi} can be estimated as
\begin{align*}
&\int_\Omega \phi_1e^{w_{i}}\(Z_i^j-PZ_{i}^j\)dx-\sum_{h\not=i}\int_\Omega e^{w_h}\phi_1PZ_{i}^j \\ &=O\(
\|  e^{w_i}\|_{q)}
\|\phi_1\| \)+\sum_{h\not=i} O\(\|  e^{w_h} PZ_{i}^j\|_{q}
\|\phi_1\| \)\\ &=O\( \la^{1-q\over q}\|\phi_1\|\).\end{align*} Furthermore
$$\int_{\Omega}|\phi_2||PZ_i^j|dx+\int_{\Omega}|\phi_2|dx\int_\Omega|PZ_i^j|dx=O\(\|\phi_2\|\|PZ_i^j\|_{2}\)=O\(\|\phi_2\|\).$$
Next we examine the right hand side and by Lemma \ref{stime-utili} we deduce
$$\int_\Omega \nabla  h_1 \nabla PZ_i^jdx =O\( \|h_1\| \|PZ_i^j\| \)= O\(\|h_1\|_p\la^{-\frac12}\).$$
By inserting the above estimates into \eqref{multi} and recalling \eqref{der2} we get
\begin{equation}\label{combi}|c_{ij}|+o\(\sum\limits_{\ell\not=j\atop k\not= i}|c_{\ell k}|\)=O\( \la^{\frac{1}{q}}\|\phi_1\|+\la^{1\over2 }\|h_1\| +\la \|\phi_2\|\).
\end{equation}
We sum \eqref{combi} over all the indices $j=1,2$ and $i=1,\dots,k$ and we get
\begin{equation}\label{combi2}\sum\limits_{ j=1,2\atop i=1,\dots,k}|c_{ij}|=O\( \la^{\frac{1}{q}}\|\phi_1\|+\la^{1\over2}\|h_1\| +\la \|\phi_2\|\).
\end{equation}
Combining \eqref{combi2} with \eqref{combi1} we get the thesis provided that we choose $q$ sufficiently close to $1$.
\end{proof}

Once that a priori estimate has been obtained, we are in the position to prove the solvability result.
We denote by $      L:      {K^\perp}\to      {K^\perp}$ the linear operator defined by
\begin{equation}\label{elle}
      L(     \phi):=     \phi-     {\Pi^\perp}\(     {i^*_p}\(      M(      W)      \phi \)\), \quad \phi=(\phi_1,\phi_2)
\end{equation}
where
\beq\label{effe}
      M(      W)(     \phi_1,\phi_2):=   \( \begin{aligned}    \sum\limits_{h=1}^k e^{w_h}\phi_1   -\rho\[{e^{W_2}\phi_2\over \int\limits_\Omega e^{W_2}dx}-
{e^{W_2} \int\limits_\Omega e^{W_2} \phi_2dx\over\( \int\limits_\Omega e^{W_2}dx \)^2}\]  \\ - \frac12\sum\limits_{h=1}^k e^{w_h}\phi_1   +2\rho_2\[{e^{W_2}\phi_2\over \int\limits_\Omega e^{W_2}dx}-
{e^{W_2} \int\limits_\Omega e^{W_2} \phi_2dx\over\( \int\limits_\Omega e^{W_2}dx \)^2}  \]
\end{aligned}\)\eeq

Notice  that  problem   \eqref{lla0} can be written in the operator form

\beq\label{fre}\phi-\Pi^{\perp}(i^*_p(M(W) \phi))=\Pi^{\perp}(i^*_p(h)),\qquad \phi=(\phi_1,\phi_2).\eeq
\begin{prop}\label{ex} Let $\mathcal C \subset \mathcal F_k \Omega $ be a fixed compact set. For any $p>1$ there exists $\lambda_0>0$ and $c>0$ such that for any $\la \in(0, \la_0)$, for any $\bxi\in\mathcal C$,  for any $h\in K^\perp$ there is a unique solution to the problem \beq\label{fre1}L(\phi)=h.\eeq
Moreover  $$\|\phi\| \leq C |\log \lambda | \|L(\phi)\|\quad \forall \phi\in K^{\perp}.$$

\end{prop}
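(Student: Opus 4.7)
The strategy is the classical Fredholm alternative combined with the a priori estimate of Proposition \ref{inv1}. First I would observe that the operator $T:K^\perp\to K^\perp$ defined by $T(\phi):=\Pi^\perp\bigl(i^*_p(M(W)\phi)\bigr)$ is compact. Indeed, for any $\phi=(\phi_1,\phi_2)\in H^1_0(\Omega)\times H^1_0(\Omega)$ and any $p>1$ sufficiently close to $1$, the Moser--Trudinger inequality (Lemma \ref{tmt}) together with the uniform bounds on $W_1,W_2$ on compact sets of $\bxi$ shows that $e^{w_h},\ e^{W_2}\in L^{q}(\Omega)$ for any $q\ge 1$, so that $M(W)\phi\in L^p(\Omega)\times L^p(\Omega)$ with a bound depending only on $\|\phi\|$. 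Since $i^*_p:L^p\to H^1_0$ is continuous and the embedding $H^1_0\hookrightarrow L^{p/(p-1)}$ is compact, the operator $T$ is the composition of a bounded operator with a compact one, hence compact. Therefore $L=I-T$ is Fredholm of index zero.

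Next I would note that Proposition \ref{inv1} immediately gives the a priori estimate once we identify \eqref{fre1} with \eqref{lla0}. If $\phi\in K^\perp$ satisfies $L(\phi)=h\in K^\perp$, then by definition of $\Pi^\perp$ there exist constants $c_{ij}$ such that
\begin{equation*}
\phi-i^*_p(M(W)\phi)-h\in K,
\end{equation*}
which is precisely the operator form of system \eqref{lla0} with right-hand side $h=(h_1,h_2)$ and Lagrange multipliers $c_{ij}$ coming from the projection onto $K$. Applying Proposition \ref{inv1} yields
\begin{equation*}
\|\phi_1\|+\|\phi_2\|\le c\,|\log\lambda|\bigl(\|h_1\|+\|h_2\|\bigr),
\end{equation*}
which is the desired norm bound $\|\phi\|\le C|\log\lambda|\,\|L(\phi)\|$.

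Finally, injectivity of $L$ follows from the same estimate applied to $h=0$: any $\phi\in \ker L$ satisfies $\|\phi\|=0$. Since $L$ is Fredholm of index zero and injective, it is bijective, so for every $h\in K^\perp$ there exists a unique $\phi\in K^\perp$ with $L(\phi)=h$.

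The only non-routine step is the compactness of $T$ via Moser--Trudinger, and the identification of the abstract equation $L(\phi)=h$ with the pointwise PDE system \eqref{lla0}; both are standard in this finite-dimensional reduction setting, the actual analytic content having already been invested in Proposition \ref{inv1}.
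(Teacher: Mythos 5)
Your proposal is correct and follows essentially the same route as the paper: compactness of $\phi\mapsto\Pi^\perp(i^*_p(M(W)\phi))$ gives that $L$ is Fredholm of index zero, injectivity follows from the a priori estimate of Proposition \ref{inv1} applied with $h=0$, and the norm bound is again Proposition \ref{inv1}. The extra detail you supply on why the operator is compact (via the compactness of $i^*_p$, dual to the compact Sobolev embedding) is a worthwhile elaboration of a step the paper only asserts.
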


\begin{proof} The existence follows from Fredholm's
alternative.
Indeed the operator $$\phi\mapsto \Pi^{\perp}(i^*_p(M(W) \phi))$$ is a compact operator in $K^\perp$.
   Using Fredholm's alternatives,  \eqref{fre} has a unique  solution for each $ h\in K^\perp$, if and only if \eqref{fre}  has a unique solution
for $h=0$. Let   $\phi\in K^\perp$ be a
solution of $\phi-\Pi^{\perp}(i^*_p(M(W) \phi))=0$; then  $\phi$ solves
the system \eqref{lla0} with $h=0$ for some
$c_1,\,c_2\in\R$. Proposition \ref{inv1} implies $\phi\equiv 0.$

Once we have existence, the norm estimates follows directly from Proposition \ref{inv1}.
\end{proof}

\subsection{The nonlinear problem} We recall that our goal is to solve problem \eqref{equ1}.
In what follows we denote by
 $      N(     \phi)$ the nonlinear operator
\begin{equation}\label{enne}
      N(     \phi):=     {\Pi^\perp}\(     {i^*_p}\(      F (      W+     \phi)-      F (      W)-      F' (      W)     \phi \)\).
\end{equation}
and by $S(\phi)$ the linear operator (see \eqref{effe})
\begin{equation}\label{esse}
      S(     \phi):=     {\Pi^\perp}\(     {i^*_p}\( F'(W)\phi-M(W)\phi\)\).
\end{equation}
Therefore, equation \eqref{equ1} turns out to be equivalent to the problem
\beq\label{goal}     L(     \phi)=      N(     \phi)+  S(\phi)   + R,\eeq where $R$ is the error term defined in \eqref{erre}.
We need the following   auxiliary lemmas.
\begin{lemma}\label{aux0}Let $\mathcal C \subset \mathcal F_k \Omega $ be a fixed compact set. For any $p>1$ there exists $\lambda_0>0$ and $c>0$ such that for any $\la \in(0, \la_0)$, for any $\bxi\in\mathcal C$  and any $\phi ,\psi \in H^1_0(\Omega)\times H^1_0(\Omega)  $
$$\|S(\phi)-S(\psi)\|\le c\la^{\frac{2-p}{2p}}\|\phi-\psi\|.$$
\end{lemma}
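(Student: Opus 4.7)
The plan is to first simplify the operator $F'(W)-M(W)$ by an algebraic cancellation, and then reduce the estimate to a product $L^p$-bound that can be controlled by Lemma \ref{stima-E} together with Moser--Trudinger.

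Starting from the definitions of $F$ given at the beginning of Section 3 and of $M(W)$ in \eqref{effe}, the contributions coming from the linearization of $g(u_2)=e^{u_2}/\int_\Omega e^{u_2}$ at $W_2$ appear with exactly the same coefficients in $F'(W)\phi$ and in $M(W)\phi$ (namely $-\rho_2$ in the first row and $2\rho_2$ in the second), and they cancel. A direct computation then gives
\[
F'(W)\phi - M(W)\phi \;=\; \bigl(-E\,\phi_1,\; \tfrac12 E\,\phi_1\bigr),
\]
where $E=\sum_{h=1}^{k}e^{w_h}-2\lambda e^{W_1}$ is precisely the error function estimated in Lemma \ref{stima-E}. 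In particular, the $\phi_2$-component of $S(\phi)-S(\psi)$ does not appear at all.

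Since $S$ is linear, $S(\phi)-S(\psi)=S(\phi-\psi)$. Combining the uniform bound on $\Pi^\perp$ with the continuity \eqref{isp} of $i^*_p$, and the vector convention $\|(v_1,v_2)\|_p=\|v_1\|_p+\|v_2\|_p$, the step above yields
\[
\|S(\phi)-S(\psi)\| \;\le\; C\,\|E\,(\phi_1-\psi_1)\|_p .
\]
To estimate the right-hand side, I would apply H\"older's inequality with exponents $\tfrac1a+\tfrac1b=\tfrac1p$, choosing $a>p$ close to $p$ (so that $b<\infty$):
\[
\|E(\phi_1-\psi_1)\|_p \;\le\; \|E\|_a\,\|\phi_1-\psi_1\|_b \;\le\; C\,\lambda^{(2-a)/(2a)}\,\|\phi-\psi\|,
\]
where the first factor uses Lemma \ref{stima-E} and the second uses the embedding $H^1_0(\Omega)\hookrightarrow L^b(\Omega)$, which is a standard consequence of the Moser--Trudinger inequality (Lemma \ref{tmt}). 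Taking $a$ close enough to $p$ (equivalently, replacing $p$ by a slightly smaller value at the outset) gives the advertised exponent $(2-p)/(2p)$.

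The main step is really the cancellation identity $F'(W)\phi-M(W)\phi=(-E\phi_1,\tfrac12 E\phi_1)$, which reduces the problem to quantities already handled in the previous section. The only mild delicacy is the book-keeping of H\"older exponents, since $(2-p)/(2p)$ is reached only in the limit $a\to p^+$; this poses no real obstacle because for $p\in(1,2)$ the resulting exponent is strictly positive, which is exactly what is needed to run the contraction argument for \eqref{goal} in the next subsection.
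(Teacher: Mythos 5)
Your proof is correct and follows essentially the same route as the paper's: the paper likewise computes $S(\phi)$ explicitly as $\Pi^\perp\big(i^*_p(\cdot)\big)$ applied to multiples of $E\,\phi_1$ in the two components and then invokes the bound $\|E\|_p=O(\lambda^{(2-p)/(2p)})$ of Lemma \ref{stima-E}. You in fact supply two details the paper glosses over — the paper's displayed formula reads $\lambda e^{W_1}-\sum_h e^{w_h}$ where the correct quantity is $2\lambda e^{W_1}-\sum_h e^{w_h}=-E$, exactly your identity, and your H\"older step (measuring the product in $L^{\tilde p}$ for $\tilde p$ slightly below $p$ so as to pair $\|E\|_p$ with a finite Sobolev exponent and retain the stated power $\lambda^{(2-p)/(2p)}$) makes precise the paper's ``the claim immediately follows.''
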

\begin{proof} A direct computation shows that   $S(\phi)$ reduces to
$$S(\phi):={\Pi^\perp}\(     {i^*_p}\(  \( \la e^{W_1}-\sum\limits_h e^{w_h}\)\phi_1,-\frac12 \( \la e^{W_1}-\sum\limits_h e^{w_h}\)\phi_1\)\) $$
and then the claim  immediately follows by \eqref{E}.
\end{proof}

\begin{lemma}\label{aux}Let $\mathcal C \subset \mathcal F_k \Omega $ be a fixed compact set. For any $p\geq1$ and $q>1$  there exists $\lambda_0>0$ and $c>0$ such that for any $\la \in(0, \la_0)$, for any $\bxi\in\mathcal C$   and any $u\in H^1_0(\Omega)\times H^1_0(\Omega) $ with $\|u\|\le r_0$:

\begin{enumerate}

\item[a)] $\|\la f'(W_1+u_1) \, v\|_p\leq C\la^{\frac{1-pq}{pq}}e^{c r_0^2}\|v\| \quad \forall v \in H^1_0(\Omega),$

\item[b)] $\|g'(W_2+u_2)\, v \|_p \leq C e^{c r_0^2} \|v\| \quad \forall v \in H^1_0(\Omega).$

\item[c)] $\|\la f''(W_1+u_1)\, v\, z\|_p\leq C\la^{\frac{1-pq}{pq}}e^{c r_0^2}\|v\|\|z\|\quad \forall v, \ z\in H^1_0(\Omega),$

\item[d)] $\|g''(W_2+u_2)\, v\, z\|_p\leq Ce^{c r_0^2}\|v\|\|z\|\quad \forall v, \ z\in H^1_0(\Omega).$
\end{enumerate} \end{lemma}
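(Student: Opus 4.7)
The plan is to observe that since $f(u)=e^u$, we have $f'(u)=f''(u)=e^u$, so parts (a) and (c) are essentially the same estimate with one more factor; similarly, since $g(u)=e^u/\int e^u$, one computes explicitly
\[
g'(u)v=\frac{e^u v}{\int_\Omega e^u}-\frac{e^u\int_\Omega e^u v}{(\int_\Omega e^u)^2},
\]
and $g''(u)(v,z)$ has five analogous quotient terms. Each term is a product whose $L^p$ norm will be split by H\"older and then bounded via Moser--Trudinger and Sobolev embedding. The key quantitative ingredients are: (i) the bubble estimate $\|\lambda e^{W_1}\|_r=O(\lambda^{(1-r)/r})$ for every $r>1$, which is exactly the computation carried out in the proof of Lemma~\ref{stima-E} (after rescaling $x=\delta_i y+\xi_i$ and using $\lambda d_i(\bxi)=4\delta_i^2$); (ii) the uniform bound $\|e^{W_2}\|_\infty\le C$, which follows from \eqref{w2} and the fact that $h(x,\bxi)$ is bounded; (iii) the Moser--Trudinger inequality in the form $\|e^u\|_r\le C e^{cr_0^2}$ for $\|u\|\le r_0$ and any fixed $r\ge 1$ (Lemma~\ref{tmt}); and (iv) Sobolev embedding $H_0^1(\Omega)\hookrightarrow L^r(\Omega)$ for every $r\ge 1$.

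For part (a) I would apply H\"older with exponents $pq$ and $pq/(q-1)$:
\[
\|\lambda e^{W_1+u_1}v\|_p\le \|\lambda e^{W_1}\|_{pq}\,\|e^{u_1}v\|_{pq/(q-1)},
\]
bound the first factor by (i) as $C\lambda^{(1-pq)/(pq)}$, split the second with a further H\"older $\|e^{u_1}v\|_{pq/(q-1)}\le\|e^{u_1}\|_{2pq/(q-1)}\|v\|_{2pq/(q-1)}$, and use (iii) and (iv). Part (c) is identical with an extra H\"older factor to separate $z$. For part (b), I first obtain a uniform lower bound $\int_\Omega e^{W_2+u_2}\ge c>0$ by Jensen's inequality (the averages of $W_2$ and $u_2$ are controlled by the assumption $\|u_2\|\le r_0$ and the explicit form of $W_2$), and an upper bound via (ii)+(iii). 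With these two-sided bounds on the denominator, each summand of $g'(W_2+u_2)v$ is estimated by pulling out $\|e^{W_2}\|_\infty$ and applying H\"older, (iii), (iv) to handle $e^{u_2}$ and $v$; part (d) is analogous for the five terms of the second-order derivative.

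The main (minor) obstacle is the bookkeeping of H\"older exponents in part (a)/(c): one has to leave enough room between $p$ and $pq$ so that the weight $\lambda^{(1-pq)/(pq)}$ comes out exactly, while keeping the remaining exponents large enough for Sobolev and finite for Moser--Trudinger; choosing $q>1$ arbitrary and working with the H\"older pair $(pq,\ pq/(q-1))$ does this cleanly. Everything else is routine.
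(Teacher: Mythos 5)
Your proposal is correct and follows essentially the same route as the paper: H\"older splitting of each factor, the decay rate $\|\lambda e^{W_1}\|_{pq}=O(\lambda^{(1-pq)/(pq)})$ extracted from the bubble computation of Lemma~\ref{stima-E}, Moser--Trudinger for $e^{u_i}$, Sobolev embedding for $v,z$, and a two-sided bound on $\int_\Omega e^{W_2+u_2}$ for the $g$ terms. The only (harmless) deviation is that you obtain the lower bound on the denominator via Jensen's inequality rather than the paper's perturbative estimate \eqref{l1}, and you bound $e^{W_2}$ in $L^\infty$ instead of in $L^{p\alpha}$; both variants work.
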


\begin{proof} We give the complete proof for inequalities c), d), the others being easier. We point out that by  H\"older's inequality  with ${1\over q} +{1\over r}+{1\over s}+{1\over t} =1$,

$$\begin{aligned}
\left\| \la f''\(W_1+u_1\)v_1z_1\right\| _p &\le \left\| \la e^{W_1 } \right\| _{pq}\left\|   e^{ u_1} \right\| _{pr} \left\|v_1 \right\| _{ps}\left\| z_1 \right\| _{pt}\nonumber\\ &\hbox{(we use the continuity of $H^1_0(\Omega)\hookrightarrow L^p(\Omega)$)}\nonumber\\ &\le c \left\| \la e^{W_1 } \right\| _{pq}\left\|   e^{ u_1} \right\| _{pr}\left\|v_1 \right\|  \left\| z_1 \right\|\nonumber\\ &\hbox{(we use Lemma \ref{tmt})}\nonumber\\ &\le c \left\| \la e^{W_1 } \right\| _{pq}  e^{ \frac {pr}{16\pi}\|u_1\|^2} \left\|v_1 \right\|  \left\| z_1 \right\|
\nonumber\\ &\hbox{(we use  Lemma \ref{stima-E}
)}\nonumber\\ &\le c\Big( \sum_{i=1}^k\left\| e^{w_i } \right\| _{pq} +\la^{\frac{2-pq}{pq}}\Big) e^{ \frac {pr}{16\pi}\|u_1\|^2} \left\|v_1 \right\|  \left\| z_1 \right\|
\nonumber\\ &\le c \la^{1-pq\over pq}  e^{ \frac {pr}{16\pi}\|u_1\|^2} \left\|v_1 \right\|  \left\| z_1 \right\|.
\end{aligned}$$

Moreover
\begin{align*}
g''(W_2+u)[v,z]&={e^{W_2+u}\over\int\limits_{\Omega }e^{W_2+u}}vz-{e^{W_2+u}\over\(\int\limits_{\Omega }e^{W_2+u}\)^2}v\int\limits_{\Omega }e^{W_2+u}z-{e^{W_2+u}\over\(\int\limits_{\Omega }e^{W_2+u}\)^2}z\int\limits_{\Omega }e^{W_2+u}v\\
 &-{e^{W_2+u}\over\(\int\limits_{\Omega }e^{W_2+u}\)^2}\int\limits_{\Omega }e^{W_2+u}vz+2{e^{W_2+u}\over\(\int\limits_{\Omega }e^{W_2+u}\)^3}\int\limits_{\Omega }e^{W_2+u}v
 \int\limits_{\Omega  }e^{W_2+u}z.\end{align*}

We use H\"older's inequalities with $ {1\over \alpha }+{1\over \beta }=1,$ $ {1\over a}+{1\over b }+{1\over c}=1,$ and ${1\over q }+ {1\over r }+{1\over s }+{1\over t }=1$
and we get
$$\begin{aligned}
&\left\|g''(W_2+u_2)[v_2,z_2]\right\|_p \nonumber\\ & \le {\|e^{W_2}\|_{pq}\|e^{u_2}\|_{pr}\|v_2\|_{ps}\|z_2\|_{pt}\over\|e^{W_2+u_2}\|_{1}}\nonumber\\ & + 2 {\|e^{W_2}\|^2_{pa}\|e^{ u_2}\|^2_{pb}\|v_2\|_{pc }\|z_2\|_{pc} \over\|e^{W_2+u_2}\|^2_{1}}\nonumber\\ &+{\|e^{W_2}\|_{p \alpha}\|e^{u_2}\|_{p \beta}\|e^{W_2}\|_{pq}\|e^{u_2}\|_{pr}\|v_2\|_{ps}\|z_2\|_{pt}\over\|e^{W_2+u_2}\|^2_{1}}\nonumber\\ &+2 {\|e^{W_2 }\|_{p \alpha}\|e^{u_2}\|_{p \beta}\|e^{W_2}\|^2_{pa}\|e^{ u_2}\|^2_{pb}\|v_2\|_{pc }\|z_2\|_{pc} \over\|e^{W_2+u_2}\|^3_{1}}\nonumber\\ &\hbox{(we use the continuity of $H^1_0(\Omega)\hookrightarrow L^p(\Omega)$, \eqref{ew2}, \eqref{l1} and Lemma \ref{tmt})} \nonumber \\
 & \le c_1 e^{ c_2\|u_2\|^2} \|v_2\|\|z_2\|  .\end{aligned}$$
It is important to point out that
\begin{equation}\label{l1}
\|e^{W_2+u_2}\|_{1}\ge c \ \hbox{provided $\|u_2\|$ is small enough.}
\end{equation}
Indeed,
we have
$$ \|e^{W_2+u}\|_{1}\ge \|e^{W_2 }\|_{1}- \|e^{W_2+u}-e^{W_2 }\|_{1},$$
$$\|e^{W_2+u}-e^{W_2 }\|_{1}=\int\limits_\Omega e^{W_2}|e^u-1|dx\le \int\limits_\Omega e^{W_2}|u|\le \|e^{W_2}\|_p\|u\|_{p \over p-1}\le \|e^{W_2}\|_p\|u\| $$
and the claim \eqref{l1} follows immediately by \eqref{ew2}.
\\
\end{proof}
  \begin{lemma}\label{B2} Let $\mathcal C \subset \mathcal F_k \Omega $ be a fixed compact set. For any $p,q>1$ there exist $\lambda_0>0,$ $r_0>0$  and $c_1,c_2>0$  such that for any $\la \in(0, \la_0)$, for any $\bxi\in\mathcal C$ and for any $\phi,\psi\in H^1_0(\Omega)\times H^1_0(\Omega) $ with $\|\phi\|,\|\psi\|\le r_0$
 \begin{equation}\label{B21}
\left \| {N} (\phi)\right\| \le c_1e^{c_2\|\phi \|^2}\lambda^{  \frac{1-pq}{pq}}\|\phi \|^2\end{equation}
  and
    \begin{equation}\label{B22}
\left\| {N} (\phi )-N(\psi)\right\| \le
c_1e^{c_2(\|\phi \|^2+\|\psi \|^2)}\lambda^{ \frac{1-pq}{pq}}
   \|\phi -\psi\|(\|\phi \|+\|\psi \|).\end{equation}
  \end{lemma}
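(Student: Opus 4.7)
The plan is to derive both estimates by combining the continuity of $i^*_p$ with second-order Taylor expansions of $F$ around $W$, and then invoking parts c) and d) of Lemma \ref{aux}. For \eqref{B21}, since $\Pi^\perp$ is a contraction on $H_0^1\times H_0^1$ and \eqref{isp} gives $\|N(\phi)\| \leq c_p\|F(W+\phi)-F(W)-F'(W)\phi\|_p$, I would write the Taylor remainder as
\[
F(W+\phi)-F(W)-F'(W)\phi = \int_0^1 (1-t)\,F''(W+t\phi)[\phi,\phi]\,dt.
\]
Each component of $F$ is a fixed linear combination of $\lambda f(u_1)$ and $g(u_2)$, so each component of $F''(W+t\phi)[\phi,\phi]$ is a fixed linear combination of $\lambda f''(W_1+t\phi_1)\phi_1^2$ and $g''(W_2+t\phi_2)[\phi_2,\phi_2]$. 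Putting $u_j:=t\phi_j$, which satisfies $\|u\|\leq \|\phi\|\leq r_0$, and $v=z=\phi_j$, parts c) and d) of Lemma \ref{aux} yield
\[
\|\lambda f''(W_1+t\phi_1)\phi_1^2\|_p \leq C\,\lambda^{(1-pq)/(pq)}e^{c\|\phi\|^2}\|\phi\|^2,\qquad \|g''(W_2+t\phi_2)[\phi_2,\phi_2]\|_p \leq C\,e^{c\|\phi\|^2}\|\phi\|^2.
\]
Since $pq>1$, the exponent $(1-pq)/(pq)$ is negative and $\lambda^{(1-pq)/(pq)}\geq 1$ for $\lambda < \lambda_0$, so the $g''$ contribution is absorbed; integrating in $t\in[0,1]$ and using the continuity $c_p$ produces \eqref{B21}.

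For the Lipschitz estimate \eqref{B22}, I would iterate the same idea. First write
\[
F(W+\phi)-F(W+\psi) = \int_0^1 F'\bigl(W+\psi+s(\phi-\psi)\bigr)(\phi-\psi)\,ds,
\]
then expand $F'(W+\psi+s(\phi-\psi))-F'(W)$ by the mean value theorem applied to $F'$ to obtain
\[
F(W+\phi)-F(W+\psi)-F'(W)(\phi-\psi) = \int_0^1\!\!\int_0^1 F''\bigl(W+t(\psi+s(\phi-\psi))\bigr)\bigl[\psi+s(\phi-\psi),\,\phi-\psi\bigr]\,dt\,ds.
\]
Applying Lemma \ref{aux} c), d) with $u=t(\psi+s(\phi-\psi))$ (whose norm is at most $\|\phi\|+\|\psi\|\leq 2r_0$) and with $v=\psi+s(\phi-\psi)$, $z=\phi-\psi$ bounds the integrand component-wise in $L^p$ by
\[
C\,\lambda^{(1-pq)/(pq)}\,e^{c(\|\phi\|^2+\|\psi\|^2)}\,(\|\phi\|+\|\psi\|)\,\|\phi-\psi\|,
\]
and integration in $s,t$ followed by \eqref{isp} delivers \eqref{B22}.

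The argument is essentially routine bookkeeping, and I do not anticipate any real obstacle. The only points that deserve attention are: (i) choosing $r_0$ small enough that $\|t\phi\|$ and $\|t(\psi+s(\phi-\psi))\|$ remain within the range of applicability of Lemma \ref{aux} for all $t,s\in[0,1]$, which is automatic once $\|\phi\|,\|\psi\|\leq r_0$; (ii) verifying that both $f''$ and $g''$ contributions can be subsumed into the single factor $\lambda^{(1-pq)/(pq)}$ by taking $\lambda<\lambda_0$; and (iii) tracking the constant coefficients of the two building blocks in each component of $F''$, which only inflates the constants $c_1,c_2$.
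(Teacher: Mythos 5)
Your argument is correct and follows essentially the same route as the paper: reduce to an $L^p$ bound on the second-order Taylor remainder of $F$ via the continuity of $i^*_p$, and then invoke parts c) and d) of Lemma \ref{aux}, absorbing the $g''$ contribution into the factor $\lambda^{(1-pq)/(pq)}\ge 1$. The only cosmetic differences are that the paper deduces \eqref{B21} from \eqref{B22} by taking $\psi=0$ and uses two successive applications of the Ambrosetti--Prodi mean value inequality instead of integral-form remainders, which changes nothing of substance.
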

\begin{proof} Let us remark that (\ref{B21}) follows by   choosing
$\psi =0 $ in (\ref{B22}) . Let us prove (\ref{B22}). First of all, we point
out that
$$\left\| {N} (\phi )-N(\psi)\right\| \le c_p\left\| F (W+\phi )-F (W+\psi )-F' (W)(\phi-\psi)\right\| _p.$$

We apply the mean value theorem (\cite{aprodi}, Theorem 1.8) to the map: $\varphi \mapsto f(\varphi + W_i) - f'(W_i)\, \varphi \in L^p(\Omega)$, with $\varphi \in H_0^1(\Omega)$. Then, there exists $\theta \in (0,1)$,

$$ \| f (W_i+ \phi_i )-f (W_i+ \psi_i )-f'(W_i) (\phi_i-\psi_i)\|_{p} \leq \| \[ f'(W_1+\theta \phi_i+(1-\theta)\psi_i)-f'(W_i)\](\phi_i-\psi_i)\|_{p}.$$

We apply again the mean value theorem to the map $\varphi \mapsto f'(\varphi + W_i)(\phi_i - \psi_i)$; there exists $\eta \in (0,1)$,

\begin{align*}& \| \[f'(W_1+\theta \phi_i+(1-\theta)\psi_i)-f'(W_i)\](\phi_i-\psi_i)\|_{p} \\ &\leq  \| f''(W_1+\eta (\theta \phi_i+(1-\theta)\psi_i))( \eta (\theta \phi_i+(1-\theta)\psi_i)) (\phi_1-\psi_1)\|_{p}.\end{align*}

We can argue in the same way to estimate the term:

$$ \| g (W_i+ \phi_i )-g (W_i+ \psi_i )-g'(W_i) (\phi_i-\psi_i)\|_{p}.$$

Lemma \ref{aux} allows us to conclude.

\end{proof}

\begin{lemma} \label{pixi} For any $\bxi$ in compact sets of $\mathcal{F}_k \Omega$ and any $\phi \in H^1_0(\Omega)$, there holds:

\begin{equation} \label{eq:pixi} \| \partial_{\xi_k^l} \Pi \ \phi \| = O(\la^{-\frac12}) \| \phi \|, \ \ \ \  \| \partial_{\xi_k^l} \Pi^{\perp} \ \phi \| = O(\la^{-\frac12}) \| \phi \|\end{equation}
\end{lemma}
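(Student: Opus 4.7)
The plan is to write the projection $\Pi\phi$ explicitly in coordinates with respect to the basis $\{(PZ_i^j,0)\}_{i=1,\dots,k;\,j=1,2}$ of $K$, and then differentiate that explicit formula in $\bxi$. Setting $\phi=(\phi_1,\phi_2)$, we have $\Pi\phi=\left(\sum_{i,j}a_{ij}PZ_i^j,\,0\right)$, where the coefficients $\mathbf{a}=(a_{ij})$ are the unique solution of the linear system $G(\bxi)\,\mathbf{a}=\mathbf{b}(\bxi,\phi_1)$, with Gram matrix $G_{(ij)(mn)}=\langle PZ_i^j,PZ_m^n\rangle$ and right-hand side $b_{mn}=\langle\phi_1,PZ_m^n\rangle$. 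Lemma \ref{stime-utili} supplies all the asymptotics I need: $G=\frac{a^2}{\lambda}(I+o(1))$, so $G^{-1}=O(\lambda)$; $|b_{mn}|\le\|\phi_1\|\|PZ_m^n\|=O(\lambda^{-1/2})\|\phi\|$, so $|a_{ij}|=O(\lambda^{1/2})\|\phi\|$.

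The second step is to differentiate the identity $G\mathbf{a}=\mathbf{b}$ with respect to $\xi_k^l$, obtaining
\[ \partial_{\xi_k^l}\mathbf{a}=G^{-1}\bigl[\partial_{\xi_k^l}\mathbf{b}-\partial_{\xi_k^l}G\cdot\mathbf{a}\bigr]. \]
Again by Cauchy--Schwarz and Lemma \ref{stime-utili}, $|\partial_{\xi_k^l}b_{mn}|=|\langle\phi_1,\partial_{\xi_k^l}PZ_m^n\rangle|=O(\lambda^{-1})\|\phi\|$, and
\[ |\partial_{\xi_k^l}G_{(ij)(mn)}|\le\|\partial_{\xi_k^l}PZ_i^j\|\|PZ_m^n\|+\|PZ_i^j\|\|\partial_{\xi_k^l}PZ_m^n\|=O(\lambda^{-3/2}). \]
Combining with $G^{-1}=O(\lambda)$ and $|a_{ij}|=O(\lambda^{1/2})\|\phi\|$ gives $|\partial_{\xi_k^l}a_{ij}|=O(1)\|\phi\|$.

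The final step is a triangle inequality: since $\phi$ itself does not depend on $\bxi$,
\[ \partial_{\xi_k^l}\Pi\phi=\Bigl(\sum_{i,j}\partial_{\xi_k^l}a_{ij}\,PZ_i^j+\sum_{i,j}a_{ij}\,\partial_{\xi_k^l}PZ_i^j,\;0\Bigr), \]
and using $\|PZ_i^j\|=O(\lambda^{-1/2})$, $\|\partial_{\xi_k^l}PZ_i^j\|=O(\lambda^{-1})$ from Lemma \ref{stime-utili}, the two sums contribute $O(1)\|\phi\|\cdot O(\lambda^{-1/2})$ and $O(\lambda^{1/2})\|\phi\|\cdot O(\lambda^{-1})$ respectively, both $O(\lambda^{-1/2})\|\phi\|$. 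For the second bound, observe $\Pi^\perp\phi=\phi-\Pi\phi$ with $\phi$ independent of $\bxi$, hence $\partial_{\xi_k^l}\Pi^\perp\phi=-\partial_{\xi_k^l}\Pi\phi$ and the same estimate follows.

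The only mildly delicate point, and the one I would double-check most carefully, is the bookkeeping of powers of $\lambda$: the Gram matrix is nearly singular on the order $\lambda^{-1}$, yet its inverse is only $O(\lambda)$ in norm because of the $o(1/\lambda)$ off-diagonal decay in \eqref{der3}. It is precisely this cancellation that prevents the derivative estimate from picking up an extra factor of $\lambda^{-1/2}$, and I would make sure that the asymptotic diagonal form of $G$ is quantitative enough (to the level needed, it is, since the off-diagonal terms are $o(\lambda^{-1})$ rather than merely $O(\lambda^{-1})$) before inverting.
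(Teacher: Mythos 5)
Your proposal is correct and follows essentially the same route as the paper: both write $\Pi\phi$ in the basis $\{PZ_i^j\}$, invert the Gram system $A\mathbf{a}=\mathbf{b}$ using the diagonal-dominance from \eqref{der3} to get $\|A^{-1}\|=O(\lambda)$, differentiate the system to obtain $\|\partial_{\xi_k^l}\mathbf{a}\|=O(1)\|\phi\|$, and conclude by the product rule together with \eqref{der4}. The cancellation you flag as delicate is exactly the point the paper also relies on, and your quantitative justification via the $o(1/\lambda)$ off-diagonal bound is adequate.
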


\begin{proof}  Given any $\phi\in H^1_0(\Omega)$, we write $\Pi \, \phi$ in coordinates
$$\Pi \, \phi= \sum_{i, \ j}a_i^j PZ_i^j, $$ with $j=1,2$ and $i=1, \dots k$. Here the coefficients $a_i^j$ solve

  $$\langle\phi,\ PZ_{k}^l \rangle= \sum_{i, \ j} a_i^j \langle PZ_i^j,\ PZ_{k}^l \rangle. $$

In other words, the vector $a=(a_i^j)_{2k}$ solves the linear system:

  $$ A \cdot a = b,$$

with $b=(\langle\phi,\ PZ_i^j \rangle )_{2k}$ and $$A=(\langle PZ_i^j,\ PZ_{k}^l \rangle)_{2k \times 2k}. $$

Lemma \ref{stime-utili} implies that the elements in the diagonal of $A$ are of higher order than the others and $\|A \| =O(\la^{-1})$,
$\| A^{-1}\| = O(\la)$. Again by taking into account Lemma \ref{stime-utili}, we conclude:

\begin{equation} \label{a} a= A^{-1} \cdot b \Rightarrow \| a \| \leq O(\la^{\frac12}) \| \phi \|.\end{equation}

Computing now the derivative with respect to $\xi_k^l$, we obtain:

  $$ \partial_{\xi_k^l} a= A^{-1} \cdot \partial_{\xi_k^l} b  - \ A^{-1} \cdot \partial_{\xi_k^l} A \cdot A^{-1} \cdot b.$$

By Lemma \ref{stime-utili}, $\| \partial_{\xi_k^l} A \| \leq O(\delta^{-3})$. Therefore,

\begin{equation} \label{axi} \| \partial_{\xi_k^l} a \| \leq O(1) \| \phi \|. \end{equation}

Finally, observe that
$$ \partial_{\xi_k^l} \Pi \ \phi = \sum_{i, \ j}a_i^j \partial_{\xi_k^l} PZ_i^j + \partial_{\xi_k^l} a_i^j PZ_i^j.$$

Taking into account \eqref{a}, \eqref{axi} and Lemma \ref{stime-utili}, we conclude the first estimate of \eqref{eq:pixi}. The second follows from $\Pi^{\perp} \, \phi = \phi - \Pi \, \phi$.

\end{proof}

Now we are able to solve problem \eqref{goal}.
\begin{prop}\label{phi} Let $\mathcal C \subset \mathcal F_k \Omega $ be a fixed compact set. For any $\e>0$ there exists $\lambda_0>0,$    and $C>0$  such that for any $\la \in(0, \la_0)$ and for any $\bxi\in\mathcal C$ there exists a unique $\phi=\phi_{\bxi}\in{K^\perp}$ satisfying  \eqref{equ1} and
$$
\|     \phi\|\leq C\la^{\frac12-\e}
.$$ Moreover the map $\bxi\mapsto \phi_{\bxi}\in H^1_0(\Omega)\times H^1_0(\Omega)$ is $C^1$ and:

$$ \| \partial_{\xi_k^l} \phi \| \leq C \lambda^{-\e}.$$

 \end{prop}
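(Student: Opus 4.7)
Plan. I would reformulate equation \eqref{equ1}, equivalently \eqref{goal}, as the fixed point problem
\begin{equation*}
\phi=T(\phi):=L^{-1}\bigl(N(\phi)+S(\phi)+R\bigr)
\end{equation*}
on the Banach space $K^\perp$, and apply the Banach contraction principle on the ball $\{\phi\in K^\perp:\|\phi\|\le r\}$ with $r:=\la^{1/2-\e}$.

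The first step is the size and Lipschitz estimates for $T$. I would fix $p>1$ and $q>1$ sufficiently close to $1$ so that the exponents $(2-p)/(2p)$ in Lemma \ref{aux0} and $(1-pq)/(pq)$ in Lemma \ref{B2} lie arbitrarily close to $1/2$ and to $0$, respectively. Combining Proposition \ref{ex} ($\|L^{-1}\|\le C|\log\la|$) with Lemma \ref{error} ($\|R\|\le C\la^{1/2-\e/2}$), Lemma \ref{aux0} and Lemma \ref{B2}, for any $\phi,\psi\in K^\perp$ with $\|\phi\|,\|\psi\|\le r$ one obtains
\begin{align*}
\|T(\phi)\|&\le C|\log\la|\bigl(\la^{1/2-\e/2}+\la^{1/2-\e/2}\|\phi\|+\la^{-\e/2}\|\phi\|^2\bigr),\\
\|T(\phi)-T(\psi)\|&\le C|\log\la|\bigl(\la^{1/2-\e/2}+\la^{-\e/2}r\bigr)\|\phi-\psi\|.
\end{align*}
For $\la$ small the first right-hand side is bounded by $r$ and the second by $\tfrac12\|\phi-\psi\|$, so $T$ is a strict contraction of the ball into itself. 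The Banach fixed point theorem yields a unique $\phi_{\bxi}\in K^\perp$ satisfying \eqref{equ1} together with the bound $\|\phi_{\bxi}\|\le C\la^{1/2-\e}$.

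For the $C^1$ dependence on $\bxi$ I would apply the Implicit Function Theorem to the map
\begin{equation*}
\mathcal G(\phi,\bxi):=L(\phi)-N(\phi)-S(\phi)-R,
\end{equation*}
where the $\bxi$-dependence enters through $W=W(\bxi)$ and $\Pi^\perp=\Pi^\perp_{\bxi}$. This map is $C^1$ jointly in $(\phi,\bxi)$ thanks to Lemma \ref{zeta} and the smoothness of the bubbles, and the partial derivative $D_\phi\mathcal G(\phi_{\bxi},\bxi)=L+o(1)$ in operator norm because $DN(\phi_{\bxi})$ and $DS(\phi_{\bxi})$ are $o(1/|\log\la|)$ by Lemmas \ref{aux0} and \ref{B2}. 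To extract the quantitative bound on $\partial_{\xi_k^l}\phi_{\bxi}$, I would differentiate the identity $L(\phi_{\bxi})=N(\phi_{\bxi})+S(\phi_{\bxi})+R$ with respect to $\xi_k^l$ and invert $D_\phi\mathcal G$ on the resulting source, whose pieces are $\partial_{\xi_k^l}R=O(\la^{-\e/2})$ (from Lemma \ref{error} combined with the $\la^{-1/2}$ loss of Lemma \ref{pixi}), $(\partial_{\xi_k^l}L)\phi_{\bxi}=O(\la^{-1/2})\cdot O(\la^{1/2-\e})=O(\la^{-\e})$ (via \eqref{der4} and Lemma \ref{pixi}), and analogous contributions from $N$ and $S$. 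Multiplying by $\|L^{-1}\|=O(|\log\la|)$ and absorbing the logarithm into a smaller $\e$ yields $\|\partial_{\xi_k^l}\phi_{\bxi}\|\le C\la^{-\e}$.

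The principal obstacle is this final bookkeeping. Every object involved, namely $W$, $L$ and $\Pi^\perp$, depends on $\bxi$ only through the highly concentrated objects $Pw_i$, $PZ_i^j$, whose $\bxi$-derivatives blow up like $\la^{-1/2}$ by Lemma \ref{stime-utili} and Lemma \ref{pixi}. The argument closes only because the smallness $\|\phi_{\bxi}\|=O(\la^{1/2-\e})$ already obtained in the first step precisely compensates this singular factor, leaving a net $O(\la^{-\e})$ control as required.
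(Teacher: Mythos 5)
Your proposal is correct and follows essentially the same route as the paper: a contraction mapping argument for $T=L^{-1}(N+S+R)$ on a ball of radius $\sim\la^{1/2-\e}$ using Proposition \ref{ex} together with Lemmas \ref{error}, \ref{aux0} and \ref{B2}, and then the Implicit Function Theorem (with the inverse of the linearization costing a factor $|\log\la|$, which you correctly note forces the perturbations of $L$ to be $o(1/|\log\la|)$) plus the bookkeeping $\la^{-1/2}\cdot\la^{1/2-\e}=\la^{-\e}$ for the derivative bound. This matches the paper's argument in both structure and the key quantitative cancellations.
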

\begin{proof} Equation \eqref{goal} can be solved via a contraction mapping argument. Indeed, in virtue of Proposition \ref{ex}, we can introduce the map
$$T(\phi):=L^{-1}\( N(     \phi)+  S(\phi)+    R\),\ \phi\in K^\perp.$$ By Lemma \ref{error}, Lemma \ref{aux0} and Lemma \ref{B2},
 it turns out to be a contraction map over the ball
\beq\label{ball}\left\{\phi\in K^\perp\ :\ \|\phi\|\le \Lambda|\log\la|\la^{2-p\over2p}\right\}\eeq provided $\Lambda$ is large enough and $\la$ is small enough.
Indeed, for any $\phi,\psi$ in the ball \eqref{ball}
$$\|T(\phi)\|\leq C|\log\la |\la^{\frac{1-pq}{pq}}\la^{\frac{2-p}{p}}+C|\log \la| \la^{\frac{2-p}{p}}<\Lambda|\log\la|\la^{2-p\over2p}$$ and
$$\|T(\phi)-T(\psi)\|\leq C|\log\la| \la^{\frac{1-pq}{pq}}\la^{\frac{2-p}{2p}}\|\phi-\psi\|<\|\phi-\psi\|$$provided that $p$ and $q$ are sufficiently close to 1.

We now consider the dependence of $\phi$ on $\bxi$.
We apply the Implicit Function Theorem to the function $\Phi:\mathcal{F}_k \Omega\times H^1_0(\Omega)\times H^1_0(\Omega)\to H^1_0(\Omega)\times H^1_0(\Omega)$ defined by
$$\Phi(\bxi, \phi)=\phi+\Pi^{\perp}\[W-i^*_p\Big(F(W+\Pi^\perp\phi\Big)\].$$
Indeed $\Phi(\bxi,\phi_{\bxi})=0$ and the linear operator: $\frac{\partial \Phi}{\partial \phi}(\bxi,\phi_{\bxi}):H^1_0(\Omega)\times H^1_0(\Omega)\to H^1_0(\Omega)\times H^1_0(\Omega)$ is given by

$$\begin{aligned}\frac{\partial \Phi}{\partial \phi}(\bxi,\phi_{\bxi})(\psi)&
=\psi-\Pi^\perp\[i^*_p\Big(F'(W+\phi_{\bxi})\Pi^\perp\psi \Big)\].\end{aligned}$$
We observe that $\frac{\partial \Phi}{\partial \phi}(\bxi,\phi_{\bxi})$ is a Fredholm operator.

By comparing $\frac{\partial \Phi}{\partial \phi}(\bxi,\phi_{\bxi})$ with the definition of $L$ in \eqref{elle}, we have then
$$\frac{\partial \Phi}{\partial \phi}(\bxi,\phi_{\bxi})(\psi)=\Pi(\psi)+L(\Pi^\perp\psi)-S(\Pi^\perp \psi)-\Pi^{\perp}\[i^*_p\Big(\big(F'(W+\phi_{\bxi})-F'(W)\big)\Pi^\perp \psi\Big)\]$$
by which, using Lemma \ref{ex} and Lemma \ref{aux0},
\beq\label{inj}\begin{aligned}&\left\|\frac{\partial \Phi}{\partial \phi}(\bxi,\phi_{\bxi})(\psi)\right\|\\ &\geq c\| \Pi(\psi)\|+c\|L(\phi_{\bxi})(\Pi^\perp\psi)\|-
\Big\|\Pi^\perp\[i^*_p\Big(\big(F'(W+\phi_\e)-F'(W)\big)\Pi^\perp \psi\Big)\]\Big\|-\|S(\Pi^\perp\psi)\|
\\ &\geq c\|\Pi(\psi)\|+ \frac{c}{|\log \lambda|} \|\Pi^\perp(\psi)\|-\|\big(F'(W+\phi_{\bxi})-F'(W)\big)\Pi^\perp \psi\|_{p}-\la^{\frac12-\e}\|\psi\|\\ &\geq \frac{c}{|\log \lambda|} \|\psi\|-\|\big(F'(W+\phi_{\bxi})-F'(W)\big)\Pi^\perp \psi\|_{p}.
\end{aligned}\eeq
Now, setting $\phi_{\bxi}:=(\phi_1,\phi_2)$, $\psi:=(\psi_1,\psi_2)$ and $\Pi^\perp:=(\Pi_1^\perp,\Pi_2^\perp)=(\Pi_1^\perp, id)$, we use Lemma \ref{aux} and  we compute for some $\theta_1\in (0,1)$
$$\begin{aligned}\|\big(f'(W_1+\phi_1)-f'(W_1)\big)\Pi_1^\perp \psi\|_p&=\|f''(W_1+\theta_1\phi_1)[\phi_1,\Pi_1^\perp \psi_1]\|_p\leq C\la^{\frac{1-pq}{pq}}\|\phi_{\bxi}\|\|\psi\|\\ &\leq C\la^{\frac{1-pq}{pq}+\frac{2-p}{2p}}\|\psi\|.
\end{aligned}$$
Similarly$$\begin{aligned}&\|\la\big(g'(W_2+\phi_2)-g'(W_2)\big) \psi_2\|_p=\|g''(W_2+\theta_2\phi_2)[\phi_2, \psi_2]\|_p\leq C\|\phi_{\bxi}\|\|\psi\|\leq C
\la^{\frac{2-p}{2p}}\|\psi\|.
\end{aligned}$$
We take $p$, $q$ sufficiently close to 1 and combine the above two estimates with \eqref{inj}, to conclude that $$\left\|\frac{\partial \Phi}{\partial \phi}(\bxi,\phi_{\bxi})(\psi)\right\|\geq \frac{C}{|\log \la |} \|\psi\|.$$ This implies the invertibility of the operator $\frac{\partial \Phi}{\partial \phi}(\bxi,\phi_{\bxi})$, and moreover
$$\left ( \frac{\partial \Phi}{\partial \phi}(\bxi,\phi_{\bxi}) \right )^{-1} \leq C |\log \la |.$$

By the implicit function theorem, the map $\bxi \mapsto \phi_{\bxi}$ is $C^1$ and

$$ \frac{\partial \phi}{\partial \bxi} = - \left ( \frac{\partial \Phi}{\partial \phi}(\bxi,\phi_{\bxi}) \right )^{-1} \left ( \frac{\partial \Phi}{\partial \bxi}  \right ).$$

So we need to estimate $\frac{\partial \Phi}{\partial \bxi}$.

\begin{align*}\Big\|\frac{\partial \Phi}{\partial \bxi} \Big\|&\leq
\|\partial_{\bxi_i} \Pi^{\perp} \phi \| \|W-i^*_p\big(F(W+\phi_{\bxi})\big)\|+\|\partial_{\bxi_i}W-i^*_p\big(F'(W+\phi_{\bxi})(\partial_{\bxi_i} W+\partial_{\bxi_i}(\Pi^\perp)\phi_{\bxi})\big)\|
\\ &\leq C \la^{-1/2}\|-\Delta W-F(W+\phi_{\bxi})\|_p +C \|-\partial_{\bxi_i}\Delta W-F'(W+\phi_{\bxi})[\partial_{\bxi_i} W+\partial_{\bxi_i}(\Pi^\perp)\phi_{\bxi}]\|_{p}
\\ &\leq C\la^{-1/2}\Big( \|-\Delta W-F(W)\|_p + \| F(W)-F(W+\phi)\|_p \Big ) \\ & + C  \|-\partial_{\bxi_i}\Delta W-F'(W) \partial_{\bxi_i} W \|_{p}
+ C \| [F'(W+\phi)- F'(W)](\partial_{\bxi_i} W)\|_{p} \\ & + C \| F'(W+\phi) \partial_{\bxi_i} \Pi^{\perp} \phi\|_{p}.
\end{align*}

By using the mean value Theorem (Theorem 1.8 of \cite{aprodi}) as in the proof of Lemma \ref{B2}, and taking into account Lemma \ref{aux}, we conclude:

$$ \| F(W)-F(W+\phi)\|_{p} \leq C \la^{\frac{1-pq}{pq}}\| \phi \| \leq C \la^{\frac{1-pq}{pq} + \frac{2-p}{2p}}.$$

Similarly,

$$\| [F'(W+\phi)- F'(W)](\partial_{\bxi_i} W)\|_{p} \leq  C \la^{\frac{1-pq}{pq}} \| \phi \| \| \partial_{\bxi_i} W\| \leq C \la^{\frac{1-pq}{pq}+\frac{2-p}{2p}-\frac12},$$
where we have used estimate \eqref{der2} in Lemma \ref{stime-utili}.

Moreover,

$$  \| F'(W+\phi) \partial_{\bxi_i} \Pi^{\perp} \phi\|_{p} \leq  C \la^{\frac{1-pq}{pq}+\frac{2-p}{2p}-\frac12}   .$$

Therefore, we just need to estimate the $L^p$ norms of the terms:

$$ -\Delta W-F(W) , \ \ -\partial_{\bxi_i} \Delta W-F'(W) \partial_{\bxi_i} W . $$

But these are, respectively, $\tilde{R}$ and $\nabla_{\bxi} \tilde{R}$ as defined in \eqref{Rtilde}. And their $L^p$ norms have been estimated in Lemma \ref{error}, so we finish the proof.
\end{proof}

Setting
$$u_1=W_1+\phi_1,\qquad u_2=W_2+\phi_2,$$ where  $\phi_{\bxi}=(\phi_1,\phi_2)$ is provided by Proposition \ref{phi}, then $u=(u_1,u_2)$ satisfies

\beq\label{eqq}\left\{\begin{aligned}&\Delta u_1+2\la f(u_1)-\rho_2 g(u_2)=\sum\limits_{j=1,2\atop i=1,\dots,k}c_{ij}Z^j_ie^{w_i}\\ &\Delta u_2+2\rho_2 g(u_2)-\la f(u_1)=0
\\ &\phi_1,\phi_2\in H^1_0(\Omega),\;\; \into \nabla\phi_1\nabla PZ^j_idx=0 \ j=1,2,\ i=1,\dots,k.
\end{aligned}\right.\eeq
where the constants  $c_{ij}$ verify \beq\label{ciij}|c_{ij}|\leq C\la \eeq according to  \eqref{combi2}.
Therefore the following identities hold:
\beq\label{hem1} \frac23\Delta u_1+\frac13\Delta u_2+\la f(u_1)=\frac23\sum\limits_{j=1,2\atop i=1,\dots,k}c_{ij}Z^j_ie^{w_i} \eeq \beq\label{hem2}\frac23\Delta u_2+\frac13\Delta u_1+\rho_2g(u_2)=\frac13\sum\limits_{j=1,2\atop i=1,\dots,k}c_{ij}Z^j_ie^{w_i} \eeq

Let us consider the energy functional associated to the system \eqref{s}:
\begin{equation}\label{energy}
J(u_1,u_2):={1\over3}Q(u_1,u_2)-\la\int\limits_\Omega e^{u_1(x)}dx-\rho_2\log \int\limits_\Omega e^{u_2(x)}dx
\end{equation}
where $$Q(u_1,u_2)=\int\limits_\Omega\(|\nabla u_1|^2+|\nabla u_2|^2+\nabla u_1\nabla u_2\)dx.$$

Next lemma  concerns the relation between the critical
points of $M$ and  those of the energy functional $J $.

\begin{lemma}\label{relation}
Let $\bxi  \in \mathcal F_k \Omega $ be a critical point of $\bxi\mapsto J(W+\phi_{\bxi}) $.
Then, provided that $\e>0$ is sufficiently small, the
corresponding function $ u= W + \phi_{\bxi}$ is a solution of \eqref{s}.
\end{lemma}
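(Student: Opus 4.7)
The plan is to show that if $\bxi$ is a critical point of the reduced functional $\bxi \mapsto J(W+\phi_\bxi)$, then the Lagrange multipliers $c_{ij}$ in \eqref{eqq} all vanish; by \eqref{eqq} this immediately gives that $u = W+\phi_\bxi$ solves system \eqref{s}. Following the standard finite-dimensional reduction philosophy, I would translate the criticality of the reduced functional into a homogeneous linear system on the $c_{ij}$'s and show that its coefficient matrix is invertible for $\la$ small enough.

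As a first step I would use \eqref{hem1}--\eqref{hem2} to compute the $H^{-1}$-gradient of $J$ at $u = W+\phi_\bxi$: integration by parts gives, for all $\varphi_1,\varphi_2 \in H^1_0(\Omega)$,
\begin{align*}
\langle\partial_{u_1}J(u),\varphi_1\rangle &= -\tfrac{2}{3}\sum_{i,j} c_{ij}\int_\Omega Z_i^j e^{w_i}\,\varphi_1\,dx,\\
\langle\partial_{u_2}J(u),\varphi_2\rangle &= -\tfrac{1}{3}\sum_{i,j} c_{ij}\int_\Omega Z_i^j e^{w_i}\,\varphi_2\,dx.
\end{align*}
Applying the chain rule to $\bxi \mapsto J(W+\phi_\bxi)$ and exploiting the algebraic identity $\tfrac{2}{3}W_1+\tfrac{1}{3}W_2 = \tfrac{1}{2}\sum_i Pw_i$, which is immediate from \eqref{ans}, the criticality conditions become
\begin{equation*}
\sum_{i,j} c_{ij}\int_\Omega Z_i^j e^{w_i}\bigl(\tfrac{1}{2}\partial_{\xi_m^l}Pw_m + r_{m,l}\bigr)\,dx = 0, \qquad m=1,\dots,k,\ l=1,2,
\end{equation*}
where $r_{m,l}$ collects the contributions of $\partial_{\xi_m^l}z(\cdot,\bxi)$ and of $\partial_{\xi_m^l}\phi_\bxi$. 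This is a homogeneous linear system $A c = 0$ in the $2k$ unknowns $c_{ij}$.

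The core of the argument is to show that $A$ is invertible for $\la$ small. Differentiating \eqref{exp} gives $\partial_{\xi_m^l}Pw_m = 4Z_m^l + O(1)$ uniformly, and a scaling $y = (x-\xi_m)/\delta_m$ yields the diagonal entries
\begin{equation*}
A_{(m,l),(m,l)} = 2\int_\Omega (Z_m^l)^2 e^{w_m}\,dx + \mathrm{l.o.t.} = \frac{c_0}{\delta_m^2}\bigl(1+o(1)\bigr) \sim \la^{-1},
\end{equation*}
with $c_0 > 0$ a universal constant. For the off-diagonal entries: when $i=m$ and $j\neq l$ the leading integrand $Z_m^lZ_m^j e^{w_m}$ has odd symmetry about $\xi_m$ and integrates to zero; when $i\neq m$, the factor $Z_i^j e^{w_i}$ concentrates at $\xi_i$ where $\partial_{\xi_m^l}Pw_m$ is smooth and bounded, and a Taylor expansion combined with the vanishing-average identity $\int_\Omega Z_i^j e^{w_i}\,dx = 0$ yields at most an $O(1)$ contribution. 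Hence $A$ is diagonally dominant of order $\la^{-1}$ and therefore invertible, forcing $c_{ij}=0$ for all $i,j$.

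The main technical obstacle is controlling the remainder $r_{m,l}$ so that the diagonal dominance is preserved. The $\partial_{\xi_m^l}z(\cdot,\bxi)$-piece is $O(1)$ in $C^1$ by Lemma \ref{zeta}, producing a correction of order $\|Z_i^j e^{w_i}\|_1 = O(\la^{-1/2}) = o(\la^{-1})$. The $\phi$-piece is more delicate, since Proposition \ref{phi} only gives $\|\partial_{\xi_m^l}\phi_\bxi\| = O(\la^{-\varepsilon})$. For the $\phi_1$-contribution I would exploit the orthogonality constraint $\int_\Omega e^{w_i}Z_i^j\phi_1\,dx = 0$: differentiating it in $\xi_m^l$ rewrites $\int_\Omega Z_i^j e^{w_i}\,\partial_{\xi_m^l}\phi_1\,dx$ as $-\int_\Omega \phi_1\,\partial_{\xi_m^l}(e^{w_i}Z_i^j)\,dx$, which by a H\"older--Sobolev estimate with exponents close to $1$ and $\infty$, together with $\|\phi_\bxi\| = O(\la^{1/2-\varepsilon})$, is $o(\la^{-1})$ for $\varepsilon$ small. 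The $\phi_2$-correction is handled by a direct H\"older bound using $\|Z_i^j e^{w_i}\|_p = O(\delta_i^{-3+2/p})$ with $p$ close to $1$. With these estimates all corrections stay strictly below the diagonal scale, and the argument closes.
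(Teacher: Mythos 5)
Your proposal is correct and follows essentially the same route as the paper: criticality of the reduced functional is converted, via \eqref{hem1}--\eqref{hem2} and $\partial_{\xi_m^l}W_1=4PZ_m^l+O(1)$, $\partial_{\xi_m^l}W_2=-2PZ_m^l+O(1)$, into a homogeneous linear system in the $c_{ij}$ whose matrix is diagonally dominant at scale $\la^{-1}$ (the diagonal being $\|PZ_m^l\|^2\sim a^2/\la$ from Lemma \ref{stime-utili}), while the $\partial_{\xi}\phi$ contributions are shown to be $o(\la^{-1})$ by H\"older with exponent close to $1$, exactly as in \eqref{remhem}--\eqref{remhem1}. Your alternative of differentiating the orthogonality constraint to handle $\int Z_i^je^{w_i}\partial_{\xi_m^l}\phi_1$ is a valid (and slightly more self-contained) variant of the paper's direct estimate using $\|\partial_{\xi_m^l}\phi\|=O(\la^{-\e})$, and yields the same order.
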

\begin{proof}
According to \eqref{eqq}, we  find a solution to the original problem \eqref{s} if $\bxi$ is such that $c_{ij}=c_{ij}(\bxi)=0$ for $j=1,2$ and $i=1,\dots,k$.

Let us we fix $q>1$.  Using that $$\|Z_i^je^{w_i}\|_{q}=
O\(\la^{\frac{2-3q}{2q}}\),$$ by Proposition \ref{phi}
for any $i=1,\ldots, k$ and $j=1,2$ we have
\beq\label{remhem}\into \Big|\partial_{\xi^\ell_h} \phi_1 Z^j_{i}\Big|e^{w_i}dx=O(\la^{-\e})\|Z^j_{i}e^{w_i}\|_q=O(\la^{-\e+\frac{2-3q}{2q}})=o\Big(\frac1\la\Big).\eeq
provided that $q$ is sufficiently close to $1$. Similarly \beq\label{remhem1}\into \Big|\partial_{\xi^\ell_h} \phi_2 Z^j_{i}\Big|e^{w_i}dx=o\Big(\frac1\la\Big).\eeq
Let $\bxi\in\mathcal F_k \Omega $  be a critical point of $\bxi\mapsto J(W+\phi_{\bxi}) $.
By differentiating the function $J(W+\phi_{\bxi}) $ with respect to $\bxi$ and using the ${\cal C}^1$ regularity of the map $\xi\mapsto\phi_{\bxi}\in (H^1_0(\Omega))^2$, we get for $\ell=1,2$ and $h=1,\dots,k$
$$\into \Big(\frac23\Delta u_1+\frac13\Delta u_2+\la f(u_1)\Big)\partial_{ \xi^\ell_h} (W_1+\phi_1)dx+\into \Big(\frac23\Delta u_2+\frac13\Delta u_1+\rho_2g(u_2)\Big)\partial_{ \xi^\ell_h} (W_2+\phi_2)dx=0.$$
which is equivalent, by \eqref{hem1}-\eqref{hem2}, to
\beq\label{diag}\sum\limits_{j=1,2\atop i=1,\dots,k}c_{ij}Z^j_ie^{w_i}\Big(2\partial_{\xi^\ell_h} (W_1+\phi_1) +\partial_{\xi^\ell_h} (W_2+\phi_2) \Big)dx=0
\quad \ell=1,2,\ h=1,\dots,k.\eeq We observe that by \eqref{pzi}
\beq\label{vado}\partial_{ \xi^\ell_h} W_1=4PZ_i^j+O(1), \quad \partial_{ \xi^\ell_h} W_2=-2PZ_i^j+O(1).\eeq
Therefore, combining \eqref{diag} with \eqref{der3} and \eqref{remhem}-\eqref{remhem1},  we get
$$c_{\ell h}+o\(1\)\sum\limits_{j\not=\ell\atop i\not=h}c_{ij}=0\ \hbox{for any $\ell=1,2$ and $h=1,\dots,k,$}$$
so   the system \eqref{diag} is diagonal dominant and then we achieve that all the $c_{ij}$'s are zero. That concludes the proof.

\end{proof}

\section{The reduced energy and proof of Theorem \ref{teo}}
 The first  purpose of this section is to give an asymptotic estimate of $J(W_1, W_2)$, where  $(W_1,W_2)$ is the approximate solution  defined in \eqref{ans} and $J$ is given in \eqref{energy}.

 \begin{prop}\label{asymp} The following holds:
 $$J(W_1,W_2)=-4 \pi k\log \lambda +\Lambda(\bxi)
-8\pi (1-\log 2) + o(1),$$
 ${\cal C}^1$ uniformly with respect to $\xi$ in compact sets of $\Omega$, where $\Lambda$ is defined in \eqref{Lambda}.
 \end{prop}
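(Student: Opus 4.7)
The approach is to expand $J(W_1,W_2)$ term by term using the natural splitting $W_1 = V - \tfrac{1}{2}Z$, $W_2 = -\tfrac{1}{2}V + Z$, where $V := \sum_{i=1}^{k} Pw_i$ and $Z := z(\cdot,\bxi)$. A direct algebraic expansion yields
\begin{equation*}
\tfrac{1}{3}Q(W_1,W_2) \;=\; \tfrac{1}{4}\bigl(\|V\|^2 - \langle V,Z\rangle + \|Z\|^2\bigr),
\end{equation*}
which reduces the quadratic part of $J$ to three manageable inner products.

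For $\|V\|^2 = \sum_{i,j}\int Pw_i\,e^{w_j}\,dx$ (using $-\Delta Pw_i = e^{w_i}$), the diagonal terms are classical Liouville integrals evaluated by means of \eqref{exp} together with the well-known identities $\int_{\R^2} e^{w} = 8\pi$ and $\int_{\R^2} w\,e^{w} = 8\pi\log 8 - 16\pi$, yielding $-32\pi\log\delta_i - 16\pi + 64\pi^2 H(\xi_i,\xi_i) + o(1)$. The off-diagonal terms localize via $Pw_j(\xi_i) = 8\pi G(\xi_i,\xi_j)+o(1)$ to $64\pi^2 G(\xi_i,\xi_j)+o(1)$. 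Substituting the choice $4\delta_i^2 = \lambda\,d_i(\bxi)$ from \eqref{de} turns each $-32\pi\log\delta_i$ into a $-16\pi\log\lambda$ contribution together with terms in $H(\xi_i,\xi_i)$, $G(\xi_j,\xi_i)$ and $z(\xi_i,\bxi)$. Next, $\langle V,Z\rangle = \sum_i \int e^{w_i} Z\,dx = 8\pi\sum_i z(\xi_i,\bxi) + o(1)$, which exactly cancels the $z(\xi_i,\bxi)$ contributions produced above.

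For the nonlinear part, the very choice of $\delta_i$ was designed so that $2\lambda e^{W_1} \approx e^{w_i}$ near each $\xi_i$ (see Lemma \ref{stima-E}); rescaling $y=(x-\xi_i)/\delta_i$ and using $\int_{\R^2}\frac{dy}{(1+|y|^2)^2}=\pi$ gives $\lambda\int_{\Omega} e^{W_1}\,dx = 4\pi k + o(1)$. Concerning the second component, from \eqref{w2} and \eqref{ew2} the function $e^{W_2}$ converges uniformly outside small neighborhoods of the $\xi_i$ to $h(\cdot,\bxi)e^{z(\cdot,\bxi)}$; inside these neighborhoods $e^{W_2}$ is $O(\delta_i^2)$ and therefore negligible. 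Hence $\int_{\Omega} e^{W_2}\,dx = \int_{\Omega} h(x,\bxi)e^{z(x,\bxi)}\,dx + o(1)$, and the definition of $I_{\bxi}$ in \eqref{def I} rewrites this as
\begin{equation*}
-\rho_2 \log\int_{\Omega} e^{W_2}\,dx \;=\; \tfrac{1}{2}\,I_{\bxi}\!\bigl(z(\cdot,\bxi)\bigr) \;-\; \tfrac{1}{4}\|Z\|^2 + o(1),
\end{equation*}
which precisely cancels the $\tfrac{1}{4}\|Z\|^2$ contribution coming from $Q$.

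Assembling the pieces, the residual $-16\pi^2\bigl(\sum_i H(\xi_i,\xi_i) + \sum_{i\neq j} G(\xi_i,\xi_j)\bigr)$ produced by $\|V\|^2$ combines with $\tfrac{1}{2}I_{\bxi}(z(\cdot,\bxi))$ to form $\Lambda(\bxi)$ by definition \eqref{Lambda}; the $\log\lambda$ contributions sum to $-4\pi k\log\lambda$; and the remaining numerical constants give the additive term $-8\pi(1-\log 2)$ claimed in the statement. The main obstacle is the careful bookkeeping of numerical constants through the Liouville integrals (the $\log 8$, $\log 2$ and the $-16\pi$ coming from $\int we^{w}$). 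As for the $C^1$-uniformity in $\bxi$ on compact subsets of $\mathcal{F}_k\Omega$, it follows by inspecting each expansion above: all remainders are uniform because $Pw_i$, $H$, $G$ depend smoothly on $\bxi$ and $z(\cdot,\bxi)$ is $C^1$ in $\bxi$ by Lemma \ref{zeta}; differentiating each integral in $\bxi$ produces estimates controlled by Lemma \ref{stime-utili}, so the same manipulations go through after one derivative.
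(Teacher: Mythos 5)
Your treatment of the $C^0$ expansion is correct and follows essentially the same route as the paper: the same splitting of $\tfrac13 Q$ into $\tfrac14(\|V\|^2-\langle V,Z\rangle+\|Z\|^2)$, the same Liouville integrals for the diagonal and off-diagonal terms of $\|V\|^2$ (your values $-32\pi\log\de_i-16\pi+64\pi^2H(\xi_i,\xi_i)$ and $64\pi^2G(\xi_i,\xi_j)$ agree with the paper's \eqref{5}--\eqref{6} after multiplying by $4$), the same cancellation of the $z(\xi_i,\bxi)$ terms against $\langle V,Z\rangle$, and the same identification of $\tfrac14\|Z\|^2-\rho_2\log\into e^{W_2}$ with $\tfrac12 I_{\bxi}(z(\cdot,\bxi))$. (A pedantic remark: the additive constant actually comes out as $-8\pi k(1-\log 2)$; the missing factor $k$ is present in the paper's statement as well and is irrelevant for locating critical points.)

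The genuine gap is the $C^1$ uniformity, which you dismiss in two sentences but which is the substantive half of the proposition and of the paper's proof. The argument ``differentiating each integral in $\bxi$ produces estimates controlled by Lemma \ref{stime-utili}, so the same manipulations go through after one derivative'' does not work: Lemma \ref{stime-utili} gives $\|\nabla_{\bxi}Pw_i\|=O(\la^{-1/2})$ and $\|\nabla_{\bxi}W\|=O(\la^{-1/2})$, so differentiating an $o(1)$ remainder a priori yields only $o(\la^{-1/2})$, not $o(1)$. Concretely, the single term $-\la\into e^{W_1}$ equals $-4\pi k+o(1)$ at the $C^0$ level, but its derivative $-\la\into e^{W_1}\partial_{\xi_i^j}W_1$ involves $\partial_{\xi_i^j}W_1\approx 4Z_i^j+O(1)$ and is a priori of size $O(\de_i^{-1})=O(\la^{-1/2})$; it becomes $O(1)$ only after exploiting the oddness of $y_j/(1+|y|^2)^3$, and its finite limit is $-2\pi\,\partial_{x_j}\gamma(\xi_i,\bxi)$ with $\gamma(x,\bxi)=8\pi H(x,\xi_i)+8\pi\sum_{l\neq i}G(x,\xi_l)-\tfrac12 z(x,\bxi)$, obtained by Taylor-expanding the slowly varying factor of $e^{W_1}$ to first order around $\xi_i$ (the paper's \eqref{c1cinque}). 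The paper's proof of the $C^1$ statement is therefore a separate computation: it rewrites $\partial_{\xi_i^j}J(W)$ as $2\into E\,Z_i^j\,dx+o(1)$, expands both $\sum_h\into e^{w_h}Z_i^j$ and $2\la\into e^{W_1}Z_i^j$ to the next order so that the $O(1)$ interaction terms $8\pi(\xi_h^j-\xi_i^j)/|\xi_h-\xi_i|^2$ cancel, and then invokes Lemma \ref{grad} to recognize $\partial_{x_j}\gamma(\xi_i,\bxi)=-\tfrac{1}{4\pi}\partial_{\xi_i^j}\Lambda(\bxi)$ --- note in particular that $\partial_{\xi_j}\bigl(I_{\bxi}(z(\cdot,\bxi))\bigr)=4\pi\,\partial_x z(\xi_j,\bxi)$ is a nontrivial identity (the spatial gradient of $z$, not its parameter derivative), and is not obtained by naively differentiating the $C^0$ expansion term by term. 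None of this is supplied, or even signalled, by your proposal.
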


\begin{proof}

By the definition of $W_i$ we have:
$$ Q(W_1,W_2)= \frac 1 4 \left \{ \into |\nabla z(x,\bxi)|^2 + \sum_{i=1}^k|\nabla P w_i |^2 +\sum_{i,j=1\atop i\neq j}^k \into \nabla Pw_i\nabla Pw_j - \sum_{i=1}^k\nabla P w_i \cdot \nabla z(x,\bxi) \right \}.$$
Moreover, by Lemma \ref{stima-E},
\begin{equation} \label{1} \begin{aligned}\frac 1 4  \into |\nabla z(x,\bxi)|^2 - \rho \log \into e^{W_2}&
=\frac 1 4 \int_{\Omega} |\nabla z(x,\bxi)|^2 - \rho \log \into h(x,\bxi) e^{z(x,\bxi)} + O(\la).\end{aligned}\end{equation}
Again by Lemma \ref{stima-E}
 it is easy to check that:
\begin{equation} \label{2} \lambda \into e^{W_1}=\frac12\sum_{i=1}^k \into e^{w_i}+o(1)=\frac12\sum_{i=1}^k
\int_{\frac{\Omega-\xi_i}{\de_i}}\frac{8}{(1+|y|^2)^2} dy +o(1)= 4 \pi k+o(1) \end{equation}
where we have used  that $\int_{\R^2}\frac{1 }{(1 + |y|^2)^2} \,dy=  \pi. $
We continue with the  estimate of the terms $\into |\nabla P w_i |^2$, $\into  \nabla P w_i \cdot \nabla z(x,\bxi).$  Integrating by parts,

\begin{equation} \label{33}\begin{aligned}  \frac 1 4 \into  \nabla P w_i\cdot \nabla z(x,\bxi) &= \frac 1 4 \into  e^{w_i} z(x,\bxi)=\frac14 \into \frac{8\de_i^2}{(\de_i^2+|x-\xi|^2)^2}z(x,\bxi) dx\\ &=2 \int_{\frac{\Omega-\xi_i}{\de_i}}\frac{z(\de_i y+\xi_i,\bxi)}{(1+|y|^2)^2} dy
=2 \pi z(\xi_i,\bxi)+o(1).\end{aligned}\eeq

We now estimate the  term:

$$ \begin{aligned}\frac 1 4  \into |\nabla P w_i |^2& = \frac 1 4  \into e^{w_i} \Big( \log \frac{1}{(\delta_i^2 + |x-\xi_i|^2)^2} + 8 \pi H(x,\xi_i) + O(\la) \Big) dx\\ &=
\frac 1 4  \into e^{w_i} 8 \pi H(x,\xi_i)\, dx -\frac 1 2 \into e^{w_i}  \log (\delta_i^2 + |x-\xi_i|^2)\, dx + O(\la). \end{aligned}$$
Arguing as in \eqref{33}, we conclude that:
\begin{equation} \label{4}  \frac 1 4  \into e^{w_i} 8 \pi H(x,\xi_i) = 16 \pi^2 H(\xi_i, \xi_i)+o(1). \end{equation}
For the second term, we make the change of variables $x-\xi_i= \delta_i y$, to get:
$$ \begin{aligned}&-\frac 1 2  \into e^{w_i}  \log (\delta_i^2 + |x-\xi_i|^2)\, dx=-\frac 1 2  \into   \frac{8 \delta_i^2}{(\delta_i^2 + |x-\xi_i|^2)^2} \log (\delta_i^2 + |x-\xi_i|^2)\, dx  \\ &
=-\frac 1 2  \int_{\frac{\Omega-\xi_i}{\de_i}}  \frac{8 }{(1 + |y|^2)^2} \log (\delta_i^2(1 + |y|^2))\,dy\\ &=
-\frac 1 2 \log \delta_i^2 \int_{\frac{\Omega-\xi_i}{\de_i}}  \frac{8 }{(1 + |y|^2)^2} \,dy - \frac 1 2  \int_{\frac{\Omega-\xi_i}{\de_i}}  \frac{8 }{(1 + |y|^2)^2}  \log (1 + |y|^2) \,dy\\ &
=
- 4 \pi \log \delta_i^2 -4\pi +o(1)
\end{aligned}$$
since $\int_{\R^2}  \frac{8 }{(1 + |y|^2)^2}  \log \frac{1}{(1 + |y|^2)^2} \,dy=\pi$. Therefore, recalling the definition of $\de_i$ in \eqref{de}  we obtain \beq\label{5}\begin{aligned}&\frac 1 4  \into |\nabla P w_i |^2=16 \pi^2 H(\xi_i, \xi_i)- 4\pi \log \delta_i^2 -4\pi +o(1)\\ &=-16 \pi^2 H(\xi_i,\xi_i)-32\pi^2\sum_{j\neq i} G(\xi_i,\xi_j)+2\pi z(\xi_i,\bxi)-4\pi\log \la+8\pi\log 2-4\pi+o(1).\end{aligned}\eeq

Finally, for $i\neq j$, by \eqref{exp}, reasoning as in \eqref{33},
\beq\label{6}\begin{aligned}\frac14\into \nabla Pw_i\nabla Pw_j&=\frac14\into e^{w_i}\Big(\log \frac{1}{(\delta_j^2+|x-\xi_j|^2)^2}+8\pi H(x,\xi_j)+O(\la)\Big)\\ &=
2\pi \Big(\log \frac{1}{|\xi_i-\xi_j|^4}+8\pi H(\xi_i,\xi_j)\Big)+o(1)=16\pi^2G(\xi_i,\xi_j)+o(1)
\end{aligned}\eeq

Putting together equations \eqref{1}, \eqref{2}, \eqref{33}, \eqref{4}, \eqref{5}, we conclude with the ${\cal C}^0$ estimate.

We are going to estimate the error term in the ${\cal C}^1$ sense. By using Lemma \ref{stima-E} and \eqref{vado}
we get
\beq\label{c1uno}\begin{aligned} \partial_{\xi_i^j} J(W)&=\into \Big(-\frac23\Delta W_1-\frac13\Delta W_2-\la f(W_1)\Big)\partial_{\xi_i^j} W_1 dx\\ &\;\;\;\;+\into \Big(-\frac23\Delta W_2-\frac13\Delta W_1-\rho_2g(W_2)\Big)\partial_{\xi_i^j} W_2dx
\\ &=
4\into\Big(\frac12E+\frac14 E_0\Big)Z_i^j dx+o(1)
= 2 \into EZ_i^jdx+o(1)
\\ &=2\sum_{h=1}^k\into e^{w_h} Z_i^j dx-4\la \into e^{W_1}Z_i^j dx +o(1)
.\end{aligned}\eeq
For any $h\neq i$ we have, reasoning as in \eqref{33},
\beq\label{c1due} \into e^{w_h} Z_i^jdx=\into {8\de_h^2\over\(\de_h^2+|x-\xi_h|^2\)^2}{x_j-\xi_i^j\over \de_i^2+|x-\xi_i|^2}dx  8\pi{\xi_h^j-\xi_i^j\over |\xi_h-\xi_i|^2}+o(1),
\eeq while, for $i=j$,

\beq\label{c2tre}\begin{aligned}\into e^{w_i} Z_i^jdx&=\into {8\de_i^2\over\(\de_i^2+|x-\xi_i|^2\)^2}{x_j-\xi_i^j\over \de_i^2+|x-\xi_i|^2}dx\\ &=\frac{1}{\delta_i}\int_{\frac{\Omega-\xi_i}{\de_i}}{8\over\(1+|y|^2\)^2}{y_j\over 1+| y|^2}dx= o(1).
\end{aligned}\eeq
Let $\eta>0$ be such that $|\xi_i-\xi_j|\geq 2\eta$ and $\di(\xi_i,\partial\Omega)\geq 2\eta$.
Then, for $h\neq i$, using the change of variable $x=\delta_h y+\xi_h$,
\beq\label{c2quattro}\begin{aligned}&\int_{B(\xi_h, \eta)} 2\la e^{W_1} Z_i^j\\ &=2\la\int_{B(\xi_h, \eta)} e^{   8\pi \sum_{l=1}^kH(x,\xi_l) -{1\over2}z(x,\bxi)+O(\la) }  {x_j-\xi_i^j\over \de_i^2+|x-\xi_i|^2 }\prod_{l=1}^k{1\over\(\de_l^2+|x-\xi_l|^2\)^2}dx
\\ &=2\frac{\la}{\delta_h^2}\int_{\R^2} {e^{   8\pi \sum_{l=1}^kH(\xi_h,\xi_l)-{1\over2}z(\xi_h,\bxi)  }\over(1+|y|^2)^2}{\xi_h^j-\xi_i^j\over |\xi_h-\xi_i|^2}\prod_{l=1\atop l\neq h}^k {1\over|\xi_h-\xi_l|^4}  dx+o(1)
\\ &=2\frac{\la}{\de_h^2}\int_{\R^2}{e^{   8\pi H(\xi_h,\xi_h)+8\pi\sum_{l \neq h}G(\xi_h,\xi_l)  -{1\over2}z(\xi_h,\bxi)  }\over(1+|y|^2)^2}{\xi_h^j-\xi_i^j\over |\xi_h-\xi_i|^2}dx+o(1)
\\ &=8\pi{\xi_h^j-\xi_i^j\over |\xi_h-\xi_i|^2}+o(1)
\end{aligned}\eeq where in the last inequality we have used  the choice of $\de_h$ in \eqref{de}. Similarly
$$\begin{aligned}&\int_{B(\xi_i, \eta)} 2\la e^{W_1} Z_i^j\\ &=2\la\int_{B(\xi_i, \eta)} e^{   8\pi \sum_{l=1}^kH(x,\xi_l)+ -{1\over2}z(x,\bxi)+O(\la)}  {x_j-\xi_i^j\over \de_i^2+|x-\xi_i|^2 }\prod_{l=1}^k{1\over\(\de_l^2+|x-\xi_l|^2\)^2}dx
\\ &=2\frac{\la}{\de_i^3}\int_{B(0, \frac{\eta}{\delta_i})}{e^{   8\pi \sum_{l=1}^kH(\delta_i y+\xi_i,\xi_l)-{1\over2}z(\delta_i y+\xi_i,\bxi)+O(\la)  }\over(1+|y|^2)^2}{y_j\over (1+|y|^2)}\prod_{l=1\atop l\neq i}^k {1\over|\delta_i y+\xi_i-\xi_l|^4} dy+o(1)
\\ &=2\frac{\la}{\de_i^3}\int_{B(0, \frac{\eta}{\delta_i})}{e^{   8\pi H(\delta_i y+\xi_i,\xi_i)+8\pi\sum_{l \neq i}G(\delta_i y+\xi_i,\xi_l)    -{1\over2}z(\delta_i y+\xi_i,\bxi)+O(\la)  }\over(1+|y|^2)^2}{y_j\over (1+|y|^2)}+o(1).
\end{aligned}$$
Next we set  $\gamma(x,\bxi):=8\pi H(x,\xi_i)+8\pi\sum_{l \neq i}G(x,\xi_l)-\frac12z (x,\bxi)$. Then we use the choice of $\de_i$ in \eqref{de} and by mean value theorem we obtain
\beq\label{c1cinque}\begin{aligned}\int_{B(\xi_i, \eta)} 2\la e^{W_1} Z_i^j
&=2{\la\over \de_i^3}\int\limits_{B(0, \frac{\eta}{\delta_i})}{1 \over \(1+|y|^2\)^2}  e^{   \gamma (\de_i y+\xi_i,\xi) +O(\la) } {y_j\over 1+|y|^2}dy+o(1)
\\ &={8\over \de_i}\int\limits_{B(0,\frac{\eta}{\delta_i})}{ y_j\over \(1+|y|^2\)^3}  e^{   \gamma (\de_i y+\xi_i,\bxi)-\gamma(\xi_i,\bxi)+O(\la) }  dy+o(1)\\ &
={8\over \de_i}\int\limits_{B(0,\frac{\eta}{\delta_i})}{ y_j\over \(1+|y|^2\)^3}\( \frac{\partial\gamma}{\partial x} (\xi_i,\bxi)\cdot \de_i y+O(\la|y|^2)+O(\la)\) dy +o(1)\\ &
= 8\frac{\partial\gamma}{\partial x_j}(\xi_i,\bxi)  \int\limits_{\rr^2}{y_j^2\over \(1+|y|^2\)^3}dy+o(1)=2\pi \frac{\partial\gamma}{\partial x_j}(\xi_i,\bxi) +o(1).
\end{aligned}\eeq
On the other hand

\beq\label{c1sei}\begin{aligned}&\bigg|\int_{\Omega\setminus \cup_{h=1}^kB(\xi_h, \eta)}2\la e^{W_1} Z_i^jdx\bigg|\leq C\la\int_{\Omega\setminus \cup_{h=1}^kB(\xi_h, \eta)} e^{\sum_{l=1}^k P w_l} |Z_i^j|dx\\ &\leq C\la \int_{\Omega\setminus B(\xi_i, \eta)} e^{P w_i} |Z_i^j|
\leq C\la \int_{\Omega\setminus B(\xi_i, \eta)}  {1\over\(\de_i^2+|x-\xi_i|^2\)^2}{|x_j-\xi_i^j|\over \de_i^2+|x-\xi_i|^2}=o(1)\end{aligned}
\eeq
and the thesis follows by combining \eqref{c1uno}-\eqref{c1sei} once we have observed that, by Lemma \ref{grad},  $\frac{\partial\gamma}{\partial x_j}(\xi_i,\bxi) =-\frac{1}{4\pi}\partial_{\xi_i^j}\Lambda(\bxi)$.

\end{proof}
For $\la>0$ sufficiently small we consider the reduced functional $$\tilde J(\bxi)=J(u_1,u_2)=J(W_1+\phi_1, W_2+\phi_2)$$ where $\phi_{\bxi}=(\phi_1,\phi_2)$ has been constructed in Lemma \ref{phi}. The next proposition contains the key expansions of $\tilde J$.
\begin{prop}\label{expphi} The following holds
$$\tilde J(\bxi)=-4 \pi k\log \lambda +\Lambda(\bxi)
-8\pi (1-\log 2) + o(1),$$
 ${\cal C}^1$ uniformly with respect to $\xi$ in compact sets of ${\cal F}_k(\Omega)$, where $ \Lambda(\bbm[\xi])$ has been defined in \eqref{Lambda}.

\end{prop}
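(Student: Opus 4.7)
In view of Proposition~\ref{asymp}, it is enough to prove that $\tilde J(\bxi)-J(W_1,W_2)=o(1)$ in the $C^1$ sense uniformly on compact subsets of $\mathcal F_k\Omega$. My plan is to apply Taylor's formula along the path $t\mapsto J(W+t\phi)$, which gives
$$\tilde J(\bxi)-J(W)=J'(W)[\phi]+\int_0^1(1-t)\,J''(W+t\phi)[\phi,\phi]\,dt.$$
A direct computation based on $-\Delta Pw_i=e^{w_i}$ and the definitions of $W_1,W_2$ identifies the two $L^2$-gradient components of $J$ at $W$ as $\tfrac12 E$ and $-\tfrac12 E_0$ respectively; hence $J'(W)[\phi]=\tfrac12\into E\phi_1\,dx-\tfrac12\into E_0\phi_2\,dx$, which by Lemma~\ref{stima-E} and $\|\phi\|=O(\la^{1/2-\eps})$ from Proposition~\ref{phi} is $o(1)$. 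The quadratic term $J''(W+t\phi)[\phi,\phi]$ splits into a gradient part controlled by $C\|\phi\|^2$ and nonlinear pieces involving $\la f'(W_1+t\phi_1)$ and $\rho_2 g'(W_2+t\phi_2)$; all of them are $o(1)$ by H\"older combined with Lemma~\ref{aux} and Moser-Trudinger. This gives the $C^0$ estimate.

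For the $C^1$ part I would differentiate:
$$\partial_{\xi_i^\ell}[\tilde J-J(W)]=\bigl[J'(W+\phi)-J'(W)\bigr][\partial_{\xi_i^\ell}W]+J'(W+\phi)[\partial_{\xi_i^\ell}\phi].$$
From \eqref{hem1}-\eqref{hem2} one has $J'(W+\phi)[\eta]=-\sum_{\ell,k} c_{\ell k}\into Z_\ell^k e^{w_\ell}(\tfrac23\eta_1+\tfrac13\eta_2)\,dx$; combining $|c_{\ell k}|=O(\la)$ from \eqref{ciij} with the estimates \eqref{remhem}-\eqref{remhem1} used in the proof of Lemma~\ref{relation} shows that the second summand is $o(1)$. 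The first summand equals $\int_0^1 J''(W+t\phi)[\phi,\partial_{\xi_i^\ell}W]\,dt$; appealing to \eqref{vado}, I would decompose $\partial_{\xi_i^\ell}W=(4PZ_i^\ell,-2PZ_i^\ell)+R$ with $\|R\|_{H^1}=O(1)$, so that the $R$-contribution is controlled by H\"older and Lemma~\ref{aux}.

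The main obstacle is the leading piece $J''(W+t\phi)[\phi,(4PZ_i^\ell,-2PZ_i^\ell)]$, where a crude bound only yields $O(\la^{-\eps})$. Two orthogonality facts save the day. First, $\phi_1\in K^\perp$ gives $\into\nabla\phi_1\nabla PZ_i^\ell\,dx=0$, which kills the linear gradient cross-terms and reduces the expression to $-4\la\into f'(W_1+t\phi_1)\phi_1 PZ_i^\ell\,dx+2\rho_2\into g'(W_2+t\phi_2)[\phi_2]PZ_i^\ell\,dx$. Second, the identity $-\Delta PZ_i^\ell=e^{w_i}Z_i^\ell$ combined with $\phi_1\in K^\perp$ yields the crucial cancellation
$$\into e^{w_i}Z_i^\ell\phi_1\,dx=0.$$
Writing $2\la e^{W_1}=\sum_h e^{w_h}-E$ and $PZ_i^\ell=Z_i^\ell+O(1)$ via \eqref{pzi}, the leading diagonal piece vanishes by the above identity, whereas the contributions of $e^{w_h}$ for $h\neq i$ (where $PZ_i^\ell$ is bounded), of the error $E$ by Lemma~\ref{stima-E}, and of the factor $e^{t\phi_1}-1$ by Lemma~\ref{aux} all yield a bound $O(\la^{\alpha})$ with $\alpha>0$. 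The $g'$ term is handled similarly using Lemma~\ref{aux}(b). Putting everything together with Proposition~\ref{asymp} concludes the proof.
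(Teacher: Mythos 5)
Your argument is correct and follows essentially the same route as the paper's proof: expand $J$ around $W$ (the paper's \eqref{czero} is exactly your $J'(W)[\phi]=\tfrac12\into E\phi_1-\tfrac12\into E_0\phi_2$ plus quadratic remainders), use Lemma \ref{stima-E} and the bounds on $\phi$, $\partial_{\bxi}\phi$ from Proposition \ref{phi}, dispose of the $\partial_{\bxi}\phi$ terms via \eqref{hem1}--\eqref{hem2}, \eqref{ciij} and \eqref{remhem}--\eqref{remhem1}, and handle the leading piece with the orthogonality $\into\nabla\phi_1\nabla PZ_i^\ell\,dx=\into e^{w_i}Z_i^\ell\phi_1\,dx=0$, exactly as in \eqref{vado5}--\eqref{vado9}. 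One small attribution slip: the $\nabla\phi_2\cdot\nabla PZ_i^\ell$ cross-terms are not killed by any orthogonality (none is imposed on $\phi_2$) but by the cancellation $\tfrac13\nabla\phi_2\cdot\nabla(2\eta_2+\eta_1)=0$ for $\eta=(4PZ_i^\ell,-2PZ_i^\ell)$ --- equivalently, $-\tfrac13\partial_{\xi}W_1-\tfrac23\partial_{\xi}W_2$ contains no $PZ_i^\ell$, which is how the paper argues in \eqref{vado9} --- so the reduced expression you arrive at is nonetheless the right one.
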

\begin{proof}
We compute
\beq\label{czero}\begin{aligned}\tilde J(\bxi)&= J(W_1, W_2)-\frac12\bigg(\into \Big(\sum_{i=1}^k\Delta Pw_i+2\la e^{W_1}\Big) \phi_1+\into \Delta z(x,\bxi) \phi_2\bigg)
\\ &\;\;\;\;-\la \into \big(e^{W_1+\phi_1}-e^{W_1}-e^{W_1}\phi_1\big)-\rho_2\log\into e^{W_2+\phi_2}+\rho \log \into e^{W_2}+o(1).
\end{aligned}\eeq
Next by Lemma \ref{stima-E} we estimate:
$$\into \Big(\sum_{i=1}^k\Delta Pw_i+2\la e^{W_1}\Big) \phi_1=-\into E_1\phi_1= o(1).$$
Moreover,  for some $\theta_1\in (0,1) $, using Lemma \ref{aux}, we have
$$\la\into \big(e^{W_1+\phi_1}-e^{W_1}-e^{W_1}\phi_1\big)=\la\into e^{W_1+\theta_1\phi_1} \phi_1^2=O(\la^{\frac{1-pq}{pq}+1-2\e})=o(1)
$$ provided that $p,\,q$ are sufficiently close to 1.
Furthermore, for some $\theta_2\in (0,1)$, using also \eqref{l1}, we get
$$\begin{aligned}\log\into e^{W_2+\phi_2}-\log \into e^{W_2}=\frac{1}{\into e^{W_2+\theta_2\phi_2}}\into e^{W_2+\theta_2\phi_2}\phi_2=O(\|\phi_2\|)=o(1).\end{aligned}$$ Finally we immediately obtain $$\into \Delta z(x,\bxi) \phi_2=O\Big(\into |\phi_2|\Big)=o(1).$$
By inserting the above estimates into \eqref{czero} Proposition \ref{asymp} gives the ${\cal C}^0$ estimate.

 In order to prove that the expansion actually holds in the ${\cal C}^1$ sense,
we compute
$$\begin{aligned}\partial_{\xi_i^j} \tilde J(\bxi)&=
\partial_{\xi_i^j}J(W_1,W_2)+\into \Big(-\frac23 \Delta\phi_1-\frac13\Delta\phi_2-\la (f(W_1+\phi_1)-f(W_1))\Big)\partial_{\xi_i^j} W_1\\ &
\;\;\;\;+\into \Big(-\frac23\Delta \phi_2-\frac13\Delta \phi_1-\rho_2(g(W_2+\phi_2)-g(W_2))\Big)\partial_{\xi_i^j} W_2dx\\ &\;\;\;\;
+\into \Big(-\frac23\Delta u_1-\frac13\Delta u_2-\la f(u_1)\Big)\partial_{\xi_i^j}\phi_1dx\\ &\;\;\;\;+\into \Big(-\frac23\Delta u_2-\frac13\Delta u_1-\rho_2g(u_2)\Big)\partial_{\xi_i^j} \phi_2dx\end{aligned}$$
Then, by \eqref{ciij}, \eqref{hem1}-\eqref{hem2}   and \eqref{remhem}-\eqref{remhem1} we deduce \beq\label{vado4}\begin{aligned} \partial_{\xi_i^j}\tilde J(\bxi)&=
\partial_{\xi_i^j} J(W_1,W_2)+\into \Big(-\frac23 \Delta\phi_1-\frac13\Delta\phi_2-\la (f(W_1+\phi_1)-f(W_1))\Big)\partial_{\xi_i^j} W_1\\ &
\;\;\;\;+\into \Big(-\frac23\Delta \phi_2-\frac13\Delta \phi_1-\rho_2(g(W_2+\phi_2)-g(W_2))\Big)\partial_{\xi_i^j} W_2dx+o(1)
.\end{aligned}\eeq

By Lemma \ref{aux} and \eqref{der2} for some $\theta_1\in (0,1)$ we get\beq\label{vado5}\begin{aligned}\la \into\big(f(W_1+\phi_1)-f(W_1)-f'(W_1)\phi_1)\big)\partial_{\xi_i^j} W_1&= O\Big(\la^{-\frac12}\| \la f''(W_1+\theta_1\phi_1)\phi_1^2\|_p\Big)\\ &=
 O(\la^{\frac{1-pq}{pq}+1-2\e-\frac12})=o(1)\end{aligned}\eeq provided that $p$  is sufficiently close to 1.
 Since $\into e^{w_i}\phi_1Z_i^j=-\into\nabla \phi_1\nabla PZ_i^j=0$, then   $$\into e^{w_i}\phi_1PZ_i^j =\into e^{w_i}\phi_1Z_i^j+o(1)=o(1),$$ while, for $h\neq i$, by \eqref{cii},
 $$\into e^{w_h}\phi_1 PZ_i^j=O(\la^{\frac{1-p}{p}+\frac12-\e})=o(1)$$ provided that $p$ is sufficiently close to 1,
   by which, using Lemma \ref{stima-E}, \eqref{der3} and  \eqref{vado},
\beq\label{vado6}\la\into f'(W_1)\phi_1\partial_{\xi_i^j}W_1=\frac12\sum_{h=1}^k\into e^{w_h}\phi_1 PZ_i^j +O(\la^{\frac{2-p}{2p}-\e})=o(1).
\eeq
Next we choose $p,q>1$ such that $\frac1p+\frac1q=1$ and $1<q<2$. Then, according to \eqref{vado},  $\|\partial_{\xi_i^j} W_2\|_q=O(1)$. Consequently,  again by Lemma \ref{aux}, for some $\theta_2\in (0,1)$,
\beq\label{vado7}\into(g(W_2+\phi_2)-g(W_2))\partial_{\xi_i^j}W_2dx=O(\| g'(W_2+\theta_2\phi_2)\phi_2\|_p)=O(\la^{\frac12-\e})=o(1).
\eeq
We observe that \beq\label{ww}\partial_{\xi_i^j} W_1=4 P Z_i^j+O(1),\qquad \partial_{\xi_i^j} W_2=-2 P Z_i^j+O(1),\eeq ${\cal C}^1$-uniformly for $x\in\overline\Omega$ and $\bxi$ on compact sets of ${\cal F}_k(\Omega)$.
Moreover, by \eqref{ww}, and recalling that $ \into\nabla \phi_1\nabla PZ_i^j=0$,
\beq\label{vado8} \into \Delta\phi_1\partial_{\xi_i^j} W_1=-\into  \nabla\phi_1\nabla\partial_{\xi_i^j}W_1=-\into\nabla \phi_1(\nabla PZ_i^j+O(1))=O\Big(\into |\nabla \phi_1|\Big)=o(1).\eeq
Similarly \beq\label{vado81} \into \Delta\phi_1\partial_{\xi_i^j} W_2=o(1).\eeq
Finally we have $-\frac13\partial_{\xi_i^j} W_1-\frac23 \partial_{\xi_i^j} W_2=-\frac12z(x,\bxi),$ by which
\beq\label{vado9} \into \Delta\phi_2\Big(-\frac13\partial_{\xi_i^j} W_1-\frac23\partial_{\xi_i^j} W_2\Big)=\frac12\into\nabla\phi_2\nabla z(x,\bxi)=O\Big(\into |\nabla \phi_2|\Big)=o(1). \eeq
By inserting \eqref{vado5}-\eqref{vado9} into \eqref{vado4}, the thesis follows by Proposition \ref{asymp}. \end{proof}

\noindent{\bf Proof of Theorem \ref{teo} completed}. Let ${\cal K}\subset {\cal F}_k(\Omega)$ be a $C^1$-stable set of critical points of $\Lambda$. Then, according to Proposition \ref{expphi}, for $\la>0$ sufficiently small  let $\bxi_\la$ be a critical point of $\tilde J$ such that $\di(\bxi,{\cal K})\to 0$.  By Lemma \ref{relation} $u_\la=W+\phi_{\bxi_\la}$ solves problem \eqref{s}. Consequently $u_\la$ provides a solution to the original problem \eqref{eq:e-1} with  $$\rho_1=\rho_{1,\la}=\la\into e^{u_1}.$$ Then, by \eqref{2} and Lemma \ref{aux}
$$\rho_{1,\la}=\la \into e^{W_1+\phi_1}=\la\into e^{W_1}+o(1)=4k\pi +o(1).$$
\bigskip
\medskip

\small{\noindent{\bf Acknowledgments.}  The authors are grateful to D. Bartolucci and P. Esposito for some useful discussions.
}

\end{document}